\documentclass[a4paper,12pt,reqno]{amsart}
\usepackage{comment} 
\usepackage[utf8]{inputenc} 
\usepackage{fullpage}
\usepackage{graphicx}
\usepackage{hyperref}
\usepackage[lite]{amsrefs}
\usepackage{titlesec}
\usepackage{amsmath,amssymb,amsthm}
\usepackage{tikz}
\usetikzlibrary{decorations.markings}
\usetikzlibrary{decorations}

\newtheorem{theorem}{Theorem}[section]

\newtheorem{proposition}[theorem]{Proposition}
\newtheorem{lemma}[theorem]{Lemma}

\newtheorem{corollary}[theorem]{Corollary}

\theoremstyle{definition}

\newtheorem{remark}[theorem]{Remark}
\newtheorem{definition}[theorem]{Definition}

\numberwithin{equation}{section}

\makeatletter
\def\@secnumfont{\bfseries}
\def\section{\@startsection{section}{1}%
	\z@{.7\linespacing\@plus\linespacing}{.5\linespacing}%
	{\normalfont\fontsize{12}{17}\sffamily\bfseries}}
\def\subsection{\@startsection{subsection}{2}%
	\z@{.5\linespacing\@plus.7\linespacing}{-.5em}%
	{\normalfont\fontsize{10}{14}\sffamily\bfseries}}
\makeatother

\titleformat{\section}
{\normalfont\fontsize{12}{17}\sffamily\bfseries}
{\thesection}
{1em}
{}

\titleformat{\subsection}
{\normalfont\fontsize{10}{14}\sffamily\bfseries}
{\thesubsection}
{1em}
{}

\titleformat{\subsubsection}
{\normalfont\fontsize{10}{14}\sffamily}
{\thesubsubsection}
{1em}
{}

\newcommand{\norm}[1]{\lVert #1 \rVert}
\newcommand{\abs}[1]{| #1 |}
\newcommand{\scal}[1]{\left\langle  #1 \right\rangle}

\newcommand{\Rr}{\mathbb{R}}

\newcommand{\Ss}{\mathbb{S}}
\newcommand{\Cc}{\mathbb{C}}
\newcommand{\Hh}{\mathbb{H}}
\newcommand{\Zz}{\mathbb{Z}}

\newcommand{\SU}{\mathrm{SU}}
\renewcommand{\Re}{\mathop{\mathrm{Re}}}
\renewcommand{\Im}{\mathop{\mathrm{Im}}}

\newcommand{\la}{\lambda}
\newcommand{\x}{\mathbf{x}}
\newcommand{\n}{\mathbf{n}}

\newcommand{\hfrak}{\mathfrak{h}}
\newcommand{\vfrak}{\mathfrak{v}}
\newcommand{\inv}{^{-1}}
\newcommand{\tr}{\mathrm{tr}}
\newcommand{\ubar}[1]{\underline{#1}}

\renewcommand{\and}{\quad \text{and} \quad}

\renewcommand{\matrix}[4]{
	\begin{pmatrix}
		#1 & #2 \\ #3 & #4
	\end{pmatrix}
}
\renewcommand{\vector}[2]{
	\begin{pmatrix}
		#1 \\ #2 
	\end{pmatrix}
}

\title[Constant curvature rotational nets and periodic B\"acklund transforms]
{Constant curvature rotational nets and periodic B\"acklund transforms}

\author[]{T. Raujouan}
\author[]{W. Rossman}
\author[]{N. Suda}

\begin{document}
	
\begin{abstract}
	After giving explicit parametrizations of 
	discrete constant negative Gaussian curvature surfaces (negative CGC, i.e. 
	discrete pseudospherical surfaces) of revolution,
	we construct B\"acklund transformations that again will have explicit parametrizations and 
	are new examples of non-rotational discrete pseudospherical surfaces. 
  In the process of doing this, for discrete CGC circular nets, 
  we can provide rotationally invariant families of flat connections and give conditions on them
  so that the B\"acklund transformations preserve periodicity, that is, have annular topology.
\end{abstract}

\maketitle

\section*{Introduction}
Bobenko and Pinkall \cite{BP} investigated Lax pairs
corresponding to the discrete constant 
negative Gaussian curvature surfaces introduced by Wunderlich \cite{WW} and 
Sauer \cite{SR}, establishing a framework based on the 
discretization of asymptotic coordinates. 
Furthermore, Bobenko and Pinkall \cite{BP3}, as well as Pedit and Wu \cite{PW}, investigated 
Lax pairs for discrete
constant positive Gaussian curvature surfaces and their parallel surfaces, 
which includes discrete constant mean curvature surfaces. 
In addition, Bobenko and Pinkall \cite{BP2} also addressed isothermic nets as a broader class which contains discrete constant mean curvature surfaces, 
where the isothermic nets — discretizations of conformal curvature line coordinates — are defined in terms of cross-ratios.
Bobenko, Pottmann, and Wallner \cite{BPW} demonstrated that 
for circular nets, which constitute a discretization of curvature line coordinates, 
the principal curvatures, Gaussian curvature, and mean curvature can be defined via discrete analog of the Steiner formula.
Subsequently, Hoffmann and Sageman-Furnas \cite{HS} constructed a Lax pair for circular nets of constant negative Gaussian curvature (hereafter abbreviated as cK-nets),
building on the Lax pair introduced in \cite{BP}.
Furthermore, they computed the Gaussian curvature of the associated family arising from the Lax pair with the spectral parameter $\la=e^t$, 
employing edge-constraint nets and their curvature as introduced by Hoffmann, Sageman-Furnas, and Wardetzky \cite{HSW}.
Hoffmann and Sageman-Furnas \cite{HS} elucidated the relationship between asymptotic coordinates and curvature line coordinates for discrete surfaces, and further revisited the Bäcklund transformation for cK-nets introduced by Schief \cite{S} 
as gauge transformations of Lax pairs, and the Bäcklund transformation of \cite{S} is interpreted in \cite{HS} as a double Bäcklund transformation. 
Moreover, explicit parametrizations of the Kuen surface and the breather surface as circular nets are presented in \cite{HS}.

On the other hand, the third author \cite{SN} provided parametrizations of rotationally symmetric circular nets (hereafter abbreviated as rc-nets) with constant Gaussian curvature
derived from the definition of the Gaussian curvature, using the Steiner formula. 
In the case of positive constant Gaussian curvature, this is highly related to work of Bobenko and Hoffmann, because such surfaces are parallel to discrete constant mean curvature 
surfaces of revolution as studied in \cite{BH}, where they employed a unified approach which includes the notion of s-isothermic found in \cite{BP3,BHS}.
These parametrizations of constant Gaussian curvature rc-nets have made the following questions more accessible:
\begin{itemize}
	\item For smooth surfaces, it is well known that obtaining a surface of revolution 
	with constant Gaussian curvature leads to a second-order linear differential equation 
	whose solutions are written by trigonometric, hyperbolic or exponential functions. 
	Does an analogous difference equation arise in the case of circular nets?
	\item How is the rotational symmetry reflected in the Lax pair associated with rc-nets of constant Gaussian curvature?
\end{itemize}
Periodicity has been well studied in this context.
Periodic breather surfaces can be obtained from 
the straight line by applying a double Bäcklund transformation \cite{BL2, RS}. 
In the case of constant mean curvature surfaces, periodic examples such 
as bubbletons and Delaunay bubbletons have been extensively studied 
through Darboux transformations \cite{D, BL,BL3, SW, CLO}. 
In addition, doubly periodic tori with constant curvature have also been 
investigated from the viewpoint of integrable systems \cite{SJ, WR, WH, AU, PS, B1,B2, CS, MS, UT}. 
Thus, whether or not a surface exhibits periodicity constitutes an 
important theme in the study of surfaces with integrable systems. 
This leads us to the following query:
\begin{itemize}
	\item For rc-nets with constant negative Gaussian curvature, under what conditions do the cK-nets 
	obtained via the Bäcklund transformation defined in \cite{HS} preserve periodicity, and thus remain annular?
\end{itemize}
This paper concentrates mainly on the above three questions.

More specifically, in Section 1 we reconsider the
Lax pair for circular nets of constant Gaussian curvature, 
interpreting it as a flat connection within the framework of 
discrete gauge theory \cite{B, BCHPR, BHRS, NORYPJ}. In Section 2, we first demonstrate that, 
for rc-nets with constant Gaussian curvature, trigonometric, hyperbolic, and exponential 
functions arise as solutions to the associated difference equations obtained from the definition of curvature. 
As one consequence, we clarify how the singular vertices defined by Yasumoto and the second author \cite{RY} appear in circular nets of constant 
Gaussian curvature while employing Jacobi elliptic functions. In Section 3, 
for rc-nets, we present several results on flat connections that are invariant in one of the two lattice directions, which leads to their explicit description. 
In Section 4, after 
showing that the transformation of a certain related unitary function 
can be expressed in terms of a rational difference equation, we establish results on the periodicity 
of rc-nets with constant negative Gaussian curvature and their single and double Bäcklund transformations. 
Finally, we also discuss a method of linearization for the rational difference equations associated with 
rc-nets.

\begin{figure}[htbp]
	\setlength{\tabcolsep}{20pt}
	\begin{tabular}{cc}
	\includegraphics[width=5.0cm, trim=4.5cm 1cm 4.5cm 1cm, clip]{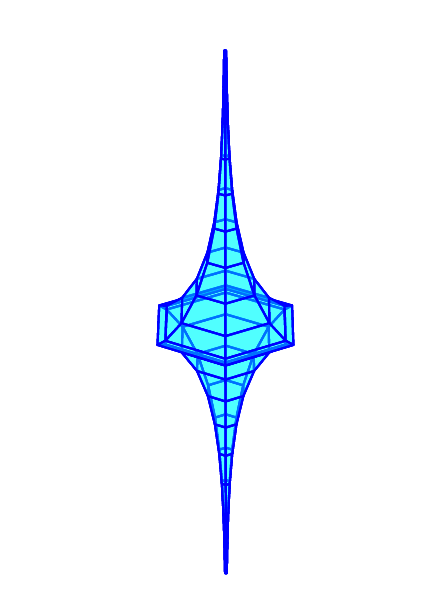}&
	\includegraphics[width=5.0cm, trim=4.5cm 1cm 4.5cm 1cm, clip]{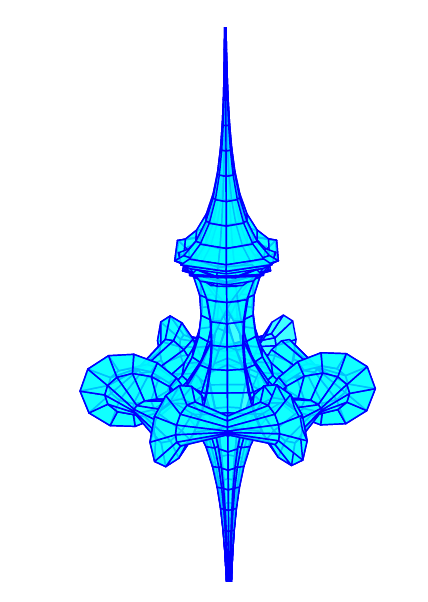}
	\end{tabular}
	\caption{Examples of periodic cK-nets (which are seen again in Figures \ref{figure:singular} and \ref{figure:backlund}).}
\end{figure}

\noindent\textbf{Acknowledgements.} The authors thank Alexander I. Bobenko, Francis Burstall, 
Joseph Cho, Masaya Hara, Christian M\"uller, Andrew Sageman-Furnas and Masashi Yasumoto for 
helpful advice and discussions. 
The authors gratefully acknowledge, respectively, support from
1) JSPS under the FY2022 JSPS Postdoctoral Fellowship P22766, 
2) the Japanese government Kiban C grant 23K03091,
3) JST SPRING, Grant Number JPMJSP2148.

\section{Surfaces from integrable systems}

\subsection{Quaternionic model of $\Rr^3$}

We denote by $\Hh$ the field of quaternions, which we realize as the real 4-dimensional vector space generated by the matrices $(\boldsymbol{\sigma}_0, -i\boldsymbol{\sigma}_1, -i\boldsymbol{\sigma}_2, -i\boldsymbol{\sigma}_3)$ where
\begin{equation}
	\boldsymbol{\sigma}_0 = \matrix{1}{0}{0}{1},\quad \boldsymbol{\sigma}_1 = \matrix{0}{1}{1}{0},\quad \boldsymbol{\sigma}_2 = \matrix{0}{-i}{i}{0},\quad \boldsymbol{\sigma}_3 = \matrix{1}{0}{0}{-1}.
\end{equation}
We identify the Euclidean space $\Rr^3$ with $\Im \Hh$ endowed with the norm $\norm{x}^2 = \det x$ and the orientation that agrees with $x \times y = \frac{1}{2}[x,y]$. Note that for all $x,y\in\Im \Hh$,
\begin{equation}
	x\cdot y = \frac{-\tr(xy)}{2},\quad x\times y = (xy)^{\tr=0}.
\end{equation}
where $(x)^{\tr =0} := x - \frac{\tr x}{2}\boldsymbol{\sigma}_0$ for $x\in \Hh$ is the projection onto $\Im \Hh$.

\subsection{Discrete gauge theory}

Let $\Gamma = (V,E, F)$ be a quad-graph with vertices $V$, oriented edges $E$, and quadrilateral faces $F$. 
We consider the set of all oriented edges $E$ as the union of a set $E^+$ of oriented edges in a chosen direction and 
the set $E^{-}$ of oppositely oriented edges.

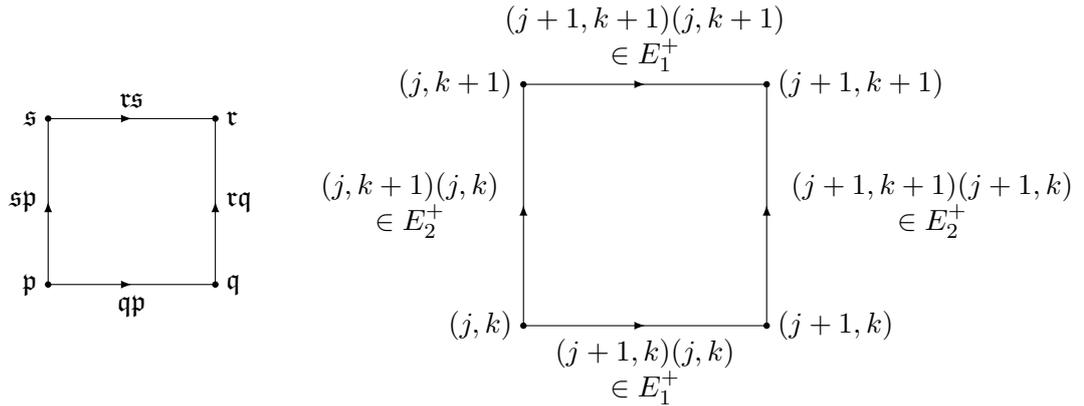
\begin{figure}[htbp]
  \centering
  \begin{minipage}{0.5\textwidth}
    \centering
		\hspace*{-9.0cm}
    \begin{tikzpicture}[
      x=1.1cm, y=1.1cm,
      every node/.style={font=\small},
      decoration={markings, mark=at position 0.5 with {\arrow{latex}}}
      ]
      \draw[black,postaction={decorate}] (0,0) -- (2,0)
        node[midway, below] {$\mathfrak{qp}$};
      \draw[black,postaction={decorate}] (2,0) -- (2,2)
        node[midway, right] {$\mathfrak{rq}$};
      \draw[black,postaction={decorate}] (0,0) -- (0,2)
        node[midway, left] {$\mathfrak{sp}$};
      \draw[black,postaction={decorate}] (0,2) -- (2,2)
        node[midway, above] {$\mathfrak{rs}$};
      \draw[fill] (0,0) node[left]  {$\mathfrak{p}$} circle[radius=1pt];
      \draw[fill] (2,0) node[right] {$\mathfrak{q}$} circle[radius=1pt];
      \draw[fill] (2,2) node[right] {$\mathfrak{r}$} circle[radius=1pt];
      \draw[fill] (0,2) node[left]  {$\mathfrak{s}$} circle[radius=1pt];
    \end{tikzpicture}
  \end{minipage}
	 \hspace*{-6.5cm}
  \begin{minipage}{0.4\textwidth}
    \centering
    \begin{tikzpicture}[
      x=1.6cm, y=1.6cm,
      every node/.style={font=\small,align=center}, 
      decoration={markings, mark=at position 0.5 with {\arrow{latex}}}
      ]
      \draw[black,postaction={decorate}] (0,0) -- (2,0)
        node[midway, below]
          {$\begin{array}{c}(j+1,k)(j,k)\\ \in E_1^+\end{array}$};
      \draw[black,postaction={decorate}] (2,0) -- (2,2)
        node[midway, right]
          {$\begin{array}{c}(j+1,k+1)(j+1,k)\\ \in E_2^+\end{array}$};
      \draw[black,postaction={decorate}] (0,0) -- (0,2)
        node[midway, left]
          {$\begin{array}{c}(j,k+1)(j,k)\\ \in E_2^+\end{array}$};
      \draw[black,postaction={decorate}] (0,2) -- (2,2)
        node[midway, above]
          {$\begin{array}{c}(j+1,k+1)(j,k+1)\\ \in E_1^+\end{array}$};
      \draw[fill] (0,0) node[left]  {$(j,k)$} circle[radius=1pt];
      \draw[fill] (2,0) node[right] {$(j+1,k)$} circle[radius=1pt];
      \draw[fill] (2,2) node[right] {$(j+1,k+1)$} circle[radius=1pt];
      \draw[fill] (0,2) node[left]  {$(j,k+1)$} circle[radius=1pt];
    \end{tikzpicture}
  \end{minipage}
  \caption{Left: vertices and oriented edges of a quadrilateral, 
    right: identification with pairs of integers in the domain.}
  \label{fig:two-diagrams}
\end{figure}

We define the discrete bundle $\underline{\Hh} := V \times \Hh$,
and now we review discrete gauge theory and define flat connections, see for example \cite{B, BCHPR, BHRS, NORYPJ}.

\begin{definition}
	A \textbf{section} of $\underline{\Hh}$ is a map $\sigma\colon V \to \Hh$.
	A section $\Phi$ of $\ubar{\Hh}$ is a \textbf{frame} if it is invertible: $\Phi\colon V \to \Hh^{\times}(=\Hh \setminus \{0\})$.
	A \textbf{discrete connection} on $\underline{\Hh}$ is a map $\eta\colon E\to \Hh^{\times}$ such that $\eta_{\mathfrak{ip}} = \eta_{\mathfrak{pi}}\inv$ for all $\mathfrak{pi}\in E$.
	Any frame $\Phi$ induces a connection $\eta$ via the formula
	\begin{equation}\label{eq:induced-connection}
		\eta_{\mathfrak{ip}} = \Phi(\mathfrak{i})\Phi(\mathfrak{p})\inv\quad \forall \mathfrak{ip}\in E.
	\end{equation}
	A section $\sigma$ is \textbf{parallel} for $\eta$ if $\sigma_{\mathfrak{i}} = \eta_{\mathfrak{ip}} \sigma_{\mathfrak{p}}$ for all edges $\mathfrak{ip}$. 
	A \textbf{parallel frame} for $\eta$ is a frame $\Phi\colon V\to \Hh^{\times}$ which is parallel for $\eta$,
	and any frame is parallel for its induced connection.
	A discrete connection $\eta$ is \textbf{flat} if 
	\begin{equation}
		\eta_{\mathfrak{rq}}\eta_{\mathfrak{qp}} = \eta_{\mathfrak{rs}}\eta_{\mathfrak{sp}}
	\end{equation}
	 on any face $\mathfrak{pqrs}\in F$.
\end{definition}

It is immediately seen that the following statements are equivalent:
\begin{enumerate}
		\item $\eta$ admits a parallel frame.
		\item $\eta$ is flat.
\end{enumerate}

\begin{definition}
	The collection of frames $G \colon V \to \Hh^{\times}$ forms a group under point-wise multiplication.
	This group is called the \textbf{gauge group} of $\ubar{\Hh}$. 
	It acts on sections $\sigma$ via left multiplication and acts on connections via
	\begin{equation}\label{eq:gauged-connection-formula}
		(G\cdot\eta)_{\mathfrak{ip}} := G_{\mathfrak{i}} \eta_{\mathfrak{ip}} G_{\mathfrak{p}}\inv.
	\end{equation}
	for all edges $\mathfrak{ip} \in E$.
	\end{definition}

  Note that $G\sigma$ is parallel for $G\cdot\eta$ if and only if $\sigma$ is parallel for $\eta$.
	A trivializing gauge for a given connection $\eta$ is a gauge $G$ such that $G\cdot\eta \equiv 1$. 
	Also, if $G$ is a trivializing gauge for $\eta$, 
	then, for any constant $\Phi_{0}$, $\Phi:=G\inv \Phi_{0}$ 
	is a parallel frame for $\eta$, 
  and hence $\eta$ is flat. 
	Conversely if $\eta$ is flat, the inverse of a parallel frame is a trivializing gauge.

We now restrict the choices for $V$ so that we can 
identify each vertex with two integers via
$$\mathfrak{p}\in V \leftrightarrow (j,k) \in \mathcal{D} \subset \Zz^2$$
where $\mathcal{D}$ is a domain of $\Zz^2$, see Figure \ref{fig:two-diagrams}.
That is, each face $\mathfrak{pqrs}$ is identified as
$$\mathfrak{p} \leftrightarrow (j,k),\;\;\mathfrak{q} \leftrightarrow (j+1,k),\;\;\mathfrak{r} \leftrightarrow (j+1,k+1),\;\; \mathfrak{s} \leftrightarrow (j,k+1).$$
Moreover, we can decompose $E=E_1^+ \cup E_1^- \cup E_2^+ \cup E_2^-$ with
$$E_1^+=\{(j+1,k)(j,k)\in E\},\;\;E_1^-=\{(j-1,k)(j,k)\in E\},$$
$$E_2^+=\{(j,k+1)(j,k)\in E\},\;\;E_2^-=\{(j,k-1)(j,k)\in E\}.$$

\begin{definition}
	Let $\eta\colon E \to \Hh^{\times}$ be a connection. 
	The \textbf{Lax pair} $(L,M)$ is defined on $E_1^{\pm}$ (for $L$) and
  on $E_2^{\pm}$ (for $M$) by 
	\begin{equation}
		L_{e_1} := \eta_{e_1},\;\; M_{e_2} := \eta_{e_2}.
	\end{equation} 
	 Then the flatness of $\eta$ is rewritten by 
		\begin{equation} \label{eq:compa}
			M_{\mathfrak{rq}} L_{\mathfrak{qp}} = L_{\mathfrak{rs}}M_{\mathfrak{sp}},
		\end{equation}
	  which is called the \textbf{compatibility condition}.
\end{definition}

\subsection{Circular nets and their flat connections}

\subsubsection{Sym-type formulas}
For $\Ss^2$ the unit sphere centered at the origin in $\Rr^3$, we call a pair of maps $(\x, \n) : V \to \Rr^3 \times \Ss^2$ a \textbf{contact element net}.
Let $\xi\in\Rr^{\times}$ and $\tau\in\Rr$. For any $t$-differentiable family $\Phi=(\Phi_t)_{t\in \Rr}$ of frames on $\ubar{\Hh}$, consider the \textbf{Sym-type} formulas (which we will abbreviate to ``Sym formula")
\begin{equation}\label{eq:sym-bob}
	\x := \xi\left[\Phi\inv \partial_t\Phi\right]^{\tr = 0} + \tau \n ,\quad \n := \Phi\inv(-i\boldsymbol{\sigma}_3)\Phi.
\end{equation}
Then for each $t$, $(\x,\n)$ is a contact element net.

\subsubsection{Ridid motions}
For contact element nets,   
a transformation $(\x,\n) \mapsto (\tilde{\x},\tilde{\n})$ defined by
$$\tilde{\x} = R\inv \x R + T,\;\; \tilde{\n} = R\inv \n R$$
for constants $R, T \in \Hh$ is a \textbf{rigid motion}.
Let $\Phi\colon V\to \Hh^{\times}$ be a frame with connection $\eta$ and let $H(t)\in\Hh^{\times}$ for all $t\in \Rr$ ($H$ is constant with respect to $V$). 
Then $\tilde{\Phi} := \Phi H$ is another frame with the same associated connection $\eta$. Let $(\x, \n)$ be the contact element net induced by $\Phi$ via the Sym formula \eqref{eq:sym-bob}. 
Then the new frame $\tilde{\Phi}$ also induces a contact element net $(\tilde{\x}, \tilde{\n})$ given by
\begin{equation} \label{eq:isom}
	\tilde{\x} = H\inv \x H + \xi \left[H\inv \partial_tH\right]^{\tr = 0},\;\; \tilde{\n} = H\inv \n H.
\end{equation}
Therefore, changing the initial condition chosen for the parallel frame $\Phi$ induces 
a rigid motion for $(\x,\n)$. 
\subsubsection{Gauging}

\begin{definition}
	A gauge $G$ is \textbf{admissible} if there exists a family $x_t\colon V \to \Rr$ and a section $y\colon V \to \Rr$ such that
	\begin{equation}
		G = e^{x_t}\matrix{e^{i y}}{0}{0}{e^{-iy}} = \exp(x_t\boldsymbol{\sigma}_0 + i y\boldsymbol{\sigma}_3).
	\end{equation}
\end{definition}

For an admissible gauge $G$, we compute the gauged net:
	\begin{equation*}\label{eq:gauged-immersion}
		\hat{\x} = \x + \xi \Phi_t\inv\left[G\inv \partial_tG\right]^{\tr = 0}\Phi_t =\x,
	\end{equation*}
	\begin{equation*}\label{eq:gauged-normal}
		\hat{\n} = \Phi\inv G\inv (-i \boldsymbol{\sigma}_{3}) G \Phi=\n,
	\end{equation*}
and we have the following lemma.

\begin{lemma}[Gauge invariance]\label{lemma:gauge-invariance}
	Let $\Phi_t$ be a family of frames and let $(\x,\n)$ be the contact element 
	net given by the Sym formula. Let $G$ be an admissible gauge, let $\hat{\Phi}_t := G\Phi_t$, and let $(\hat{\x}, \hat{\n})$ be the net induced by $\hat{\Phi}_t$.
	Then $(\x,\n) = (\hat{\x},\hat{\n})$.
\end{lemma}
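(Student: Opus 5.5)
The plan is to compute $\hat{\x}$ and $\hat{\n}$ directly from the Sym formula \eqref{eq:sym-bob} applied to $\hat{\Phi}_t = G\Phi_t$. Then we use two facts about an admissible gauge $G=\exp(x_t\boldsymbol{\sigma}_0 + iy\boldsymbol{\sigma}_3)$: it commutes with $-i\boldsymbol{\sigma}_3$, and its logarithmic $t$-derivative is real scalar.

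For the normal, $\hat{\n} = \Phi\inv G\inv(-i\boldsymbol{\sigma}_3)G\Phi$. Since $G$ is a linear combination of $\boldsymbol{\sigma}_0$ and $\boldsymbol{\sigma}_3$, it is diagonal, so it commutes with the diagonal matrix $-i\boldsymbol{\sigma}_3$. Hence $G\inv(-i\boldsymbol{\sigma}_3)G = -i\boldsymbol{\sigma}_3$, which gives $\hat{\n}=\n$.

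For the position, the product rule gives
\[
\hat{\Phi}\inv\partial_t\hat{\Phi} = \Phi\inv G\inv\bigl((\partial_tG)\Phi + G\partial_t\Phi\bigr) = \Phi\inv\partial_t\Phi + \Phi\inv\bigl(G\inv\partial_tG\bigr)\Phi .
\]
Because $y$ does not depend on $t$ and $\boldsymbol{\sigma}_0$ commutes with $\boldsymbol{\sigma}_3$, we have $\partial_t G = (\partial_t x_t)\,G$. Therefore $G\inv\partial_tG = (\partial_t x_t)\boldsymbol{\sigma}_0$, a real scalar matrix. Conjugating a scalar by $\Phi$ leaves it unchanged, and a scalar has zero trace-free part. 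Applying $[\,\cdot\,]^{\tr=0}$, which is linear, we get $[\hat{\Phi}\inv\partial_t\hat{\Phi}]^{\tr=0} = [\Phi\inv\partial_t\Phi]^{\tr=0}$. Combined with $\hat{\n}=\n$, this yields $\hat{\x} = \xi[\Phi\inv\partial_t\Phi]^{\tr=0}+\tau\n = \x$. Note that the trace-free projection commutes with conjugation, since conjugation preserves the trace and fixes $\boldsymbol{\sigma}_0$; this justifies writing the correction term as $\xi\Phi\inv[G\inv\partial_tG]^{\tr=0}\Phi$, as in the displayed computation preceding the lemma.

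There is no real obstacle here. The only point needing care is the $t$-independence of $y$: it is exactly what makes $G\inv\partial_tG$ scalar. Had $y$ been allowed to depend on $t$, a term $i(\partial_t y)\boldsymbol{\sigma}_3$ would survive and produce a nonzero tangential correction. I would make this explicit in the proof.
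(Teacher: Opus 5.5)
Your proposal is correct and follows the same route as the paper: a direct computation with the Sym formula, giving $\hat{\x}=\x+\xi\Phi_t\inv\bigl[G\inv\partial_tG\bigr]^{\tr=0}\Phi_t=\x$ because $G\inv\partial_tG=(\partial_t x_t)\boldsymbol{\sigma}_0$ is a real scalar, and $\hat{\n}=\n$ because the diagonal $G$ commutes with $-i\boldsymbol{\sigma}_3$. You also make explicit two points the paper leaves tacit, namely the $t$-independence of $y$ and the commutation of the trace-free projection with conjugation.
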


\subsubsection{Edge-constraint nets and circular nets}
We give notations for the difference and summation operators acting on
functions of two discrete variables. For a function $F:\Zz^2 \to \Rr^n$ or $\Cc^n$,
$$\delta_j F(j,k):=F(j+1,k)-F(j,k),\;\; \delta_k F(j,k):=F(j,k+1)-F(j,k),$$
$$\sigma_j F(j,k):=F(j+1,k)+F(j,k),\;\; \sigma_k F(j,k):=F(j,k+1)+F(j,k).$$
In the case of a function of one discrete variable, we can abbreviate these to $\delta$ and $\sigma$.

We will define edge-constraint nets and their curvatures as in \cite{HS,HSW}.

\begin{definition} \label{definition:ec-net}
	A contact element net $(\x, \n)$ is an \textbf{edge-constraint net} (abbr. \textbf{ec-net}) 
	if, for $\star= j, k$,
	\begin{subequations} \label{eq:ec}
	\begin{equation}
		\scal{\delta_{\star} \x(j,k), \sigma_{\star} \n(j,k)}=0,
	\end{equation}
	 and  
		\begin{equation} \label{eq:degenerate1}
		\delta_{\star} \x(j,k) \neq 0,\;\;
    \sigma_{\star} \n(j,k) \neq 0
    \end{equation}
		for all edges $(j+1,k)(j,k)$ and $(j,k+1)(j,k)$, and 
    \begin{equation} \label{eq:degenerate2}
			\det (\x(j+1,k+1)-\x(j,k),\x(j+1,k)-\x(j,k+1),N)\neq 0
		\end{equation}
	\end{subequations}
		for all faces of $\x$, where $N$ is a unit normal vector such that 
		$$N \perp \textnormal{span}\{\n(j+1,k+1)-\n(j,k),\n(j+1,k)-\n(j,k+1)\}.$$
\end{definition}

\begin{definition}
	For every face of an ec-net $(\x,\n)$, the \textbf{Gaussian curvature} $K$ is
	defined as
	\begin{equation} \label{eq:Gaussian curvature}
		K:=\frac{\det (\n(j+1,k+1)-\n(j,k),\n(j+1,k)-\n(j,k+1),N)}{\det (\x(j+1,k+1)-\x(j,k),\x(j+1,k)-\x(j,k+1),N)}
	\end{equation}
	and \textbf{mean curvature} $H$ is defined as
	\begin{multline} \label{eq:mean curvature}
		H:=\frac{1}{2}\frac{\det (\x(j+1,k+1)-\x(j,k),\n(j+1,k)-\n(j,k+1),N)}{\det (\x(j+1,k+1)-\x(j,k),\x(j+1,k)-\x(j,k+1),N)}\\
		+\frac{1}{2}\frac{\det (\n(j+1,k+1)-\n(j,k),\x(j+1,k)-\x(j,k+1),N)}{\det (\x(j+1,k+1)-\x(j,k),\x(j+1,k)-\x(j,k+1),N)},
	\end{multline}
	where $N$ is a unit normal vector as in Definition \ref{definition:ec-net}.

  We will abbreviate the cases of constant Gaussian curvature $K$ and constant mean curvature $H$ as
 CGC $K$ and CMC $H$, respectively.
\end{definition}

Furthermore, to consider discrete surfaces with curvature line coordinates, 
we define circular nets, see for example \cite{BPW, BHR, BHRS, BP2, BS, NORYPJ, HSW}.

\begin{definition} \label{definition:c-net}
	  A map $\x : V \to \Rr^3$ is a \textbf{circle-circumscribed net} (abbr. \textbf{cc-net}) 
		if every face of $\x$ has all vertices lying on a circle.
    A contact element $(\x,\n)$ is a \textbf{circular net} (abbr. \textbf{c-net})  if:
    \begin{itemize}
		\item[\textnormal{(1)}] $(\x,\n)$ is an ec-net and $\x$ is a cc-net, and 
    \item[\textnormal{(2)}] $\delta_{\star} \x(j,k) \parallel \delta_{\star} \n(j,k)$ for $\star=j,k$.
    \end{itemize}
    A cc-net $\x$ is \textbf{locally embedded} if 
    \begin{multline} \label{eq:embeddedness}
			\text{cr}(\x(j,k), \x(j+1,k), \x(j+1,k+1), \x(j,k+1))\\
			=(\x(j,k)-\x(j+1,k))(\x(j+1,k)-\x(j+1,k+1))\inv\\
			\cdot(\x(j+1,k+1)-\x(j,k+1))(\x(j,k+1)-\x(j,k))\inv<0,
		\end{multline}
		and a c-net is a \textbf{discrete isothermic net} if 
		$$\text{cr}(\x(j,k), \x(j+1,k), \x(j+1,k+1), \x(j,k+1))=\frac{a_{(j+1,k)(j,k)}}{b_{(j,k+1)(j,k)}}<0$$
		for all faces, where $a$ and $b$, defined on edges, are an edge-labeling, i.e.
		$$a_{(j+1,k)(j,k)}=a_{(j+1,k+1)(j,k+1)},\;\;b_{(j,k+1)(j,k)}=b_{(j+1,k+1)(j+1,k)}.$$
		We call $a$ and $b$ \textbf{cross ratio factorizing functions}.
\end{definition}

Figure \ref{figure:embedded} shows examples of locally embedded and non-embedded faces.

\begin{figure} 
\setlength{\tabcolsep}{35pt}
\begin{tabular}{cc}
\begin{tikzpicture}[scale=0.8, line cap=round, line join=round]
  \def\r{2.2}
  \coordinate (O) at (0,0);

  \coordinate (A) at ({\r*cos(20)},{\r*sin(20)}); 
  \coordinate (B) at ({\r*cos(130)},{\r*sin(130)});
  \coordinate (C) at ({\r*cos(210)},{\r*sin(210)}); 
  \coordinate (D) at ({\r*cos(350)},{\r*sin(350)});

  \fill[gray!40] (A)--(B)--(C)--(D)--cycle;

  \draw (O) circle (\r);

  \draw (D)--(A);
  \draw (A)--(B);
  \draw (B)--(C);
  \draw (D)--(C);

  \fill (A) circle (2.2pt);
	\fill (B) circle (2.2pt);
  \fill (D) circle (2.2pt);
  \fill (C) circle (2.2pt);
\end{tikzpicture}&
\begin{tikzpicture}[scale=0.8, line cap=round, line join=round]
  \def\r{2.2}
  \coordinate (O) at (0,0);

  \coordinate (A) at ({\r*cos(60)},{\r*sin(60)}); 
  \coordinate (B) at ({\r*cos(160)},{\r*sin(160)});
  \coordinate (C) at ({\r*cos(230)},{\r*sin(230)}); 
  \coordinate (D) at ({\r*cos(330)},{\r*sin(330)});

  \fill[gray!40] (A)--(B)--(D)--(C)--cycle;

  \draw (O) circle (\r);

  \draw (A)--(B);
  \draw (B)--(D);
  \draw (D)--(C);
  \draw (C)--(A);

  \fill (A) circle (2.2pt);
	\fill (B) circle (2.2pt);
  \fill (D) circle (2.2pt);
  \fill (C) circle (2.2pt);
\end{tikzpicture}
\end{tabular}
\caption{Left: a face of a locally embedded cc-net, right: a face of a non-embedded cc-net.}
\label{figure:embedded}
\end{figure}
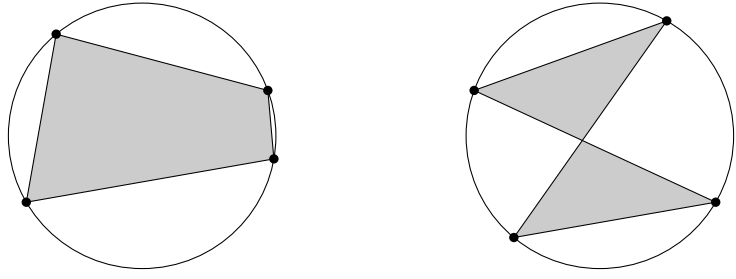

\subsubsection{CGC and CMC circular nets}
We will now compare choices for flat connections for constant Gaussian curvature circular nets (both with $K>0$ and $K<0$), which involves looking
at functions defined on vertices or edges, and at spectral parameters and compatibility conditions.
In the case of constant negative Gaussian curvature, we will have two distinct choices, each with its own advantages, as we will see later.

\paragraph{\textbf{Positive CGC circular nets and CMC isothermic nets}:}

In \cite{BP3,PW}, positive CGC circular nets with $K=1$ are made with $\xi=-2$ and $\tau=0$ 
and their corresponding CMC isothermic nets are made with $\xi=-2$ and $\tau=\pm1$ in \eqref{eq:sym-bob}. 
For two different edges $e_1 \in E_1^{\pm}$, $e_2 \in E_2^{\pm}$, 
we will use a connection that takes the form
\begin{subequations} \label{eq:connection-CMC}
\begin{equation}
	\eta_{e_1} = \frac{1}{\alpha}\matrix{\vfrak}{-e^{i t} u - e^{-i t} u\inv}{{e^{-i t}} {u} + e^{i t} {u}\inv}{\bar{\vfrak}},\;\; \vfrak\colon E_1^{\pm} \to \Cc,\;\; u\colon E_1^{\pm} \to \Rr^{\times},
\end{equation}
\begin{equation}
  \eta_{e_2}= \frac{1}{\beta}\matrix{\hfrak}{- i e^{i t} v + i e^{-i t} v\inv}{i e^{i t} v\inv - i e^{-i t} v}{\bar{\hfrak}},\;\; \hfrak\colon E_2^{\pm} \to \Cc, \;\; v\colon E_2^{\pm} \to \Rr^{\times},
\end{equation}
\end{subequations}
where 
\begin{equation}
	\alpha^2 = \abs{\vfrak}^2 + u^2 + u^{-2} + e^{2 i t} + e^{-2 i t },\;\;
  \beta^2=\abs{\hfrak}^2 + v^2 + v^{-2} - e^{2 i t} - e^{-2 i t }
\end{equation}
and $\alpha$, $\beta$ coincide on opposite edges, that is,
$$\alpha_{\mathfrak{qp}}=\alpha_{\mathfrak{rs}},\;\;\beta_{\mathfrak{sp}}=\beta_{\mathfrak{rq}}$$
for all faces $\mathfrak{pqrs} \in F$.
Note that the equations for $\alpha$, $\beta$ make $\eta$ a unitary quaternion. 
With regard to the orientation, we have
\begin{equation}
\alpha_{\mathfrak{pq}} = -\alpha_{\mathfrak{qp}},\;\;u_{\mathfrak{pq}} = u_{\mathfrak{qp}},\;\; \vfrak_{\mathfrak{pq}}=-\overline{\vfrak_{\mathfrak{qp}}}
\end{equation}
for all edges $\mathfrak{qp} \in E_{1}^{+}$, $\mathfrak{pq} \in E_{1}^{-}$, and
\begin{equation}
\beta_{\mathfrak{ps}}=-\beta_{\mathfrak{sp}},\;\;v_{\mathfrak{ps}}= v_{\mathfrak{sp}},\;\; \hfrak_{\mathfrak{ps}}=-\overline{\hfrak_{\mathfrak{sp}}}
\end{equation}
for all edges $\mathfrak{sp} \in E_{2}^{+}$, $\mathfrak{ps} \in E_{2}^{-}$.
We note that the cross ratio for each face of $\x$ for a CMC isothermic net at $t=0$ is $-\beta^2/\alpha^2$ (see \cite{BP3}),
thus $\alpha^2$ and $-\beta^2$ are cross ratio factorizing functions.

\begin{lemma}[\cite{BP3}]\label{lemma:compatibility1}
    The connection $\eta$ of the form \eqref{eq:connection-CMC}
	is flat for all $t$ if and only if
	\begin{subequations}\label{eq:compatibilitycmc}
		\begin{equation}\label{eq:compatibilitycmc1}
			u_{\mathfrak{qp}}u_{\mathfrak{rs}} = v_{\mathfrak{sp}}v_{\mathfrak{rq}},
		\end{equation}
		\begin{equation}\label{eq:compatibilitycmc2}
			\hfrak_{\mathfrak{rq}}\vfrak_{\mathfrak{qp}}-\vfrak_{\mathfrak{rs}}\hfrak_{\mathfrak{sp}} \\
            =i (u_{\mathfrak{rs}}v_{\mathfrak{sp}} - (u_{\mathfrak{rs}}v_{\mathfrak{sp}})\inv \\
            + v_{\mathfrak{rq}}u_{\mathfrak{qp}} - (v_{\mathfrak{rq}}u_{\mathfrak{qp}})\inv),
		\end{equation}
		\begin{equation}\label{eq:compatibilitycmc3}
			i\;v_{\mathfrak{rq}}\overline{\vfrak_{\mathfrak{qp}}} - u_{\mathfrak{rs}}\overline{\hfrak_{\mathfrak{sp}}} \\
            +\hfrak_{\mathfrak{rq}}u_{\mathfrak{qp}} - i\;\vfrak_{\mathfrak{rs}}v_{\mathfrak{sp}} = 0,
		\end{equation}
		\begin{equation}\label{eq:compatibilitycmc4}
			i\;v_{\mathfrak{sp}}\overline{\vfrak_{\mathfrak{qp}}} + u_{\mathfrak{qp}} \overline{\hfrak_{\mathfrak{sp}}} \\
            -\hfrak_{\mathfrak{rq}}u_{\mathfrak{rs}} - i\;\vfrak_{\mathfrak{rs}}v_{\mathfrak{rq}} = 0
		\end{equation}
	\end{subequations}
	for all faces $\mathfrak{pqrs} \in F$.
\end{lemma}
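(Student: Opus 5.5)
The plan is a direct expansion of the compatibility condition \eqref{eq:compa} in powers of $e^{it}$. On a face $\mathfrak{pqrs}$ flatness reads $\eta_{\mathfrak{rq}}\eta_{\mathfrak{qp}}=\eta_{\mathfrak{rs}}\eta_{\mathfrak{sp}}$. The scalar prefactors are $\frac{1}{\beta_{\mathfrak{rq}}\alpha_{\mathfrak{qp}}}$ on the left and $\frac{1}{\alpha_{\mathfrak{rs}}\beta_{\mathfrak{sp}}}$ on the right. Since $\alpha_{\mathfrak{qp}}=\alpha_{\mathfrak{rs}}$ and $\beta_{\mathfrak{sp}}=\beta_{\mathfrak{rq}}$, these prefactors agree. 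Flatness is therefore equivalent to the equality of the two products of the unnormalized $2\times 2$ matrices, namely
\[
\matrix{\hfrak_{\mathfrak{rq}}}{B_{\mathfrak{rq}}}{-\overline{B}_{\mathfrak{rq}}}{\bar{\hfrak}_{\mathfrak{rq}}}\matrix{\vfrak_{\mathfrak{qp}}}{A_{\mathfrak{qp}}}{-\overline{A}_{\mathfrak{qp}}}{\bar{\vfrak}_{\mathfrak{qp}}}=\matrix{\vfrak_{\mathfrak{rs}}}{A_{\mathfrak{rs}}}{-\overline{A}_{\mathfrak{rs}}}{\bar{\vfrak}_{\mathfrak{rs}}}\matrix{\hfrak_{\mathfrak{sp}}}{B_{\mathfrak{sp}}}{-\overline{B}_{\mathfrak{sp}}}{\bar{\hfrak}_{\mathfrak{sp}}},
\]
where $A=-e^{it}u-e^{-it}u^{-1}$ and $B=-ie^{it}v+ie^{-it}v^{-1}$. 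Here the bars are understood formally, for real $t$, and the identities are then extended as Laurent polynomial identities in $\lambda=e^{it}$.

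Both sides are quaternions of the form $\matrix{a}{b}{-\bar b}{\bar a}$ and products of such matrices keep this form. So it suffices to compare the $(1,1)$ and $(1,2)$ entries. Each entry is a Laurent polynomial in $\lambda$ of degree at most $2$ in absolute value. The functions $u,v,\vfrak,\hfrak$ do not depend on $t$, so equality for all $t$ is equivalent to equality of the coefficients of $\lambda^{2},\lambda^{1},\lambda^{0},\lambda^{-1},\lambda^{-2}$.

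First consider the $(1,1)$ entry. On the left it equals $\hfrak_{\mathfrak{rq}}\vfrak_{\mathfrak{qp}}-B_{\mathfrak{rq}}\overline{A}_{\mathfrak{qp}}$, and on the right it equals $\vfrak_{\mathfrak{rs}}\hfrak_{\mathfrak{sp}}-A_{\mathfrak{rs}}\overline{B}_{\mathfrak{sp}}$. Only the products $BA$ contribute $\lambda^{\pm2}$ terms. The $\lambda^2$ coefficient gives $v_{\mathfrak{rq}}u_{\mathfrak{qp}}^{-1}$ on one side against $u_{\mathfrak{rs}}v_{\mathfrak{sp}}^{-1}$ on the other, up to a common factor. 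This is exactly \eqref{eq:compatibilitycmc1}. The $\lambda^{-2}$ coefficient gives the same equation. The $\lambda^0$ coefficient collects $\hfrak\vfrak$ together with the cross terms $uv-(uv)^{-1}$, and this gives \eqref{eq:compatibilitycmc2}.

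Next consider the $(1,2)$ entry, which equals $\hfrak_{\mathfrak{rq}}A_{\mathfrak{qp}}+B_{\mathfrak{rq}}\bar{\vfrak}_{\mathfrak{qp}}$ on the left and $\vfrak_{\mathfrak{rs}}B_{\mathfrak{sp}}+A_{\mathfrak{rs}}\bar{\hfrak}_{\mathfrak{sp}}$ on the right. It is linear in $\lambda^{\pm1}$. Its $\lambda^{1}$ and $\lambda^{-1}$ coefficients give the two linear relations \eqref{eq:compatibilitycmc3} and \eqref{eq:compatibilitycmc4}.

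The remaining task is to check that no additional independent conditions arise. The $(1,1)$ entry has no odd powers of $\lambda$, and the $(1,2)$ entry has no even powers. The other two entries are determined by the first two through the quaternionic structure.

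I expect the main obstacle to be sign and conjugation bookkeeping. This includes the orientation conventions $\vfrak_{\mathfrak{pq}}=-\overline{\vfrak_{\mathfrak{qp}}}$, the role of complex $t$ versus real $t$ in taking bars, and matching the normalization of each coefficient equation with the stated forms. In particular, one must verify that the $\lambda^{-1}$ coefficient reproduces \eqref{eq:compatibilitycmc4} exactly, rather than an equation equivalent to it only after using \eqref{eq:compatibilitycmc1}.
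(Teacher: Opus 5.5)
Your proposal is correct and is the direct computation the paper defers to \cite{BP3} (it later calls the analogous CGC lemma ``the same computation''). You cancel the equal prefactors, then compare the $e^{\pm 2it}$ and $e^{0}$ coefficients of the $(1,1)$ entry and the $e^{\pm it}$ coefficients of the $(1,2)$ entry. On your flagged worry: the $e^{-it}$ coefficient actually yields $i v_{\mathfrak{rq}}\inv\overline{\vfrak_{\mathfrak{qp}}} - u_{\mathfrak{qp}}\inv\hfrak_{\mathfrak{rq}} - i v_{\mathfrak{sp}}\inv\vfrak_{\mathfrak{rs}} + u_{\mathfrak{rs}}\inv\overline{\hfrak_{\mathfrak{sp}}}=0$, which becomes \eqref{eq:compatibilitycmc4} only after multiplying by $v_{\mathfrak{sp}}v_{\mathfrak{rq}}$ and using \eqref{eq:compatibilitycmc1}. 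This is harmless, because the lemma asserts equivalence of the whole system, not of each equation separately.
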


When $u, v>0$ (see \cite{BP3, PW}), we can rewrite $u$ and $v$ 
in exponential form in terms of a function defined on vertices. Defining $\rho : V \to \Rr$ by 
\begin{equation} \label{eq:vertexfunc}
	u_{\mathfrak{qp}}=e^{\frac{1}{2}(\rho(\mathfrak{q})+\rho(\mathfrak{p}))},\;\;
	v_{\mathfrak{sp}}=e^{\frac{1}{2}(\rho(\mathfrak{s})+\rho(\mathfrak{p}))},
\end{equation}
\eqref{eq:compatibilitycmc1} is satisfied automatically, and 
\eqref{eq:compatibilitycmc2}, \eqref{eq:compatibilitycmc3}, \eqref{eq:compatibilitycmc4} become
\begin{subequations} \label{eq:discrete sinh gordon}
	\begin{equation}
		\hfrak_{\mathfrak{rq}}\vfrak_{\mathfrak{qp}}-\vfrak_{\mathfrak{rs}}\hfrak_{\mathfrak{sp}}\\
		=4 i  \sinh \frac{\rho(\mathfrak{r})+\rho(\mathfrak{q})+\rho(\mathfrak{s})+\rho(\mathfrak{p})}{2} \cosh \frac{\rho(\mathfrak{s})-\rho(\mathfrak{q})}{2},
	\end{equation}
	\begin{equation}
		\hfrak_{\mathfrak{rq}}=
		\frac{1}{\cosh \frac{\rho(\mathfrak{s})-\rho(\mathfrak{q})}{2}}\Bigg(\overline{\hfrak_{\mathfrak{sp}}} \cosh \frac{\rho(\mathfrak{r})-\rho(\mathfrak{p})}{2}\\
		- i \overline{\vfrak_{\mathfrak{qp}}}\sinh \frac{\rho(\mathfrak{r})-\rho(\mathfrak{s})+\rho(\mathfrak{q})-\rho(\mathfrak{p})}{2}\Bigg),
	\end{equation}
	\begin{equation}
		\vfrak_{\mathfrak{rs}}=
		\frac{1}{\cosh \frac{\rho(\mathfrak{s})-\rho(\mathfrak{q})}{2}}\Bigg(i \overline{\hfrak_{\mathfrak{sp}}} \sinh \frac{\rho(\mathfrak{r})-\rho(\mathfrak{q})+\rho(\mathfrak{s})-\rho(\mathfrak{p})}{2}\\
		+ \overline{\vfrak_{\mathfrak{qp}}} \cosh \frac{\rho(\mathfrak{r})-\rho(\mathfrak{p})}{2}\Bigg),
	\end{equation}
\end{subequations}
which are shown to be a discretized version of the elliptic sinh-Gordon equation in \cite{PW}.

Building upon \cite{HSW}, the work \cite{HSS} defines 4D cross ratio systems and provides  
another form for Lax pairs. 

\paragraph{\textbf{Negative CGC circular nets (cK-nets)}:}
In \cite{HS}, the Lax pairs for circular nets with Gaussian curvature 
$K=-1$ are made with $\xi=2$ and $\tau = 0$ in the Sym formula 
\eqref{eq:sym-bob}. The Lax pair takes the form
\begin{subequations} \label{eq:Lax}
 \begin{equation}\label{eq:Lax1}
  \mathcal{L}_{\mathfrak{qp}} =\matrix{\ell( s(\mathfrak{p}) \inv \cot \frac{\delta_{(1)}}{2} +  s(\mathfrak{q}) \tan \frac{\delta_{(1)}}{2} )}
     {i(e^t - e^{-t} s(\mathfrak{p})s(\mathfrak{q}))}
     {i(e^t - e^{-t} (s(\mathfrak{p})s(\mathfrak{q}))\inv)}
     {\ell\inv( s(\mathfrak{p}) \cot \frac{\delta_{(1)}}{2} +  s(\mathfrak{q}) \inv \tan \frac{\delta_{(1)}}{2} )},
 \end{equation}
 \begin{equation}\label{eq:Lax2}
  \mathcal{M}_{\mathfrak{sp}} =\matrix{m( s(\mathfrak{p}) \inv \cot \frac{\delta_{(2)}}{2} +  s(\mathfrak{s}) \tan \frac{\delta_{(2)}}{2} )}
     {i(e^t - e^{-t} s(\mathfrak{p})s(\mathfrak{s}))}
     {i(e^t - e^{-t} (s(\mathfrak{p})s(\mathfrak{s}))\inv)}
     {m\inv( s(\mathfrak{p}) \cot \frac{\delta_{(2)}}{2} +  s(\mathfrak{s}) \inv \tan \frac{\delta_{(2)}}{2} )},
 \end{equation}
\end{subequations}
where $s \colon V \to \Ss^1$ and the functions 
$\ell$, $m$, $\delta_{(1)}$, $\delta_{(2)}$ are defined on edges. 
Here for $i=1$ or $i=2$, if $|\sin \delta_{(i)}|\leq1$, then $\ell$ 
or $m$ is unitary, respectively. If $|\sin\delta_{(i)}|>1$, then 
the absolute value of $l$ or $m$ is determined so that \eqref{eq:Lax1} 
or \eqref{eq:Lax2} is a quaternion. To make the orientation conditions 
$L_{\mathfrak{qp}}=L_{\mathfrak{pq}}\inv$ 
and $M_{\mathfrak{sp}}=M_{\mathfrak{ps}}\inv$ explicit,
and for considering flatness of $L$ and $M$,
and for making comparison with the positive CGC and CMC cases above, 
for two different edges $e_1 \in E_1^{\pm}$, $e_2 \in E_2^{\pm}$, 
we consider a different connection using only edge variables as in Remark 15 of \cite{HS}:

\begin{subequations}\label{eq:connection-CGC}
\begin{equation}
 \eta_{e_1} =L_{e_1} = \frac{1}{\alpha}\matrix{\vfrak}{e^{-t} u - e^t u\inv}{e^t u - e^{-t} u\inv}{\bar{\vfrak}},\;\; \vfrak\colon E_1^{\pm} \to \Cc,\;\; u\colon E_1^{\pm} \to \Ss^1,
\end{equation}
\begin{equation}
 \eta_{e_2} =M_{e_2}= \frac{1}{\beta}\matrix{\hfrak}{e^{-t} v - e^t v\inv}{e^t v - e^{-t} v\inv}{\bar{\hfrak}},\;\; \hfrak\colon E_2^{\pm} \to \Cc, \;\; v\colon E_2^{\pm} \to \Ss^1,
\end{equation}
\end{subequations}
where
\begin{equation}
	\alpha^2 = \abs{\vfrak}^2 - u^2 -u^{-2} + e^{2t} +e^{-2t},\;\;
	\beta^2=\abs{\hfrak}^2 - v^2 -v^{-2} + e^{2t} +e^{-2t}
\end{equation}
and $\alpha$, $\beta$ coincide on opposite edges. 
The functions $\alpha$ and $\beta$ are now not cross ratio factorizing functions, but they 
do correspond to $\delta_{(1)}$ and $\delta_{(2)}$, respectively, as the proof of
Proposition \ref{prop:timandandrewlaxpair} at the end of this section shows.
We set
\begin{equation}
	\alpha_{\mathfrak{pq}} = -\alpha_{\mathfrak{qp}},\;\;u_{\mathfrak{pq}} = u_{\mathfrak{qp}},\;\;\vfrak_{\mathfrak{pq}}=-\overline{\vfrak_{\mathfrak{qp}}},
\end{equation}
\begin{equation}
	\beta_{\mathfrak{ps}}=-\beta_{\mathfrak{sp}},\;\;v_{\mathfrak{ps}}=v_{\mathfrak{sp}},\;\;\hfrak_{\mathfrak{ps}}=-\overline{\hfrak_{\mathfrak{sp}}}.
\end{equation}
Then $\eta$ satisfies the conditions to be a unitary connection, for all $t$,
and we say more about $\eta$ in the next lemma and proposition.

\begin{lemma}\label{lemma:compatibility2}
    The connection $\eta$ of the form \eqref{eq:connection-CGC}
	is flat for all $t\in\Rr$ if and only if
	\begin{subequations}\label{eq:compatibilitycgc}
		\begin{equation}\label{eq:compatibilitycgc1}
			u_{\mathfrak{qp}}u_{\mathfrak{rs}} = v_{\mathfrak{sp}}v_{\mathfrak{rq}},
		\end{equation}
		\begin{equation}\label{eq:compatibilitycgc2}
			\hfrak_{\mathfrak{rq}}\vfrak_{\mathfrak{qp}}-\vfrak_{\mathfrak{rs}}\hfrak_{\mathfrak{sp}} \\
            = u_{\mathfrak{rs}}v_{\mathfrak{sp}} + (u_{\mathfrak{rs}}v_{\mathfrak{sp}})\inv \\
            - v_{\mathfrak{rq}}u_{\mathfrak{qp}} - (v_{\mathfrak{rq}}u_{\mathfrak{qp}})\inv,
		\end{equation}
		\begin{equation}\label{eq:compatibilitycgc3}
			v_{\mathfrak{rq}}\overline{\vfrak_{\mathfrak{qp}}} - u_{\mathfrak{rs}}\overline{\hfrak_{\mathfrak{sp}}} \\
            +\hfrak_{\mathfrak{rq}}u_{\mathfrak{qp}} - \vfrak_{\mathfrak{rs}}v_{\mathfrak{sp}} = 0,
		\end{equation}
		\begin{equation}\label{eq:compatibilitycgc4}
			v_{\mathfrak{sp}} \overline{\vfrak_{\mathfrak{qp}}} - u_{\mathfrak{qp}} \overline{\hfrak_{\mathfrak{sp}}} \\
            +\hfrak_{\mathfrak{rq}}u_{\mathfrak{rs}} - \vfrak_{\mathfrak{rs}}v_{\mathfrak{rq}} = 0.
		\end{equation}
	\end{subequations}
\end{lemma}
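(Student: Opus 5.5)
We will prove Lemma \ref{lemma:compatibility2} by expanding the compatibility condition \eqref{eq:compa}, $M_{\mathfrak{rq}}L_{\mathfrak{qp}} = L_{\mathfrak{rs}}M_{\mathfrak{sp}}$, as a Laurent polynomial identity in $\lambda:=e^{t}$. Since $\alpha_{\mathfrak{qp}}=\alpha_{\mathfrak{rs}}$ and $\beta_{\mathfrak{sp}}=\beta_{\mathfrak{rq}}$, the scalar prefactor $1/(\alpha\beta)$ is common to both sides and can be cancelled. The problem therefore reduces to equality of the products of the unnormalized matrices
\[
\tilde L=\matrix{\vfrak}{\la^{-1} u - \la u\inv}{\la u - \la^{-1} u\inv}{\bar{\vfrak}},\qquad \tilde M=\matrix{\hfrak}{\la^{-1} v - \la v\inv}{\la v - \la^{-1} v\inv}{\bar{\hfrak}}.
\]
These entries are Laurent polynomials in $\la$ of degree at most $2$ in $\la^{\pm1}$. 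An identity holding for all $t\in\Rr$ is equivalent to equality of the coefficients of $\la^{2},\la^{1},\la^{0},\la^{-1},\la^{-2}$ in each of the four matrix entries. Both directions of the "if and only if" then follow at once, since the coefficients of $\vfrak,\hfrak,u,v$ do not depend on $t$.

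We first compute the $(1,1)$ entry of both sides. On the left it is $\hfrak_{\mathfrak{rq}}\vfrak_{\mathfrak{qp}}+(\la^{-1}v_{\mathfrak{rq}}-\la v_{\mathfrak{rq}}\inv)(\la u_{\mathfrak{qp}}-\la^{-1}u_{\mathfrak{qp}}\inv)$, and the right side is the same with the roles interchanged. The $\la^{\pm2}$ coefficients give $v_{\mathfrak{rq}}\inv u_{\mathfrak{qp}}=u_{\mathfrak{rs}}\inv v_{\mathfrak{sp}}$ and its inverse, which is exactly \eqref{eq:compatibilitycgc1}. The $\la^0$ coefficient gives $\hfrak_{\mathfrak{rq}}\vfrak_{\mathfrak{qp}}-v_{\mathfrak{rq}}u_{\mathfrak{qp}}-(v_{\mathfrak{rq}}u_{\mathfrak{qp}})\inv=\vfrak_{\mathfrak{rs}}\hfrak_{\mathfrak{sp}}-u_{\mathfrak{rs}}v_{\mathfrak{sp}}-(u_{\mathfrak{rs}}v_{\mathfrak{sp}})\inv$, which is \eqref{eq:compatibilitycgc2}. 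The $(2,2)$ entry is the complex conjugate of the $(1,1)$ entry once we use $|u|=|v|=1$ (so $\bar u=u\inv$ and $\bar\la=\la$). Hence it yields nothing new: the conjugate of \eqref{eq:compatibilitycgc1} is its inverse, and the right-hand side of \eqref{eq:compatibilitycgc2} is real.

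Next we handle the off-diagonal entries, which contain only odd powers $\la^{\pm1}$. The $(1,2)$ entry on the left is $\hfrak_{\mathfrak{rq}}(\la^{-1}u_{\mathfrak{qp}}-\la u_{\mathfrak{qp}}\inv)+(\la^{-1}v_{\mathfrak{rq}}-\la v_{\mathfrak{rq}}\inv)\overline{\vfrak_{\mathfrak{qp}}}$, and similarly on the right. The $\la^{-1}$ coefficient gives \eqref{eq:compatibilitycgc3} directly. The $\la^{1}$ coefficient gives $\hfrak_{\mathfrak{rq}}u_{\mathfrak{qp}}\inv+v_{\mathfrak{rq}}\inv\overline{\vfrak_{\mathfrak{qp}}}=\vfrak_{\mathfrak{rs}}v_{\mathfrak{sp}}\inv+u_{\mathfrak{rs}}\inv\overline{\hfrak_{\mathfrak{sp}}}$. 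To bring this into the form \eqref{eq:compatibilitycgc4}, we multiply by $u_{\mathfrak{qp}}u_{\mathfrak{rs}}=v_{\mathfrak{sp}}v_{\mathfrak{rq}}$ and use \eqref{eq:compatibilitycgc1}. The $(2,1)$ entry is, up to sign, the conjugate of the $(1,2)$ entry by the quaternionic structure, so its conditions are conjugates of the ones already obtained.

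The only delicate step is the bookkeeping. We must check that the $\la^{1}$ coefficient of the $(1,2)$ entry really reduces to \eqref{eq:compatibilitycgc4}, and not to a conjugate variant, after invoking \eqref{eq:compatibilitycgc1}. We must also make sure that no independent condition arises from the $(2,1)$ entry; this requires using unimodularity of $u$ and $v$ carefully. All remaining steps are routine expansion.
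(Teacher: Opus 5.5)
Your approach (cancel the common factor $1/(\alpha\beta)$, expand $M_{\mathfrak{rq}}L_{\mathfrak{qp}}=L_{\mathfrak{rs}}M_{\mathfrak{sp}}$ as Laurent polynomials in $\la=e^t$, compare coefficients, and discard the $(2,1)$ and $(2,2)$ entries as conjugates by the quaternionic structure) is exactly the direct computation the paper invokes. Your treatment of the $\la^{\pm 2}$ terms and of both off-diagonal coefficients checks out, including the reduction of the $\la^{1}$ term to \eqref{eq:compatibilitycgc4} via \eqref{eq:compatibilitycgc1}.

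The one slip is in the constant term of the $(1,1)$ entry. The product $(\la^{-1}v_{\mathfrak{rq}}-\la v_{\mathfrak{rq}}\inv)(\la u_{\mathfrak{qp}}-\la^{-1}u_{\mathfrak{qp}}\inv)$ contributes $+v_{\mathfrak{rq}}u_{\mathfrak{qp}}+(v_{\mathfrak{rq}}u_{\mathfrak{qp}})\inv$, not its negative, and likewise on the right side. As written, your $\la^0$ equation therefore has the right-hand side of \eqref{eq:compatibilitycgc2} with the opposite sign. With the sign corrected it is exactly \eqref{eq:compatibilitycgc2}.
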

\begin{proof}
	The computation is the same as Lemma \ref{lemma:compatibility1}.
\end{proof}

Similarly to \eqref{eq:vertexfunc}, rewriting $u$, $v$ in terms of a function defined on vertices, 
\eqref{eq:compatibilitycgc} can be rewritten in a form similar to \eqref{eq:discrete sinh gordon}. When we define $\rho : V \to \Rr$ as
\begin{equation} \label{eq:s}
	u_{\mathfrak{qp}}=e^{\frac{1}{2}i(\rho(\mathfrak{q})+\rho(\mathfrak{p}))},\;\;
	v_{\mathfrak{sp}}=e^{\frac{1}{2}i(\rho(\mathfrak{s})+\rho(\mathfrak{p}))},\;\;
\end{equation}
\eqref{eq:compatibilitycgc1} is satisfied automatically, and 
\eqref{eq:compatibilitycgc2}, \eqref{eq:compatibilitycgc3}, \eqref{eq:compatibilitycgc4} become
\begin{subequations} \label{eq:discrete sin gordon}
	\begin{equation}
		\hfrak_{\mathfrak{rq}}\vfrak_{\mathfrak{qp}}-\vfrak_{\mathfrak{rs}}\hfrak_{\mathfrak{sp}}
		=4 \sin \frac{\rho(\mathfrak{r})+\rho(\mathfrak{s})+\rho(\mathfrak{q})+\rho(\mathfrak{p})}{2}\sin \frac{\rho(\mathfrak{s})-\rho(\mathfrak{q})}{2},
	\end{equation}
	\begin{equation}
		\hfrak_{\mathfrak{rq}}=
		\frac{1}{\sin \frac{\rho(\mathfrak{q})-\rho(\mathfrak{s})}{2}}\Bigg(\overline{\hfrak_{\mathfrak{sp}}} \sin \frac{\rho(\mathfrak{r})-\rho(\mathfrak{p})}{2}\\
		- \overline{\vfrak_{\mathfrak{qp}}} \sin \frac{\rho(\mathfrak{r})-\rho(\mathfrak{s})+\rho(\mathfrak{q})-\rho(\mathfrak{p})}{2}\Bigg),
	\end{equation}
	\begin{equation}
		\vfrak_{\mathfrak{rs}}=
		\frac{1}{\sin \frac{\rho(\mathfrak{s})-\rho(\mathfrak{q})}{2}}\Bigg(-\overline{\hfrak_{\mathfrak{sp}}} \sin \frac{\rho(\mathfrak{r})-\rho(\mathfrak{q})+\rho(\mathfrak{s})-\rho(\mathfrak{p})}{2}\\
		+ \overline{\vfrak_{\mathfrak{qp}}} \sin \frac{\rho(\mathfrak{r})-\rho(\mathfrak{p})}{2}\Bigg).
	\end{equation}
\end{subequations}

These equations are anologous to Equations \eqref{eq:discrete sinh gordon}, but we see cosine and sine functions rather than their hyperbolic cosine and sine counterparts.

\begin{proposition}\label{prop:timandandrewlaxpair}
	The connection \eqref{eq:connection-CGC} is gauge equivalent to \eqref{eq:Lax}.
\end{proposition}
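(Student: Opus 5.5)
Our plan is to exhibit an explicit diagonal gauge $G\colon V\to \Hh^\times$ that transforms \eqref{eq:Lax} into a connection of the form \eqref{eq:connection-CGC}, up to a scalar factor on each edge. Scalars do not affect flatness or the Sym formula in any essential way: they can be absorbed into the $e^{x_t}$ part of an admissible gauge, or simply divided out after normalizing to unit quaternions. Since $s\colon V\to\Ss^1$, the natural candidate is a square-root-of-$s$ gauge
\[
G(\mathfrak p)=\matrix{s(\mathfrak p)^{1/2}}{0}{0}{s(\mathfrak p)^{-1/2}},
\]
possibly composed with a constant matrix such as $\matrix{1}{0}{0}{i}$. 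Any such constant factor can be moved through to a rigid motion via \eqref{eq:isom}. Because $G$ is of the form $\exp(iy\boldsymbol{\sigma}_3)$, it is admissible, and Lemma \ref{lemma:gauge-invariance} then guarantees that the nets coincide.

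The first step is to compute $(G\cdot\mathcal L)_{\mathfrak{qp}}=G(\mathfrak q)\mathcal L_{\mathfrak{qp}}G(\mathfrak p)^{-1}$ directly. The diagonal entries pick up the factor $(s(\mathfrak q)/s(\mathfrak p))^{\pm1/2}$, and the off-diagonal entries pick up $(s(\mathfrak q)s(\mathfrak p))^{\pm1/2}$. Writing $w=(s(\mathfrak p)s(\mathfrak q))^{1/2}\in\Ss^1$, the $(1,2)$ entry becomes $i w(e^t-e^{-t}w^2)=i w^2(e^t w^{-1}-e^{-t}w)$, and the $(2,1)$ entry becomes, symmetrically, $i w^{-2}(e^t w-e^{-t}w^{-1})$. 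After dividing by a suitable common factor (and using the constant conjugation to absorb the $i$ and the phase $w^{\pm2}$), these match $e^{-t}u-e^tu^{-1}$ and $e^tu-e^{-t}u^{-1}$ with $u=\pm w$ or $u=\pm i w$, up to sign. This choice is consistent with \eqref{eq:s} if we put $s=e^{i\rho}$ (possibly shifted by a constant phase). It also makes \eqref{eq:compatibilitycgc1} automatic, as it should be. The $M$ part is handled identically with $v=(s(\mathfrak p)s(\mathfrak s))^{1/2}$.

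The second step is to identify the diagonal entry $\vfrak$. After the gauge and the rescaling, it is
\[
\ell\, \bigl(s(\mathfrak p)s(\mathfrak q)\bigr)^{-1/2}\Bigl(\cot\tfrac{\delta_{(1)}}{2}+s(\mathfrak p)s(\mathfrak q)\tan\tfrac{\delta_{(1)}}{2}\Bigr)
\]
times a phase. We must check that the $(2,2)$ entry is its conjugate, so that the result is a quaternion. This uses $|\ell|=1$ when $|\sin\delta_{(1)}|\le1$, and the defining normalization of $|\ell|$ otherwise. We then compute the determinant: the determinant of the gauged matrix equals $\alpha^2=|\vfrak|^2-u^2-u^{-2}+e^{2t}+e^{-2t}$. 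This gives the correspondence between $\alpha$ and $\delta_{(1)}$, namely that $\alpha^2$ is an explicit function of $\delta_{(1)}$ and $\ell$ alone, independent of $s$. Comparing with the requirement that $\alpha$ agree on opposite edges matches the fact that $\delta_{(1)}$ is an edge-labeling in \cite{HS}.

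The main obstacle is the bookkeeping of square roots and phases. Choosing $s^{1/2}$ consistently on $V$ may require a sign choice. The sign ambiguities are harmless, since $-G$ is still a gauge and edges may carry a sign $\pm1$, which can be absorbed into $\alpha$ or $u$. Orientation reversal also has to be respected: we need to verify that $\vfrak_{\mathfrak{pq}}=-\overline{\vfrak_{\mathfrak{qp}}}$ and $\alpha_{\mathfrak{pq}}=-\alpha_{\mathfrak{qp}}$ follow from $\mathcal L_{\mathfrak{pq}}=\mathcal L_{\mathfrak{qp}}^{-1}$. We will do this using the formula for the inverse of a $2\times2$ matrix and the determinant computed above.
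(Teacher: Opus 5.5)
Your strategy is the paper's: a diagonal square-root-of-$s$ gauge, a constant $\sqrt{i}$-type diagonal factor, and a real scalar vertex gauge for the normalization $1/\alpha$. However, the gauge you actually propose has its $s$-dependence inverted, so the key step fails, and the reason you give for why it still works is false.

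With $G=\mathrm{diag}(s^{1/2},s^{-1/2})$ you correctly obtain the off-diagonal entries $iw(e^t-e^{-t}w^2)$ and $iw^{-1}(e^t-e^{-t}w^{-2})$, where $w=(s(\mathfrak p)s(\mathfrak q))^{1/2}$. Try to match them to $\lambda(e^{-t}u-e^tu^{-1})$ and $\lambda(e^tu-e^{-t}u^{-1})$ with a common $\lambda$. This forces $u=\pm w$. Then the first entry gives $\lambda=\mp iw^{2}$ and the second gives $\lambda=\pm iw^{-2}$, i.e.\ $w^4=-1$. A constant conjugation by $\mathrm{diag}(c,c^{-1})$ rescales the two entries by $c^{2}$ and $c^{-2}$, so it only turns this into $c^4=-w^{-4}$. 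Since $w$ varies from edge to edge, no constant works. So the phases $w^{\pm2}=(s(\mathfrak p)s(\mathfrak q))^{\pm1}$ cannot be ``absorbed by the constant conjugation''. The extra non-constant diagonal phase $s(\mathfrak q)/s(\mathfrak p)$ that you set aside as ``times a phase'' is a symptom of the same error.

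The repair is to use $H=\mathrm{diag}\bigl(\sqrt{i}\,s^{-1/2},\,s^{1/2}/\sqrt{i}\bigr)$ with $s^{1/2}=e^{i\rho/2}$. This gauge is admissible, and it is the inverse of the paper's $\mathrm{diag}(\sqrt{s}/\sqrt{i},\sqrt{i}/\sqrt{s})$; the paper gauges in the other direction, from \eqref{eq:connection-CGC} to \eqref{eq:Lax}. A direct computation gives, with no leftover phases,
\[
H(\mathfrak q)\,\mathcal L_{\mathfrak{qp}}\,H(\mathfrak p)^{-1}=
\begin{pmatrix}
\ell\bigl(u^{-1}\cot\frac{\delta_{(1)}}{2}+u\tan\frac{\delta_{(1)}}{2}\bigr) & e^{-t}u-e^{t}u^{-1}\\
e^{t}u-e^{-t}u^{-1} & \ell^{-1}\bigl(u\cot\frac{\delta_{(1)}}{2}+u^{-1}\tan\frac{\delta_{(1)}}{2}\bigr)
\end{pmatrix},
\qquad u=\sqrt{s(\mathfrak p)}\sqrt{s(\mathfrak q)} .
\]
So $H\cdot\mathcal L$ is $\alpha$ times \eqref{eq:connection-CGC}, with:
\begin{itemize}
\item $u$ as in \eqref{eq:s};
\item $\vfrak$ exactly the expression you displayed (it is the right formula for $H$, not for your $G$);
\item $\alpha^2=\det\mathcal L_{\mathfrak{qp}}=\cot^2\frac{\delta_{(1)}}{2}+\tan^2\frac{\delta_{(1)}}{2}+e^{2t}+e^{-2t}$. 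Here $\ell$ drops out, and this matches $\sin\delta_{(1)}=2/\alpha|_{t=0}$ in \eqref{eq:delta}.
\end{itemize}

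Finally, ``dividing out'' a scalar on each edge is not by itself a gauge transformation. You need the vertex function $g$ with $g(\mathfrak q)=\alpha_{\mathfrak{qp}}g(\mathfrak p)$ and $g(\mathfrak s)=\beta_{\mathfrak{sp}}g(\mathfrak p)$ on positive edges, as in the paper. It is well defined exactly because $\alpha$ and $\beta$ coincide on opposite edges. Your remark about $\delta_{(1)}$ being an edge labeling should be turned into this explicit construction.
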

\begin{proof}
	Let $s\colon V \to \Ss^1$ defined by 
	\begin{equation} \label{eq:ss}
    s:=e^{i \rho}
	\end{equation}
	for $\rho$ as in \eqref{eq:s}. 
	For an arbitrary initial value $g_{0}\in\Rr^{\times}$, 
	define $g\colon V \to \Rr^{\times}$ by
	\begin{equation*}
		g(\mathfrak{q}) = \alpha_{\mathfrak{qp}}g(\mathfrak{p}),\;\;
		g(\mathfrak{s}) = \beta_{\mathfrak{sp}}g(\mathfrak{p})
	\end{equation*}
	for edges $\mathfrak{qp} \in E_1^{+}$ and $\mathfrak{sp} \in E_2^{+}$,
	and let $G$ be the admissible gauge defined by
	\begin{equation}\label{eq:gauge-tim-andrew}
		G := g \matrix{\frac{\sqrt{s}}{\sqrt{i}}}{0}{0}{\frac{\sqrt{i}}{\sqrt{s}}},
	\end{equation}
  applied to a parallel frame for \eqref{eq:connection-CGC}.
	This produces a new Lax pair
		\begin{equation*}
		 \mathcal{L}_{\mathfrak{qp}}=G(\mathfrak{q}) L_{\mathfrak{qp}} G(\mathfrak{p})\inv = \matrix{\vfrak\frac{\sqrt{s(\mathfrak{q})}}{\sqrt{s(\mathfrak{p})}}}{i(e^{t} - s(\mathfrak{p}) s(\mathfrak{q})e^{-t})}
			{i(e^{t} - \frac{e^{-t}}{s(\mathfrak{p}) s(\mathfrak{q})})}{\bar{\vfrak}\frac{\sqrt{s(\mathfrak{p})}}{\sqrt{s(\mathfrak{q})}}},\quad 
		\end{equation*}
		\begin{equation*}
			\mathcal{M}_{\mathfrak{sp}}=G(\mathfrak{s}) M_{\mathfrak{sp}} G(\mathfrak{p})\inv = \matrix{\hfrak\frac{\sqrt{s(\mathfrak{s})}}{\sqrt{s(\mathfrak{p})}}}{i(e^{t} - s(\mathfrak{p}) s(\mathfrak{s})e^{-t})}
			{i(e^{t} - \frac{e^{-t}}{s(\mathfrak{p}) s(\mathfrak{s})})}{\bar{\hfrak}\frac{\sqrt{s(\mathfrak{p})}}{\sqrt{s(\mathfrak{s})}}}.
		\end{equation*}
    Choosing $\delta_{(1)}$, $\delta_{(2)}$, $\ell$, $m$ such that
   \begin{subequations} \label{eq:delta}
		\begin{equation} 
     \sin \delta_{(1)}=\frac{2}{\alpha|_{t=0}},\;\;\sin\delta_{(2)}=\frac{2}{\beta|_{t=0}},
		\end{equation}
		\begin{equation}
		 \ell=\frac{\vfrak}{u\inv \cot \frac{\delta_{(1)}}{2}+ u \tan\frac{\delta_{(1)}}{2}},
		 \;\;m=\frac{\hfrak}{v\inv \cot \frac{\delta_{(2)}}{2} + v \tan\frac{\delta_{(2)}}{2}}
	 \end{equation}
	\end{subequations}
	yields the Lax pair \eqref{eq:Lax}. 
	 Since $\alpha(t)$ and $\beta(t)$ coincide on opposite edges for all $t$, the same is so for $\sin \delta_{(1)}$ and $\sin \delta_{(2)}$. 
	 One can find that 
   if, for $i=1$ or $i=2$, $|\sin \delta_{(i)}|\leq1$, then $\ell$ or $m$ is unitary, respectively. If $|\sin \delta_{(i)}|>1$, the absolute values of $\ell$ and $m$ are given, like in Theorem 5 of \cite{HS}, by
  $$|\ell|^2=\frac{u \cot\frac{\delta_{(1)}}{2} + u \inv \tan \frac{\delta_{(1)}}{2}}{u \tan\frac{\delta_{(1)}}{2}+u\inv \cot \frac{\delta_{(1)}}{2}},\;\;|m|^2=\frac{v \cot\frac{\delta_{(2)}}{2}+v\inv \tan \frac{\delta_{(2)}}{2}}{v \tan\frac{\delta_{(2)}}{2}+v\inv \cot \frac{\delta_{(2)}}{2}}.$$
  \end{proof}

\section{Circular nets of revolution with constant Gaussian curvature}
 In \cite{SN}, a way of finding explicit parametrizations of rotational
 symmetric circular nets with constant Gaussian curvature was 
 introduced.
 In this section, we will review those parametrizations,
 which use discrete trigonometric functions and 
 discrete hyperbolic functions satisfying particular difference equations.
 Then, invoking Jacobi elliptic functions, we discuss the singularities of  
 those nets. This section lays groundwork for investigating the corresponding flat connections in Section 3.
 
\subsection{Circular nets of revolution}
 \begin{definition} \label{definition:r-net}
   A map $\x : V \to \Rr^3$ is called a \textbf{net of revolution} if  
   the motion $\x(j,k) \mapsto \x(j,k+1)$ (or $\x(j,k) \mapsto \x(j+1,k)$) represents a rotation about an axis $L$ 
   with rotation angle $\theta \in (-\pi,0) \cup (0,\pi)$ for all $(j,k)$.
   Furthermore, a contact element net $(\x,\n)$ is called a \textbf{contact element net of revolution (r-net)} if  
   additionally $\n(j,k) \mapsto \n(j,k+1)$ (or $\n(j,k) \mapsto \n(j+1,k)$, respectively)
   represents the rotation of the same angle $\theta$ about an axis
   through the origin parallel to $L$.
 \end{definition}

  \begin{lemma}\label{lemma:circular net of revolution}
    Assume that $\x$ is a cc-net and a net of revolution in the $k$ direction. 
    Then $\x$ is locally embedded
    if and only if the image of the map
    $$\x(\cdot,k):j \mapsto \x(j,k)$$ 
    is a planar curve whose edges neither vanish nor intersect the rotational axis. 
  \end{lemma}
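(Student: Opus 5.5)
We plan to work face by face. Fix a face with vertices $\x(j,k)$, $\x(j+1,k)$, $\x(j+1,k+1)$, $\x(j,k+1)$. Let $R$ denote the rotation about $L$ by angle $\theta\in(-\pi,0)\cup(0,\pi)$, so that $\x(j,k+1)=R\,\x(j,k)$ and $\x(j+1,k+1)=R\,\x(j+1,k)$. The face is then an isosceles trapezoid-like quadrilateral symmetric with respect to the plane bisecting the rotation. Write $P=\x(j,k)$ and $Q=\x(j+1,k)$. In cylindrical coordinates about $L$, set $P=(r_1,\varphi_1,z_1)$ and $Q=(r_2,\varphi_2,z_2)$. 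Then $RP=(r_1,\varphi_1+\theta,z_1)$ and $RQ=(r_2,\varphi_2+\theta,z_2)$.

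\emph{Step 1: concircularity forces planarity of the meridian edge.} Four points $P,Q,RQ,RP$ with $P\ne RP$ are automatically symmetric under the reflection $S$ in the plane $\Pi$ through $L$ at angle $\varphi_1+\theta/2$, provided $\varphi_2\equiv\varphi_1$. In general, concircularity of $P,Q,RQ,RP$ is a single real condition. We will compute it, for instance by requiring the four points to be coplanar and to satisfy the Ptolemy equality, or by asking that the cross ratio in $\Hh$ be real. We expect it to reduce to $\sin(\varphi_2-\varphi_1)=0$ together with non-degeneracy. The cleaner route is to take the perpendicular bisector planes of $P,RP$ and of $Q,RQ$. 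Both contain $L$, and they are the planes at angles $\varphi_1+\theta/2$ and $\varphi_2+\theta/2$. The center of a circle through all four points lies on both planes, and also on the bisector plane of $P,Q$.

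Coplanarity is the key constraint. The chords $P\,RP$ and $Q\,RQ$ are both perpendicular to $L$, lying in horizontal planes. If $z_1\ne z_2$, these two horizontal chords must be parallel for a planar quadrilateral, which forces $\varphi_2\equiv\varphi_1 \pmod \pi$. If $z_1=z_2$, all four points lie in one horizontal plane, and a circle through them must be centered on $L$ unless $r_1=r_2$. We expect this degenerate case to violate either local embeddedness or the cc-net condition with nonvanishing edges, and we will handle it separately. Thus $\varphi_2-\varphi_1\in\{0,\pi\}$. Inductively in $j$, the whole curve $\x(\cdot,k)$ lies in a single plane containing $L$, the union of two opposite half-planes.

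\emph{Step 2: embeddedness criterion.} With $P$ and $Q$ in the same meridian plane, the face is an isosceles trapezoid with parallel sides $P\,RP$ and $Q\,RQ$. Its cyclic order on the circle is either $P,Q,RQ,RP$, which is embedded, or crossed, which is a non-embedded "bow-tie". The quadrilateral is crossed exactly when $P$ and $Q$ lie on opposite sides of $L$, i.e.\ $\varphi_2=\varphi_1+\pi$, so that the segment $PQ$ meets $L$. It is also crossed when $P$ or $Q$ lies on $L$ itself, where it degenerates. We will verify this via the sign of the cross ratio \eqref{eq:embeddedness}. Using quaternions with $L$ along $-i\boldsymbol{\sigma}_3$, the rotation is conjugation by $e^{-i\boldsymbol{\sigma}_3\theta/2}$. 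A direct computation gives the cross ratio as a real multiple of $-r_1r_2\cos(\varphi_2-\varphi_1)$ times positive factors. This is negative iff $\cos(\varphi_2-\varphi_1)=1$ and $r_1r_2>0$, i.e.\ iff the edge $PQ$ does not meet $L$. Nonvanishing of $\delta_j\x$ is needed both for the cross ratio to be defined and to exclude $P=Q$.

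The main obstacle is Step 1: carefully ruling out the non-planar configurations, including the horizontal degenerate case, using only concircularity. We would first try the explicit cross-ratio computation in $\Hh$. Realness of the cross ratio characterizes concircularity and yields both directions simultaneously.
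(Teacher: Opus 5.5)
Your setup and Step 2 match the paper's proof. You put $L$ on the $-i\boldsymbol{\sigma}_3$-axis, your offset $\varphi_2-\varphi_1$ plays the role of the paper's $\tau$, and you read the sign off the cross ratio of the meridian isosceles trapezoid. That cross ratio is $-\norm{P-Q}^2/(4fg\sin^2\frac{\theta}{2})$ for signed radii $f,g$, so your sign claim is right.

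The gap is the case $z_1=z_2$, which you postpone, and your plan for it cannot work. First, the logic there is reversed. If $\varphi_2\not\equiv\varphi_1 \pmod{\pi}$, the perpendicular bisectors of the chords $P\,RP$ and $Q\,RQ$ in the plane $z=z_1$ are distinct lines through $L$. So any circle through the four points must be centered on $L$, which forces $r_1=r_2$. Second, this configuration really is concircular with nonvanishing edges, since all four points lie on one horizontal circle about $L$. For instance, $\x(j,k)=(r\cos(\tau j+\theta k),r\sin(\tau j+\theta k),0)$ with $\tau\notin\pi\Zz$ is a cc-net of revolution whose profile does not lie in a plane containing $L$. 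Hence neither concircularity (realness of the cross ratio) nor nonvanishing of edges can exclude it. This contradicts your closing remark that realness of the cross ratio ``yields both directions simultaneously.''

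What is needed in that case is the sign of the cross ratio, i.e. the embeddedness hypothesis itself. Place the four points on one circle at angles $\varphi_1,\varphi_2,\varphi_2+\theta,\varphi_1+\theta$. Then $\mathrm{cr}(P,Q,RQ,RP)=\frac{\sin^2\frac{\varphi_2-\varphi_1}{2}}{\sin^2\frac{\theta}{2}}>0$. The rotation preserves the cyclic orientation of the circle, so the quadrilateral $P,Q,RQ,RP$ is always crossed and never locally embedded.

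This is exactly the step the paper carries out. Realness of the cross ratio gives $h\sin\tau=(h^2+f_1^2-f^2)\sin\tau=0$, so $\sin\tau\neq 0$ forces $h=0$ and $f_1^2=f^2$. The inequality $ff_1((f^2+h^2+f_1^2)\cos\tau-2ff_1)>0$ then fails because $|\cos\tau|<1$.

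Two additions complete your argument. One is this case. The other is the remark that $r_1=0$ or $r_2=0$ makes a $k$-edge vanish, so the cross ratio is undefined while $PQ$ meets $L$.
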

  \begin{proof}
    Recalling Definition \ref{definition:c-net} of local embeddedness in Equation \eqref{eq:embeddedness}, all of its edges must be non-zero. 
    Up to rigid motion, we may assume that the axis of rotation is the $-i \boldsymbol{\sigma}_3$-axis.
    Without loss of generality, for one arbitrary quadrilateral we may assume
    $$\x(j,k)=(f,0,0),\;\;\x(j,k+1)=(f \cos \theta,f \sin \theta, 0),$$
    $$\x(j+1,k+1)=(f_1 \cos (\theta + \tau),f_1 \sin (\theta +\tau), h),\;\;\x(j+1,k)=(f_1 \cos \tau,f_1 \sin \tau, h)$$
    for some $f,f_1,h,\theta,\tau \in \Rr$. 
    Then 
    $$\text{cr}(\x(j,k), \x(j+1,k), \x(j+1,k+1), \x(j,k+1))<0$$
    is equivalent to $f\neq0$ and $f_1\neq0$, and
    \begin{equation*}
    h\sin \tau = (h^2+f_1^2-f^2)\sin \tau = 0,\;\; f f_1((f^2+h^2+f_1^2)\cos \tau - 2ff_1)>0.
    \end{equation*}
    These relations cannot hold if $\sin \tau \neq 0$, 
    because then $h=0$ and with $|\cos \tau| < 1$ we have a contradiction.
    Therefore we may assume $\sin \tau = 0$ and the relations simplify to
    \begin{equation*}
     \cos \tau =\pm 1,\;\;
     \text{sign}(f_1)=\text{sign}(f \cos \tau),\;\; \text{and}\;\;
     f_1 \neq f \cos \tau\;\; \text{or}\;\; h\neq 0.
    \end{equation*}
    Therefore, $\x(j,k)$ and $\x(j+1,k)$ lie on the plane $\text{span}\{-i \boldsymbol{\sigma}_1, -i \boldsymbol{\sigma}_3\}$ and the edge 
    connecting them is nonvanishing and does not intersect the $-i \boldsymbol{\sigma}_3$-axis. 
    Continuing this argument inductively to other values of $j$, 
    we find that $\x(\cdot,k)$ satisfies the conclusion of the lemma.
  \end{proof}
 
  Using Lemma \ref{lemma:circular net of revolution} and the definitions of c-nets (Definition \ref{definition:c-net}) and r-nets (Definition \ref{definition:r-net}), we have the following proposition.

  \begin{proposition} \label{proposition:rotational surface}
    Let $(\x,\n)$ be an r-net in the $k$ direction.
    Then $(\x,\n)$ is a c-net and $\x$ is locally embedded if and only if, up to rigid motion, $(\x,\n)$ can be parametrized by
    \begin{subequations} \label{eq:surf and normal}
     \begin{equation} \label{eq:surf}
      \x(j,k)=\Bigr(f(j)\cos \theta k, f(j)\sin \theta k, h(j)\Bigr),
     \end{equation}
     \begin{equation} \label{eq:normal}
      \n(j,k)=\Bigr(a(j)\cos \theta k, a(j)\sin \theta k, b(j)\Bigr), 
     \end{equation}
    \end{subequations}
   for $\theta \in (-\pi,0) \cup (0,\pi)$ and discrete functions $f$, $h$, $a$, and $b$
   satisfying
   \begin{subequations} \label{eq:embedded}
   \begin{equation} \label{eq:embeddeda}
   f(j)\neq 0,\;\; 
   \text{sign}(f(j+1)) = \text{sign}(f(j)),\;\;\text{and}\;\;
   \delta f(j) \neq 0\;\; \text{or} \;\; \delta h(j) \neq 0,
   \end{equation}
   \begin{equation} \label{eq:embeddedb}
    a(j)^2 + b(j)^2 = 1,\;\; \text{and}\;\;\sigma a(j) \neq 0\;\;  \text{or} \;\; \sigma b(j) \neq 0,
   \end{equation}
   \begin{equation}\label{eq:embeddedc}
     \vector{\delta f(j)}{\delta h(j)} \perp \vector{\sigma a(j)}{\sigma b(j)},\;\;\text{and}\;\;
     \vector{\delta f(j)}{\delta h(j)} \parallel \vector{\delta a(j)}{\delta b(j)}.
   \end{equation}
  \end{subequations}
  \end{proposition}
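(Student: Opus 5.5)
We prove the two implications separately, working throughout up to a rigid motion.

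\textbf{Forward direction.} Assume $(\x,\n)$ is a c-net, $\x$ is locally embedded, and $(\x,\n)$ is an r-net in the $k$ direction.

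First apply a rigid motion so that the rotation axis $L$ becomes the $-i\boldsymbol{\sigma}_3$-axis. Since $\n$ rotates about the parallel axis through the origin, the same rotation acts on $\n$, and the map $k\mapsto k+1$ becomes rotation by $\theta$ about the third coordinate axis for both $\x$ and $\n$.

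By Lemma \ref{lemma:circular net of revolution}, each curve $j\mapsto \x(j,k)$ is planar and lies in a plane through the axis. Its edges are nonzero and do not meet the axis. Hence, after a further rotation about the axis, $\x(j,0)=(f(j),0,h(j))$, and rotating gives \eqref{eq:surf}. The proof of that lemma also yields:
\begin{itemize}
\item $f\neq 0$;
\item $\text{sign} f(j+1)=\text{sign} f(j)$, because $\cos\tau=1$ is forced once we stay in the same half-plane; the case $\cos\tau=-1$ with a sign flip describes the same plane, and we normalize to a single half-plane, which is allowed since the edge does not cross the axis;
\item $(\delta f,\delta h)\neq 0$.
\end{itemize}
Together these give \eqref{eq:embeddeda}.

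For $\n$, first note that the $\Rr^3$ component of a rotated vector is determined by $\n(j,0)$. We need $\n(j,0)$ to lie in the plane $\text{span}\{-i\boldsymbol{\sigma}_1,-i\boldsymbol{\sigma}_3\}$. Use the c-net condition (2) in the $k$ direction: $\delta_k\x\parallel\delta_k\n$. Here $\delta_k\x(j,0)=f(j)(\cos\theta-1,\sin\theta,0)$, a horizontal vector perpendicular to the meridian plane direction bisecting the angle. Write $\n(j,0)=(a,c,b)$; then $\delta_k\n$ is the analogous rotation difference of $(a,c)$. Parallelism of $(\cos\theta-1,\sin\theta)$ with $R_\theta(a,c)-(a,c)$ forces $c=0$, since the rotation difference of $(a,c)$ equals the rotation difference of $(1,0)$ rotated by the argument of $(a,c)$, and $\theta\notin\{0,\pi\}$. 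This gives \eqref{eq:normal} with $a^2+b^2=1$.

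The $j$-direction conditions now come from the definitions applied in the meridian plane $k=0$:
\begin{itemize}
\item the ec-condition $\scal{\delta_j\x,\sigma_j\n}=0$ gives the orthogonality in \eqref{eq:embeddedc};
\item condition (2) in the $j$ direction gives the parallelism in \eqref{eq:embeddedc};
\item $\sigma_j\n\neq 0$ gives \eqref{eq:embeddedb}.
\end{itemize}

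\textbf{Converse.} Given the parametrization \eqref{eq:surf and normal} satisfying \eqref{eq:embedded}, we verify each defining property in turn.
\begin{itemize}
\item It is an r-net by construction.
\item Each face is an isosceles trapezoid symmetric under the reflection exchanging $k$ and $k+1$, so it is concyclic and $\x$ is a cc-net.
\item Local embeddedness follows from Lemma \ref{lemma:circular net of revolution}, using the sign condition and the fact that edges are nonzero.
\item In the $k$ direction, $\delta_k\x\parallel\delta_k\n$ because both are multiples of $(\cos\theta(k+1)-\cos\theta k,\ \sin\theta(k+1)-\sin\theta k,\ 0)$. Moreover $\scal{\delta_k\x,\sigma_k\n}=0$, since this horizontal chord direction is orthogonal to the horizontal bisector direction and to the vertical direction.
\item In the $j$ direction, the ec and parallel conditions reduce to \eqref{eq:embeddedc} after rotating by $\theta k$.
\item The nondegeneracy conditions \eqref{eq:degenerate1} follow from \eqref{eq:embeddeda} and \eqref{eq:embeddedb}, together with $f\neq0$, $\theta\neq0$, and $\sigma_k\n\neq 0$, the last holding because $\theta\neq\pm\pi$.
\end{itemize}

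\textbf{Main obstacle.} The delicate step is the face nondegeneracy \eqref{eq:degenerate2}. I would compute the diagonals of a trapezoidal face explicitly. Their span contains a horizontal vector and a meridional vector, and these must not be contained in a plane orthogonal to $N$. I would check this using the local embeddedness, which gives a nondegenerate trapezoid, together with the definition of $N$ from the normal diagonals.
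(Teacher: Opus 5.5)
Your route matches the paper's proof, which only invokes Lemma~\ref{lemma:circular net of revolution} and the definitions of c-nets and r-nets. Your forward direction is fine, as are your converse checks of the cc-net, ec, edge-parallelism and edge-nondegeneracy conditions.

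The genuine gap is the face condition \eqref{eq:degenerate2}. You leave it unproved, and the criterion you sketch for it is backwards. Write $D_1=\x(j+1,k+1)-\x(j,k)$ and $D_2=\x(j+1,k)-\x(j,k+1)$. The condition $\det(D_1,D_2,N)\neq 0$ says that $N$ does \emph{not} lie in the plane spanned by the diagonals. Diagonals lying in $N^{\perp}$ is therefore the best case, not the one to exclude. In fact that is exactly what happens here, so trying to show that your horizontal and meridional vectors are not contained in $N^{\perp}$ would fail.

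To close it, use the edge parallelism you have already established: $\delta_k\n\parallel\delta_k\x$ (both horizontal chords), and $\delta_j\n\parallel\delta_j\x$ by \eqref{eq:embeddedc}. Hence every edge of the $\n$-quadrilateral, and so each of its diagonals, is orthogonal to the unit normal $\nu$ of the planar $\x$-face. So $N=\pm\nu$ is admissible, and it is the only choice when the two $\n$-diagonals are independent. When they are dependent, namely when $\sigma a(j)=0$ or $\delta a(j)=\delta b(j)=0$, $N$ is not unique, and \eqref{eq:degenerate2} must be read with this choice.

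With $N=\pm\nu$ we get $\det(D_1,D_2,N)=\pm\norm{D_1\times D_2}$. After rotating about the axis by $-\theta(k+\frac12)$, one finds
\[
D_1\times D_2=2\bigl(f(j)+f(j+1)\bigr)\sin\tfrac{\theta}{2}\,\bigl(\delta h(j),\,0,\,-\cos\tfrac{\theta}{2}\,\delta f(j)\bigr).
\]
This is nonzero by three facts:
\begin{itemize}
\item the sign condition in \eqref{eq:embeddeda} gives $f(j)+f(j+1)\neq 0$;
\item $\theta\in(-\pi,0)\cup(0,\pi)$ gives $\sin\frac{\theta}{2}\cos\frac{\theta}{2}\neq 0$;
\item $(\delta f,\delta h)\neq 0$.
\end{itemize}
This is precisely where $f(j+1)\neq -f(j)$ is needed, consistent with the remark following the proposition.
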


   \begin{definition} \label{definition:rc-net}
    We call a c-net $(\x,\n)$ parametrized by \eqref{eq:surf and normal} satisfying \eqref{eq:embedded}, up to a rigid motion,
    a \textbf{c-net of revolution (rc-net)}.
  \end{definition}

  \begin{remark} 
  In Proposition \ref{proposition:rotational surface}, 
  the conditions \eqref{eq:embedded} impose local embeddedness. 
 If we have a c-net $(\x,\n)$ parametrized by 
  \eqref{eq:surf and normal} without imposing local embeddedness,
   then $f$, $h$, $a$, and $b$ need satisfy only \eqref{eq:embeddedb}, \eqref{eq:embeddedc}, and $f(j) \neq 0$, and
   \begin{equation} \label{eq:non-degenerate}
   f(j+1) \neq -f(j),\;\;\text{and}\;\;
   \delta f(j) \neq 0\;\; \text{or} \;\; \delta h(j) \neq 0,
   \end{equation}
  where the new condition $f(j+1) \neq -f(j)$ comes from the non-degeneracy \eqref{eq:degenerate2} for faces of $\x$.
  \end{remark}

  From \eqref{eq:embeddedc} and $\vector{\sigma a}{\sigma b} \perp \vector{\delta a}{\delta b}$, we can check the following lemma.

  \begin{lemma} \label{lemma:profile}
    For an rc-net $(\x,\n)$ as in \eqref{eq:surf and normal},
    there exists a sequence $\{c(j)\}$ of nonzero terms which satisfies
    \begin{equation}\label{eq:edge1}
      \vector{\delta f(j)}{\delta h(j)}=c(j)\vector{\sigma b(j)}{-\sigma a(j)}=\matrix{0}{c(j)}{-c(j)}
      {0}\vector{\sigma a(j)}{\sigma b(j)}.
    \end{equation}
  \end{lemma}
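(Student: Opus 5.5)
The plan is a planar linear-algebra argument carried out edge by edge, for a fixed $j$. Set
\[
X=\vector{\delta f(j)}{\delta h(j)},\quad S=\vector{\sigma a(j)}{\sigma b(j)},\quad D=\vector{\delta a(j)}{\delta b(j)}.
\]
Proposition \ref{proposition:rotational surface} supplies three facts. First, \eqref{eq:embeddeda} gives $X\neq 0$. Second, \eqref{eq:embeddedb} gives $S\neq 0$. Third, \eqref{eq:embeddedc} gives $X\perp S$ and $X\parallel D$.

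Since $X$ and $S$ are nonzero vectors in $\Rr^2$ with $X\perp S$, $X$ lies in the one-dimensional orthogonal complement of $S$. That complement is spanned by $JS=\vector{\sigma b(j)}{-\sigma a(j)}$, where $J=\matrix{0}{1}{-1}{0}$, and $JS\neq 0$ because $J$ is a rotation. Hence there is a unique real number $c(j)$ with $X=c(j)JS$, and $c(j)\neq 0$ because $X\neq 0$. Writing $c(j)JS$ as the matrix $\matrix{0}{c(j)}{-c(j)}{0}$ applied to $S$ gives the second equality in \eqref{eq:edge1} immediately.

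The statement also points to the identity $S\perp D$, which I will record as a consistency check. It follows from $a^2+b^2=1$, since
\[
\sigma a\,\delta a+\sigma b\,\delta b=\delta(a^2+b^2)=0.
\]
This shows $D$ lies in the same line $\mathrm{span}\{JS\}$, so the parallelism condition $X\parallel D$ in \eqref{eq:embeddedc} is compatible with $X\perp S$. It is not needed to produce $c(j)$, except in the following sense: if one only had $X\parallel D$ with $D\neq 0$, the identity $S\perp D$ would still force $X\perp S$.

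There is no real obstacle here. The only point needing care is the nonvanishing of $X$ and $S$, which comes directly from \eqref{eq:embeddeda} and \eqref{eq:embeddedb}. Doing this construction for each $j$ produces the sequence $\{c(j)\}$.
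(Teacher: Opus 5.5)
Your proof is correct and is essentially the paper's argument. The paper only says the lemma follows from \eqref{eq:embeddedc} together with $\vector{\sigma a}{\sigma b}\perp\vector{\delta a}{\delta b}$, and you spell out the planar linear algebra and the nonvanishing from \eqref{eq:embeddeda} and \eqref{eq:embeddedb} that gives $c(j)\neq 0$. You are also right that $S\perp D$ is not needed for the existence of $c(j)$: only $X\perp S$ and $X,S\neq 0$ are used.
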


  \subsection{rc-nets with constant Gaussian curvature} 
   One can check that, for rc-nets $(\x,\n)$ as in \eqref{eq:surf and normal} and $c(j)$ as in \eqref{eq:edge1}, 
   the Gaussian curvature is 
   \begin{equation} \label{eq:gauss}
    K(j)=\frac{a(j+1)^2-a(j)^2}{f(j+1)^2-f(j)^2}=-\frac{\delta b(j)}{c(j)\sigma f(j)}.
   \end{equation}

   Equation \eqref{eq:gauss} implies the following lemma.

  \begin{lemma} \label{lemma:gauss}
    Let $(\x,\n)$ be an rc-net as in \eqref{eq:surf and normal}. 
    If $K\equiv 1$, then
    there exists $\kappa > 0$ such that for all
    $j$, 
    \begin{equation} \label{eq:pgauss}
      f(j)^2-a(j)^2=f(j)^2+b(j)^2-1=\kappa^2-1.
    \end{equation}
    Rather, if $K\equiv -1$, then
    there exists $\kappa > 0$ such that for all
    $j$, 
    \begin{equation} \label{eq:ngauss}
      f(j)^2+a(j)^2=f(j)^2-b(j)^2+1=\frac{1}{\kappa^2}.
    \end{equation}
  \end{lemma}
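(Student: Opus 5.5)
The argument rests on the first expression for the curvature in \eqref{eq:gauss}, namely
\[
K(j)=\frac{a(j+1)^2-a(j)^2}{f(j+1)^2-f(j)^2},
\]
which is taken as given from the preceding discussion. The plan is to clear the denominator and read off a telescoping relation: in both cases a certain combination of $f^2$ and $a^2$ is invariant under $j\mapsto j+1$.

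\textbf{Step 1 (the denominator is nonzero).} By \eqref{eq:embeddeda}, $f(j)$ and $f(j+1)$ are nonzero and have the same sign. Hence $f(j+1)^2=f(j)^2$ would force $f(j+1)=f(j)$. We rule this out as follows.

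Suppose $\delta f(j)=0$. By \eqref{eq:edge1}, $c(j)\,\sigma b(j)=\delta f(j)=0$, and since $c(j)\neq0$ this gives $\sigma b(j)=0$. By \eqref{eq:embeddedb}, $\sigma a(j)\neq 0$, because $\sigma a(j)$ and $\sigma b(j)$ cannot both vanish.

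Next, $b(j+1)=-b(j)$ together with $a^2+b^2=1$ gives $a(j+1)^2=a(j)^2$. Since $\sigma a(j)\neq0$, we get $a(j+1)=a(j)$. Then $(\delta a,\delta b)=(0,-2b(j))$.

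The parallelism condition in \eqref{eq:embeddedc} then makes the curvature formula degenerate. In this situation I would argue that the quotient formula for $K$ is simply not the relevant one. Instead I would use the second expression $K=-\delta b/(c\,\sigma f)$ from \eqref{eq:gauss}. It is well defined because $\sigma f\neq0$ by the sign condition.

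\textbf{Step 2 (the main case).} Where $f(j+1)^2\ne f(j)^2$, set $K\equiv1$ and clear denominators. This gives
\[
f(j+1)^2-a(j+1)^2=f(j)^2-a(j)^2 .
\]
So $f^2-a^2$ is constant in $j$. Call this constant $C$, so $f^2-a^2=C$ for all $j$.

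Rewriting with $a^2=1-b^2$ gives
\[
f^2-a^2=f^2+b^2-1=C.
\]
Since $f^2+b^2>0$, we have $C>-1$. Therefore $C=\kappa^2-1$ for a unique $\kappa>0$, which is \eqref{eq:pgauss}.

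For $K\equiv-1$ the same manipulation gives
\[
f(j+1)^2+a(j+1)^2=f(j)^2+a(j)^2 ,
\]
so $f^2+a^2$ is constant. This constant is positive because $f\neq0$, so we may write it as $1/\kappa^2$ with $\kappa>0$. Substituting $a^2=1-b^2$ gives $f^2-b^2+1$, which is \eqref{eq:ngauss}.

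\textbf{Step 3 (the degenerate edges).} The main obstacle is an edge with $f(j+1)=f(j)$, because there the conservation law does not follow from the quotient. I would handle it with the second form $K=-\delta b/(c\,\sigma f)$.

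From Step 1, on such an edge $a(j+1)=a(j)$, so $a^2$ is unchanged and $f^2$ is unchanged. Hence both $f^2\pm a^2$ are automatically preserved across that edge.

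This means the invariant persists across every edge. That closes the induction over $j$ and gives the lemma in both cases.
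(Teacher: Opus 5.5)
Your proof is correct and follows the paper's route: the paper simply notes that \eqref{eq:gauss} implies the lemma. That is, clearing denominators in the first expression with $K\equiv\pm1$ shows $f^2\mp a^2$ is independent of $j$, and then $a^2+b^2=1$ and $f\neq0$ give the stated forms with $\kappa>0$. Your extra treatment of edges with $f(j+1)=f(j)$, where that quotient is $0/0$ and which the paper passes over, is valid; however, Step~1 does not actually ``rule this out'' as announced, and the detour through $-\delta b/(c\,\sigma f)$ is unnecessary, since $\sigma b(j)=0$ and $a^2+b^2=1$ already give $a(j+1)^2=a(j)^2$ on such an edge.
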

  
   Let us recall the explicit parametrization of rc-nets with constant Gaussian curvature 
   found in \cite{SN}, now reconsidered
   using the $c(j)$ in Lemma \ref{lemma:profile}.

   \begin{theorem} \label{theorem:trig}
     Let $(\x,\n)$ be an rc-net as in \eqref{eq:surf and normal},
     with sequence $\{c(j)\}$ as in \eqref{eq:edge1}.
     Then $K \equiv 1$ if and only if $f(j)$ and $b(j)$ satisfy
     \begin{equation} \label{eq:soltrig}
      \left\{ \,
      \begin{aligned}
       &f(j)=A\cos{\Bigg(\bigg(\sum^{j-1}_{s=0}\theta(s)\bigg)+B\Bigg)},\\
       &b(j)=A\sin{\Bigg(\bigg(\sum^{j-1}_{s=0}\theta(s)\bigg)+B\Bigg)}
      \end{aligned}
      \right.
     \end{equation}
     for some $A$, $B \in \Rr$ and $c(j)$ such that \eqref{eq:soltrig} satisfies $|b(j)|\le 1$ and \eqref{eq:embeddeda} for all $j$, where $\theta(j)$ is determined by
     \begin{equation*}
       \cos{\theta(j)}=\frac{1-{c(j)}^2}{1+{c(j)}^2},\;\;\sin{\theta(j)}=\frac{-2c(j)}{1+{c(j)}^2}.
     \end{equation*}
    \end{theorem}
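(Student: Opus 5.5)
The plan is to turn the curvature condition into a linear first-order recursion for the vector $(f(j),b(j))^{T}$ and to recognize that recursion as a rotation written through a Cayley transform. By Lemma \ref{lemma:profile} we have $\delta f(j)=c(j)\,\sigma b(j)$. By \eqref{eq:gauss}, $K\equiv1$ is equivalent to $\delta b(j)=-c(j)\,\sigma f(j)$, wherever $\sigma f(j)\neq0$, which holds by \eqref{eq:embeddeda}. Let $J=\matrix{0}{1}{-1}{0}$. Then these two relations together read
\begin{equation*}
(\boldsymbol{\sigma}_0-c(j)J)\vector{f(j+1)}{b(j+1)}=(\boldsymbol{\sigma}_0+c(j)J)\vector{f(j)}{b(j)}.
\end{equation*}
Since $\det(\boldsymbol{\sigma}_0-cJ)=1+c^2>0$, this can be solved as $(f,b)^T_{j+1}=R(j)(f,b)^T_j$ with $R(j)=(\boldsymbol{\sigma}_0-cJ)^{-1}(\boldsymbol{\sigma}_0+cJ)$. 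A direct computation gives
\begin{equation*}
R(j)=\frac{1}{1+c^2}\matrix{1-c^2}{2c}{-2c}{1-c^2}.
\end{equation*}
This is the rotation by the angle $\theta(j)$ defined in the statement. I will check the sign convention of $\sin\theta(j)=-2c/(1+c^2)$ against the form $(A\cos\phi,A\sin\phi)$ carefully, since that is where an error is most likely.

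For the forward direction, $R(j)$ is orthogonal, so $f^2+b^2$ is constant; this is consistent with \eqref{eq:pgauss}. Write $(f(0),b(0))=(A\cos B,A\sin B)$. Iterating the rotations then yields \eqref{eq:soltrig}, with phase $\sum_{s=0}^{j-1}\theta(s)+B$. The constraints $|b|\le1$ (needed because $a^2+b^2=1$) and \eqref{eq:embeddeda} are inherited from the rc-net hypothesis.

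For the converse, start from \eqref{eq:soltrig} with the given $c(j)$. The rotation identity run backwards shows that $\delta f=c\,\sigma b$ and $\delta b=-c\,\sigma f$. We must then realize an actual rc-net. Define $a(j)$ by $a^2=1-b^2$, which is possible because $|b|\le1$. Choose the signs of $a(j)$ so that $(\sigma a,\sigma b)\neq0$, and set $h(j+1)=h(j)-c(j)\,\sigma a(j)$. Then $(\delta f,\delta h)=c\,(\sigma b,-\sigma a)$ is perpendicular to $(\sigma a,\sigma b)$. Because $(a,b)$ is a unit vector, $(\delta a,\delta b)$ is also perpendicular to $(\sigma a,\sigma b)$. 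In the plane, two vectors perpendicular to the same nonzero vector are parallel, so \eqref{eq:embeddedc} holds. Proposition \ref{proposition:rotational surface} then gives an rc-net, and \eqref{eq:gauss} gives $K=-\delta b/(c\,\sigma f)=1$.

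The main obstacle I expect is the bookkeeping in the converse. One issue is whether the sign choices for $a$ can always be made so that $(\sigma a,\sigma b)\neq0$ and \eqref{eq:embeddeda} holds; the statement's hypothesis on $c(j)$ should absorb this. The other is making sure that the $c(j)$ produced by Lemma \ref{lemma:profile} for the constructed net is the same $c(j)$ we started with. Both should follow from the explicit construction of $h$.
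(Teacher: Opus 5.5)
Your argument is correct and follows the paper's proof: combining $\delta f=c\,\sigma b$ from Lemma \ref{lemma:profile} with $\delta b=-c\,\sigma f$ from \eqref{eq:gauss} gives exactly the recursion \eqref{eq:trig2}. The sign check you deferred works out, since the $(2,1)$ entry $-2c/(1+c^2)$ of your $R(j)$ equals $\sin\theta(j)$, so $R(j)$ is counterclockwise rotation by $\theta(j)$ and the phase advances by $+\theta(j)$.

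The realization step in your converse is unnecessary. The theorem presupposes that $(\x,\n)$ is already an rc-net with its own $c(j)$, so reversing the rotation and applying \eqref{eq:gauss} already gives $K\equiv1$. As a standalone existence claim that step can also fail. For example, when $b(j)=1$ and $b(j+1)=-1$, you get $a(j)=a(j+1)=0$, which forces $(\sigma a,\sigma b)=0$ whatever signs you choose.
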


\begin{theorem} \label{theorem:hyp}
 For $(\x,\n)$, $c(j)$ like in Theorem \ref{theorem:trig}, and assuming $c(j) \neq \pm 1$,
 $K \equiv -1$ if and only if $f(j)$ and $b(j)$ satisfy 
 \begin{equation}\label{eq:solhyp}
  \left\{ \,
  \begin{aligned}
   f(j)=A\cdot\prod_{s=0}^{j-1}\frac{1-c(s)}{1+c(s)}+B\cdot\prod_{s=0}^{j-1}\frac{1+c(s)}{1-c(s)},  \\
   b(j)=-A\cdot\prod_{s=0}^{j-1}\frac{1-c(s)}{1+c(s)}+B\cdot\prod_{s=0}^{j-1}\frac{1+c(s)}{1-c(s)}
  \end{aligned}
  \right.
 \end{equation}
 for some $A$, $B\in \Rr$ and $c(j)$ such that \eqref{eq:solhyp} satisfies $|b(j)|\le 1$ and \eqref{eq:embeddeda} for all $j$. 
 In particular, when $|c(j)|<1$, we have the following three cases:
 \begin{enumerate}
 \item When $\kappa=1$, or equivalently $f(j)^2-b(j)^2=0$, then $A=0$ or $B=0$.
 \item When $\kappa<1$, or $f(j)^2-b(j)^2>0$, then $AB>0$ (``$+$" and ``$-$" below correspond to $A, B>0$ and $A, B<0$) and 
 \begin{equation}
  \left\{ \,
  \begin{aligned}
   f(j)=\pm 2\sqrt{AB}\bigg(\cosh{\Big(\log{\sqrt{\frac{B}{A}}}+{\sum_{s=0}^{j-1}\log{
   \frac{1+c(s)}{1-c(s)}}}}\Big)\bigg),  \\
   b(j)=\pm 2\sqrt{AB}\bigg(\sinh{\Big(\log{\sqrt{\frac{B}{A}}}+{\sum_{s=0}^{j-1}\log{
   \frac{1+c(s)}{1-c(s)}}}}\Big)\bigg).
  \end{aligned}
  \right.
 \end{equation}
 \item When $\kappa>1$, or $f(j)^2-b(j)^2<0$, then $AB<0$ (``$+$" and ``$-$" below correspond to $A<0$ and $A>0$) and
 \begin{equation}
  \left\{ \,
  \begin{aligned}
   f(j)=\pm 2\sqrt{|AB|}\bigg(\sinh{\Big(\log{\sqrt{\frac{|B|}{|A|}}}+{\sum_{s=0}^{j-1}\log{
   \frac{1+c(s)}{1-c(s)}}}}\Big)\bigg),  \\
   b(j)=\pm 2\sqrt{|AB|}\bigg(\cosh{\Big(\log{\sqrt{\frac{|B|}{|A|}}}+{\sum_{s=0}^{j-1}\log{
   \frac{1+c(s)}{1-c(s)}}}}\Big)\bigg).
  \end{aligned}
  \right.
 \end{equation}
 \end{enumerate}
\end{theorem}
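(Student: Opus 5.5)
Using equation \eqref{eq:gauss} for $K$, I will reduce the condition $K\equiv-1$ to a first-order linear system for the vector $(f(j),b(j))$ and solve it by diagonalizing. Setting $K=-1$ in the second expression of \eqref{eq:gauss} gives $\delta b(j)=c(j)\,\sigma f(j)$. Lemma \ref{lemma:profile} gives $\delta f(j)=c(j)\,\sigma b(j)$, its first component. Together these form a Cayley-type recursion,
\begin{equation*}
\matrix{1}{-c(j)}{-c(j)}{1}\vector{f(j+1)}{b(j+1)}=\matrix{1}{c(j)}{c(j)}{1}\vector{f(j)}{b(j)}.
\end{equation*}
Both matrices have eigenvectors $(1,-1)^T$ and $(1,1)^T$. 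Passing to $p=f-b$ and $q=f+b$ gives $(1+c)p(j+1)=(1-c)p(j)$ and $(1-c)q(j+1)=(1+c)q(j)$. With the assumption $c(j)\neq\pm1$, these solve as $p(j)=2A\prod\frac{1-c(s)}{1+c(s)}$ and $q(j)=2B\prod\frac{1+c(s)}{1-c(s)}$, which is \eqref{eq:solhyp}.

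For the converse I run the same computation backwards. If $f,b$ are given by \eqref{eq:solhyp}, then $\delta b=c\,\sigma f$ and $\delta f=c\,\sigma b$. The second identity must be compatible with the $c(j)$ of Lemma \ref{lemma:profile}, which also involves $a$. I take $a$ to be determined, up to sign, by $a^2=1-b^2$; this is why the hypothesis $|b|\le1$ is needed. Plugging $\delta b=c\,\sigma f$ into \eqref{eq:gauss} then gives $K=-1$.

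The main subtlety is the direction ``\eqref{eq:solhyp}$\Rightarrow K\equiv-1$''. Here one must check that the $c(j)$ in the product formula really is the edge factor of an rc-net with these $f,b$. This means verifying that $\delta h=-c\,\sigma a$ can be used to define $h$, and that $\delta f = c\,\sigma b$ is consistent with $a$ coming from $\n$. The consistency is that $a(j+1)^2-a(j)^2=-(b(j+1)^2-b(j)^2)=-\delta b\,\sigma b=-c\,\sigma f\,\sigma b=-\delta f\,\sigma f=-(f(j+1)^2-f(j)^2)$, which matches the first expression in \eqref{eq:gauss} with $K=-1$. I would present this as the check. In practice I would state the converse as: given $c(j)$ and $A,B$, define $f,b$ by \eqref{eq:solhyp}, define $a$ with $a^2=1-b^2$ and signs chosen so that $\sigma a\neq0$, and define $h$ by summation; the first expression in \eqref{eq:gauss} then gives $K\equiv-1$.

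For the three cases with $|c|<1$, both products are positive. From $p=f-b$ and $q=f+b$ we get $f^2-b^2=pq=4AB$. Comparing with \eqref{eq:ngauss} gives $f^2-b^2=1/\kappa^2-1$, so $\kappa=1$, $\kappa<1$, $\kappa>1$ correspond respectively to $AB=0$, $AB>0$, $AB<0$. Write $P(j)=\prod\frac{1+c(s)}{1-c(s)}=e^{\sum\log\frac{1+c}{1-c}}$.
\begin{itemize}
\item If $AB>0$, factor out $\pm2\sqrt{AB}$ from $f=AP^{-1}+BP$ using $\sqrt{B/A}=e^{\log\sqrt{B/A}}$; this gives $2\cosh$ for $f$ and $2\sinh$ for $b$.
\item If $AB<0$, the same factorization with $|A|,|B|$ swaps $\cosh$ and $\sinh$.
\end{itemize}
In both cases the signs are fixed by the sign of $A$ (equivalently of $B$).
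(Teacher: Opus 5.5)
Your proposal is correct and follows essentially the paper's route. The paper also combines $\delta b=c\,\sigma f$ (from \eqref{eq:gauss} with $K=-1$) with $\delta f=c\,\sigma b$ (from \eqref{eq:edge1}), uses $c(j)\neq\pm1$ to rewrite this as the explicit recursion \eqref{eq:hyp2}, and asserts \eqref{eq:solhyp} as its general solution, handling the three cases ``by direct computation''; your diagonalization via $f\mp b$ and the invariant $f^2-b^2=4AB=1/\kappa^2-1$ just make explicit the steps the paper leaves to the reader.
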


\begin{proof} [Proof of Theorems \ref{theorem:trig} and \ref{theorem:hyp}]
 From \eqref{eq:gauss}, when $K=1$, we need to solve
 \begin{equation} \label{eq:trig}
   \left\{ \,
   \begin{aligned}
    &-c(j)\sigma f(j)=\delta b(j),\\
    &c(j)\sigma b(j)=\delta f(j).
   \end{aligned}
   \right.
 \end{equation}
 Then \eqref{eq:trig} becomes
 \begin{equation} \label{eq:trig2}
   \left\{ \,
   \begin{aligned}
    &f(j+1)=\frac{1-{c(j)}^2}{1+{c(j)}^2}f(j)+\frac{2c(j)}{1+{c(j)}^2}b(j),\\
    &b(j+1)=\frac{-2c(j)}{1+{c(j)}^2}f(j)+\frac{1-{c(j)}^2}{1+{c(j)}^2}b(j).
   \end{aligned}
   \right.
 \end{equation}
 Then we can find that the general solution of \eqref{eq:trig2} is
 \eqref{eq:soltrig} for $A, B\in \Rr$. 
   
 Next we consider the case $K=-1$. Then we need to solve
 \begin{equation}\label{eq:hyp}
  \left\{ \,
  \begin{aligned}
    &c(j)\sigma f(j)=\delta b(j),\\
    &c(j)\sigma b(j)=\delta f(j).
  \end{aligned}
  \right.
 \end{equation}
 By $c(j)\neq \pm 1$, \eqref{eq:hyp} can be written as
 \begin{equation} \label{eq:hyp2}
  \left\{ \,
  \begin{aligned}
    &f(j+1)=\frac{1+{c(j)}^2}{1-{c(j)}^2}f(j)+\frac{2c(j)}{1-{c(j)}^2}b(j),\\
    &b(j+1)=\frac{2c(j)}{1-{c(j)}^2}f(j)+\frac{1+{c(j)}^2}{1-{c(j)}^2}b(j).
  \end{aligned}
  \right.
 \end{equation}
 The general solution of \eqref{eq:hyp2} is as in \eqref{eq:solhyp},
 and the three cases after that follow by direct computation.
\end{proof}

\begin{remark}
 Note that $a(j)$ is determined up to sign in the above theorems, and 
 then $h(j)$ is determined up to a global translation by \eqref{eq:embedded}.
\end{remark}
  
\begin{remark}
 In the smooth case, when we set $c(u)=\pm\sqrt{{f^{\prime}(u)}^2+{h^{\prime}(u)}^2}$, 
 we have
 \begin{equation*} \label{eq:gausssm}
  K(u)=\frac{-b^{\prime}(u)}{c(u)f(u)},\;\;
  c(u)b(u)=f^{\prime}(u),\;\;
  -c(u)a(u)=h^{\prime}(u),
 \end{equation*}
 and Equations \eqref{eq:edge1} and \eqref{eq:gauss} can be regarded as a discretization
 of this.
\end{remark}

\begin{remark}
 In \cite{P.Z}, Equations \eqref{eq:trig2} and \eqref{eq:hyp2} are called 
 the scalar discrete trigonometric system and the scalar discrete 
 hyperbolic system, respectively.
\end{remark}

\begin{remark}
    We can describe $f$, $h$, $a$, $b$ explicitly in terms of 
    Jacobi elliptic functions satisfying 
    $$\text{sn}(u,\kappa)^2+\text{cn}(u,\kappa)^2=1,\;\;
   \text{dn}(u,\kappa)^2+\kappa^2 \text{sn}(u,\kappa)^2=1.$$
    For example, when $K=1$ and
    $$c(j)=\frac{- \text{sn}\big(\frac{\Theta}{2},\kappa \big) \text{dn}\big(\frac{(2 j + 1)\Theta}{2},\kappa\big)}{\text{cn}\big(\frac{\Theta}{2},\kappa\big)},\;\;A=\kappa,\;\;B=0$$
    for some $\Theta$, $\kappa \in \Rr$, we have
    \begin{equation*} 
      f(j)=\kappa\textnormal{cn}(\Theta j,\kappa)
      ,\;\;a(j)=\textnormal{dn}(\Theta j,\kappa)
      ,\;\;b(j)=\kappa\textnormal{sn}(\Theta j,\kappa),
    \end{equation*}
    and $h$ can be described explicitly as a summation in terms of $f$, $a$, and $b$, using Equation \eqref{eq:embeddedc}. 
   
   When $K=-1$ and
   $$c(j)=- \kappa\text{sn}\Big(\frac{\Theta}{2},\kappa\Big) \text{sn}\Big(\frac{(2 j + 1)\Theta}{2},\kappa\Big),\;\;A=\frac{1-\kappa}{2\kappa},\;\;B=\frac{1+\kappa}{2\kappa},$$
   we have 
    \begin{equation*} 
      f(j)=\frac{1}{\kappa}\textnormal{dn}(\Theta j,\kappa)
      ,\;\;a(j)=\textnormal{sn}(\Theta j,\kappa)
      ,\;\;b(j)=\textnormal{cn}(\Theta j,\kappa),
    \end{equation*}
    and similarly to arguments in \cite{KSU} for the asymptotic net case, we can compute
    \begin{equation*}
      h(j)=\kappa\Big(\int_0^{\Theta j} \text{sn}(u,\kappa)^2du-2 j \int_0^{\Theta/2} \text{sn}(u,\kappa)^2du\Big),
    \end{equation*}
    This is useful for understanding the singular behavior of these 
    discrete surfaces of revolution. (For singularities in the case of smooth surfaces, see \cite{UYS}, for example.) 
    For a c-net $(\x,\n)$, the principal curvatures $R$ are defined on edges in \cite{BPW,BHR,RY} as
    \begin{equation*} \label{eq:principal curvature}
      \delta_j \n(j,k)=- R_{(j+1,k)(j,k)} \delta_j \x(j,k),\;\;
      \delta_k \n(j,k)=- R_{(j,k+1)(j,k)} \delta_k \x(j,k).
    \end{equation*}
    Moreover, when $(\x,\n)$ has non-zero constant Gaussian curvature,
    a vertex $\x(j,k)$ is singular if (\cite{RY}, see also \cite{YM})
    \begin{equation*} \label{eq:singular}
      R_{(j,k)(j-1,k)}R_{(j+1,k)(j,k)}\leq 0 \;\;\text{or} \;\; 
      R_{(j,k)(j,k-1)}R_{(j,k+1)(j,k)}\leq 0.
    \end{equation*}
    Using these definitions and Jacobi elliptic functions, we see that singular vertices appear, 
    looking like discrete analogs of the cuspidal edges on the corresponding smooth surfaces,
    see Figure \ref{figure:singular}. For the surfaces in that figure, $\Theta$ is determined as follows:
    let 
    $$ K(\kappa):=\int_{0}^{1}\frac{dt}{\sqrt{(1-t^2)(1-\kappa^2t^2)}}$$
    be the complete elliptic integral of the first kind. When $\kappa=1$, we set $\Theta>0$ to be some positive real number, and
    when $0<\kappa<1$, we set $\Theta=\frac{K(\kappa)}{j_0}$, 
    and when $\kappa>1$, we set $\Theta=\frac{K(\frac{1}{\kappa})}{\kappa j_0}$ for some integer $j_0\geq 2$.
\end{remark}

\begin{figure}[htbp] 
  \begin{flushleft}
  \setlength{\tabcolsep}{0pt}
  \renewcommand{\arraystretch}{0}
  \begin{tabular}{ccc}
  \includegraphics[width=5.0cm, trim=2.1cm 0cm 2.1cm 0cm, clip]{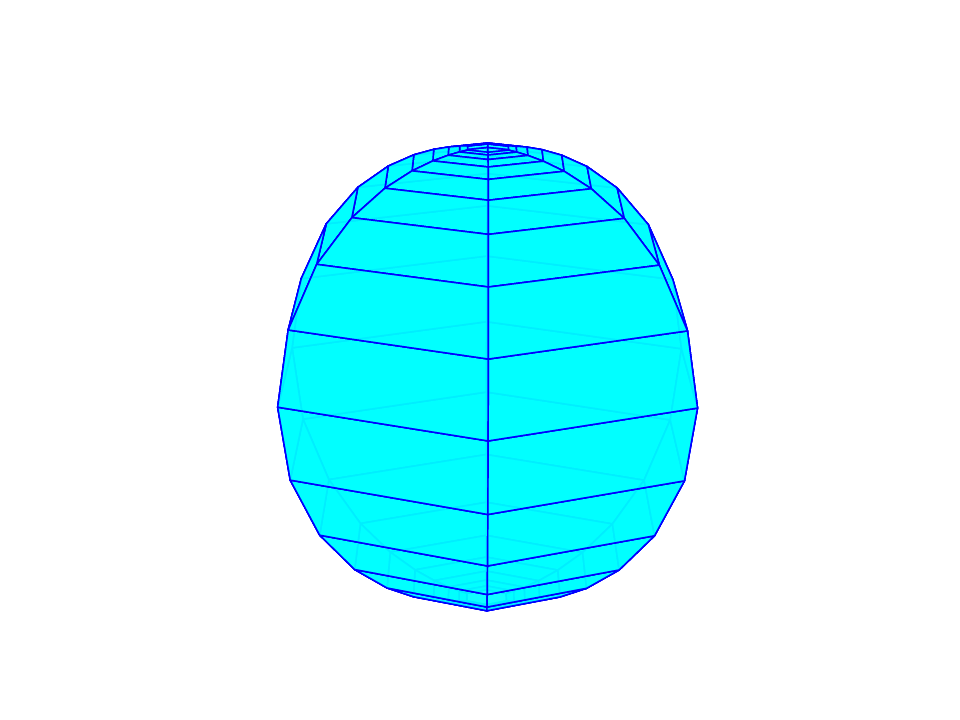} &
  \includegraphics[width=5.0cm, trim=3.5cm 0cm 3.5cm 0cm, clip]{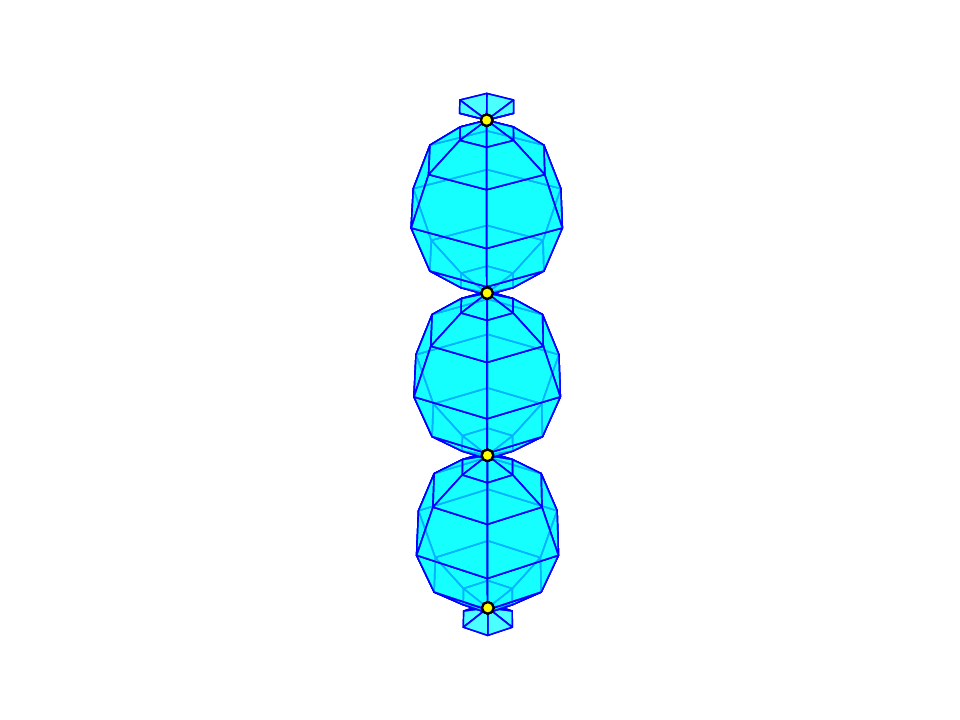} &
  \includegraphics[width=5.0cm, trim=3.5cm 0cm 3.5cm 0cm, clip]{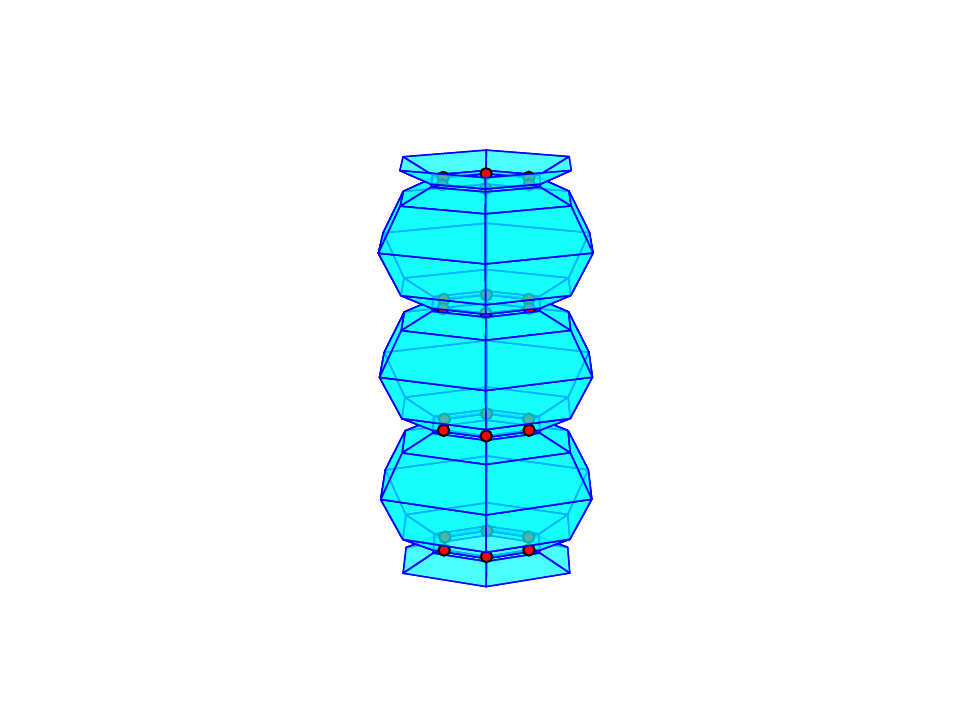} \\
  \noalign{\vspace{-0.5cm}} 
  \includegraphics[width=5.0cm, trim=3.2cm 0cm 3.2cm 0cm, clip]{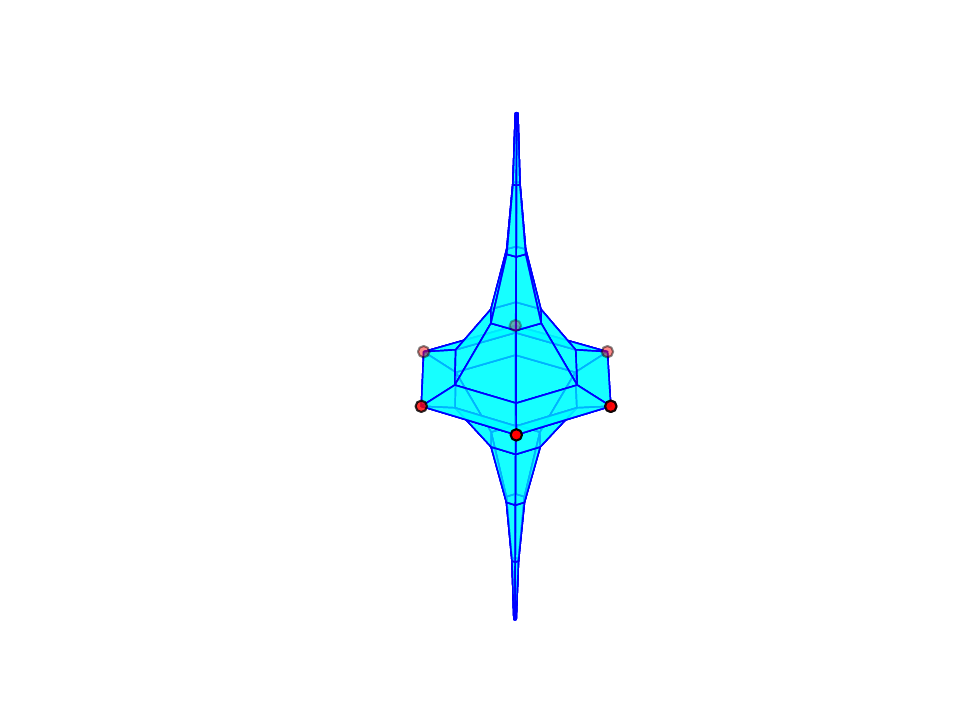} &
  \includegraphics[width=5.0cm, trim=3.2cm 0cm 3.2cm 0cm, clip]{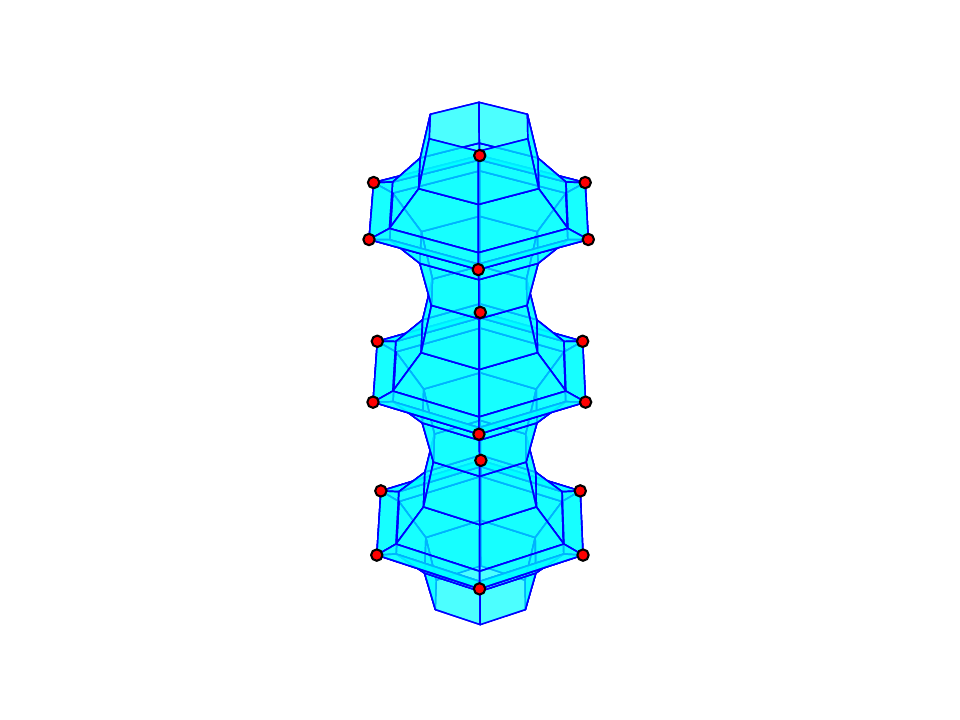} &
  \includegraphics[width=5.0cm, trim=3.0cm 0cm 3.0cm 0cm, clip]{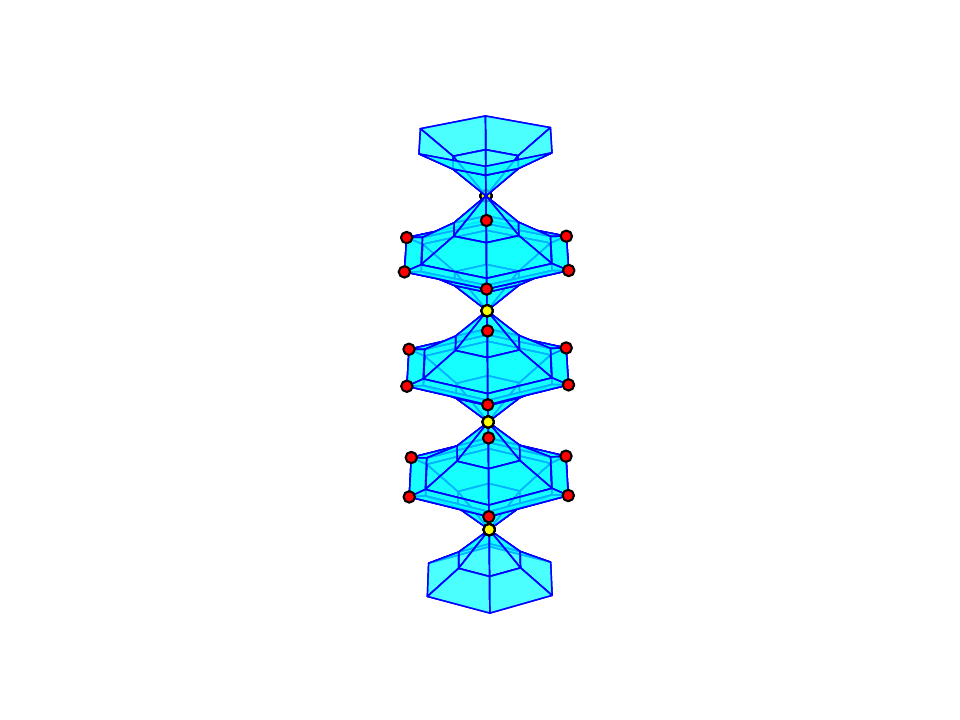}
 \end{tabular}

  \caption{
    Upper row (left–right): The image of $\x$ for rc-nets that have $K\equiv 1$ with degenerate edges and singular vertices, for
    $\kappa=1$, $0<\kappa<1$, $\kappa>1$.
    Lower row (left–right): The image of $\x$ for rc-nets that have $K\equiv -1$ with degenerate edges and singular vertices, again for
    $\kappa=1$, $0< \kappa<1$, $\kappa>1$. 
    Yellow dots indicate degenerate edges in the rotational direction, whereas red dots indicate singular vertices.
  }
  \label{figure:singular}
 \end{flushleft}
\end{figure}

\section{Flat connections for CGC circular nets of revolution}

Let us consider flat connections $\eta$ corresponding to CGC rc-nets $(\x,\n)$ parametrized by 
\eqref{eq:surf and normal}.
The following theorems provide rotationally invariant $\eta$, 
where we say $\eta$ (or a Lax pair $(L,M)$) is \textbf{$k$-invariant}
if, for all $t$ and $(j,k)$, $\eta_{(j+1,k)(j,k)}=\eta_{(j+1,k+1)(j,k+1)}$ and $\eta_{(j,k+1)(j,k)}=\eta_{(j,k+2)(j,k+1)}$, and we can similarly define $j$-invariance.
Given our choices for formulating $\eta$ in Section 1, it is most natural to consider $j$-invariance in the first case of the next theorem, and rather $k$-invariance in the latter two cases.

\begin{theorem} \label{theorem:compatibility}
  We have the following three cases of CGC rc-nets: 
  \begin{enumerate}
  \item For the flat connection $\eta$ in \eqref{eq:connection-CMC},
  if 
  \begin{equation} \label{eq:compa1}
   \begin{aligned}
   & u_{\mathfrak{qp}}\neq - u_{\mathfrak{rs}},\;\;v_{\mathfrak{sp}}=v_{\mathfrak{rq}},\;\;
   v_{\mathfrak{sp}}^2=u_{\mathfrak{qp}}u_{\mathfrak{rs}},\;\;\text{and}\\
   & \hfrak_{\mathfrak{sp}}=\hfrak_{\mathfrak{rq}}=
   \frac{v_{\mathfrak{sp}}(\overline{\vfrak_{\mathfrak{qp}}}-\vfrak_{\mathfrak{rs}})}{i(u_{\mathfrak{qp}}-u_{\mathfrak{rs}})}\in \Rr, \;\;\text{when}\;\;u_{\mathfrak{qp}}\neq u_{\mathfrak{rs}}\;\;\text{or}\\
   & \hfrak_{\mathfrak{sp}}=\hfrak_{\mathfrak{rq}}=\frac{2i(u_{\mathfrak{qp}}v_{\mathfrak{sp}}-u_{\mathfrak{qp}}\inv v_{\mathfrak{sp}}\inv)}{\vfrak_{\mathfrak{qp}}-\vfrak_{\mathfrak{rs}}} \in \Rr,\;\;
   \overline{\vfrak_{\mathfrak{qp}}}=\vfrak_{\mathfrak{rs}},\;\;\vfrak_{\mathfrak{qp}}\neq \vfrak_{\mathfrak{rs}}, \;\;\text{when}\;\;u_{\mathfrak{qp}}= u_{\mathfrak{rs}}
   \end{aligned}
  \end{equation}
  for all faces, and if $\alpha|_{t=0}$ and $\Re \vfrak$ are constants,
  then compatibility \eqref{eq:compatibilitycmc} holds and
  $\eta_{e_2}$ is $j$-invariant. Moreover, when $\eta_{e_1}$ is also $j$-invariant and 
  \begin{equation} \label{eq:angle and embedded1}
    \alpha|_{t=0}=\frac{2}{\sqrt{1-\kappa^2}\sin \frac{\theta}{2}},\;\;
    \Re \vfrak=\frac{2 \cot \frac{\theta}{2}}{\sqrt{1-\kappa^2}},
    \;\;(u_{\mathfrak{qp}}-u_{\mathfrak{qp}}\inv)(u_{\mathfrak{rs}}-u_{\mathfrak{rs}}\inv) >0
  \end{equation}
  for $\theta \in (-\pi,0) \cup (0,\pi)$ and $0<\kappa<1$, then $\eta$ produces a $K \equiv 1$ rc-net as in \eqref{eq:surf and normal} satisfying \eqref{eq:pgauss}, switching the roles of $j$ and $k$, via \eqref{eq:sym-bob} with $t=0$ and $\xi+2=\tau=0$, and 
  every $K\equiv 1$ rc-net with $0<\kappa<1$ and $\text{sign}(a(j+1))=\text{sign}(a(j))$ can be produced this way, up to rigid motion.

  \item For the flat connection $\eta$ in \eqref{eq:connection-CMC},
  if 
  \begin{equation} \label{eq:compa2}
    \begin{aligned}
    & v_{\mathfrak{sp}}\neq - v_{\mathfrak{rq}},\;\;u_{\mathfrak{qp}}=u_{\mathfrak{rs}},\;\;
    u_{\mathfrak{qp}}^2=v_{\mathfrak{sp}}v_{\mathfrak{rq}},\;\;\text{and}\\
    & \vfrak_{\mathfrak{qp}}=\vfrak_{\mathfrak{rs}}=\frac{i u_{\mathfrak{qp}}(\overline{\hfrak_{\mathfrak{sp}}}-\hfrak_{\mathfrak{rq}})}{v_{\mathfrak{sp}}-v_{\mathfrak{rq}}}\in \Rr,\;\;\text{when}\;\;v_{\mathfrak{sp}}\neq v_{\mathfrak{rq}}\;\;\text{or}\\
    & \vfrak_{\mathfrak{qp}}=\vfrak_{\mathfrak{rs}}=\frac{2i(u_{\mathfrak{qp}}v_{\mathfrak{sp}}-u_{\mathfrak{qp}}\inv v_{\mathfrak{sp}}\inv)}{\hfrak_{\mathfrak{rq}}-\hfrak_{\mathfrak{sp}}} \in \Rr,\;\;\overline{\hfrak_{\mathfrak{rq}}}=\hfrak_{\mathfrak{sp}},\;\;\hfrak_{\mathfrak{rq}}\neq \hfrak_{\mathfrak{sp}},\;\;\text{when}\;\;v_{\mathfrak{sp}}= v_{\mathfrak{rq}}
    \end{aligned}
  \end{equation}
  for all faces, and if $\beta|_{t=0}$ and $\Re \hfrak$ are constants,
  then compatibility \eqref{eq:compatibilitycmc} holds and 
  $\eta_{e_1}$ is $k$-invariant. Moreover, when $\eta_{e_2}$ is also $k$-invariant and
  \begin{equation} \label{eq:angle and embedded2}
    \beta|_{t=0}=\frac{2}{\sqrt{\kappa^2-1}\sin \frac{\theta}{2}},\;\;\Re \hfrak=\frac{2 \cot \frac{\theta}{2}}{\sqrt{\kappa^2-1}},\;\;
    u \neq \pm 1,\;\;v_{\mathfrak{sp}} v_{\mathfrak{rq}} \neq 0
  \end{equation}
  for $\theta \in (-\pi,0) \cup (0,\pi)$ and $\kappa>1$, then $\eta$ produces a $K \equiv 1$ rc-net as in \eqref{eq:surf and normal} satisfying \eqref{eq:pgauss} via \eqref{eq:sym-bob} with $t=0$ and $\xi+2=\tau=0$, and 
  every $K\equiv 1$ rc-net with $\kappa>1$ can be produced this way, up to rigid motion.

  \item For the flat connection $\eta$ in \eqref{eq:connection-CGC},
  if 
  \begin{equation} \label{eq:compa3}
    \begin{aligned}
    & v_{\mathfrak{sp}}\neq v_{\mathfrak{rq}},\;\;u_{\mathfrak{qp}}=u_{\mathfrak{rs}},\;\;
    u_{\mathfrak{qp}}^2=v_{\mathfrak{sp}}v_{\mathfrak{rq}},\;\;\text{and}\\
    & \vfrak_{\mathfrak{qp}}=\vfrak_{\mathfrak{rs}}=\frac{u_{\mathfrak{qp}}(-\overline{\hfrak_{\mathfrak{sp}}}+\hfrak_{\mathfrak{rq}})}{v_{\mathfrak{sp}}+v_{\mathfrak{rq}}} \in i\Rr,\;\;\text{when}\;\;v_{\mathfrak{sp}}\neq -v_{\mathfrak{rq}}\;\;\text{or}\\
    & \vfrak_{\mathfrak{qp}}=\vfrak_{\mathfrak{rs}}=\frac{2(u_{\mathfrak{qp}}v_{\mathfrak{sp}}+u_{\mathfrak{qp}}\inv v_{\mathfrak{sp}}\inv)}{\hfrak_{\mathfrak{rq}}-\hfrak_{\mathfrak{sp}}}\in i\Rr,\;\;\overline{\hfrak_{\mathfrak{rq}}}=\hfrak_{\mathfrak{sp}},\;\;\hfrak_{\mathfrak{rq}}\neq \hfrak_{\mathfrak{sp}},\;\;\text{when}\;\;v_{\mathfrak{sp}}= -v_{\mathfrak{rq}}
    \end{aligned}
  \end{equation}
  for all faces, and if $\beta|_{t=0}$ and $\Re \hfrak$ are constants,
  then compatibility \eqref{eq:compatibilitycgc} holds and 
  $\eta_{e_1}$ is $k$-invariant. Moreover, when $\eta_{e_2}$ is also $k$-invariant and
  \begin{equation} \label{eq:angle and embedded3}
    \beta|_{t=0}=\frac{2 \kappa}{\sin \frac{\theta}{2}},\;\;\Re \hfrak=2 \kappa\cot \frac{\theta}{2},\;\; 
    \Re u \neq 0,\;\;(\Re v_{\mathfrak{sp}}) (\Re v_{\mathfrak{rq}}) <0 
  \end{equation}
  for $\theta \in (-\pi,0) \cup (0,\pi)$ and $\kappa>0$, then $\eta$ produces a $K\equiv-1$ rc-net as in \eqref{eq:surf and normal} satisfying \eqref{eq:ngauss} via \eqref{eq:sym-bob} with $t=0$ and $\xi-2=\tau=0$, and 
  every $K\equiv -1$ rc-net can be produced this way, up to rigid motion.
\end{enumerate}
\end{theorem}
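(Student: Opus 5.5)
The proof splits into three parts. First we verify compatibility under the stated hypotheses, using Lemma \ref{lemma:compatibility1} and Lemma \ref{lemma:compatibility2}. Then we show that the resulting $k$-invariant (resp. $j$-invariant) connection produces an rc-net through the Sym formula \eqref{eq:sym-bob}. Finally we prove the converse, that every CGC rc-net arises this way. All three cases follow the same scheme, so we write out case (3) in detail and then describe the modifications needed for (1) and (2). The modifications amount to replacing $e^{\pm t}$ by $e^{\pm it}$ and $\Ss^1$-valued $u,v$ by real ones, and in case (1) also swapping the roles of $j$ and $k$.

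\emph{Compatibility.} Set $u_{\mathfrak{qp}}=u_{\mathfrak{rs}}=:u$ and $v_{\mathfrak{sp}}v_{\mathfrak{rq}}=u^2$. Then \eqref{eq:compatibilitycgc1} holds. Substituting $\vfrak_{\mathfrak{qp}}=\vfrak_{\mathfrak{rs}}=:\vfrak$ into \eqref{eq:compatibilitycgc3} and \eqref{eq:compatibilitycgc4} turns them into two linear equations in $\vfrak,\bar\vfrak$. Since $\vfrak\in i\Rr$, we have $\bar\vfrak=-\vfrak$, and both equations reduce to
\[
\vfrak\,(v_{\mathfrak{sp}}+v_{\mathfrak{rq}})=u\,(\hfrak_{\mathfrak{rq}}-\overline{\hfrak_{\mathfrak{sp}}}).
\]
This uses $v_{\mathfrak{rq}}=u^2/v_{\mathfrak{sp}}$ and $|u|=|v|=1$, and it is exactly the displayed formula for $\vfrak$. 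The degenerate subcase $v_{\mathfrak{sp}}=-v_{\mathfrak{rq}}$ forces $\overline{\hfrak_{\mathfrak{rq}}}=\hfrak_{\mathfrak{sp}}$. In that subcase \eqref{eq:compatibilitycgc2} becomes $\vfrak(\hfrak_{\mathfrak{rq}}-\hfrak_{\mathfrak{sp}})=2(uv+u^{-1}v^{-1})$, which is the second formula. In the generic subcase, \eqref{eq:compatibilitycgc2} should follow from the constancy of $\beta|_{t=0}$ on opposite edges, i.e. $|\hfrak_{\mathfrak{sp}}|^2-v_{\mathfrak{sp}}^2-v_{\mathfrak{sp}}^{-2}=|\hfrak_{\mathfrak{rq}}|^2-v_{\mathfrak{rq}}^2-v_{\mathfrak{rq}}^{-2}$, together with the constancy of $\Re\hfrak$. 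Together these determine $\Im\hfrak$ up to sign, and a direct expansion then confirms \eqref{eq:compatibilitycgc2}. Once the equations hold, $\eta_{e_1}$ visibly depends only on $j$, which gives $k$-invariance.

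\emph{Producing an rc-net.} If $\eta_{e_2}$ is also $k$-invariant, then $M:=\eta_{(j,k+1)(j,k)}$ is a constant unitary quaternion $e^{\phi\,\mathbf{m}}$ with $\mathbf{m}\in\Im\Hh$ a unit vector. Together with the $k$-invariance of $L$, this gives the parallel frame $\Phi(j,k)=M^k\Phi(j,0)$, since $L$ and $M$ commute by flatness once both are invariant. Substituting into \eqref{eq:sym-bob} at $t=0$ gives
\[
\x(j,k+1)=M(t)^{-1}\cdots,
\]
and the rule $\Phi\mapsto M\Phi$ induces, through \eqref{eq:isom}, a rotation of $\n$ by $\mathrm{Ad}$ of $\Phi^{-1}M\Phi$. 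Because $\Phi(j,k)^{-1}M\Phi(j,k)$ is independent of $k$ and, by flatness, also independent of $j$, both $\x$ and $\n$ are rotated about a fixed axis. The rotation angle is $2\phi$, where $\cos\phi=\Re\hfrak/\beta|_{t=0}$. The normalizations \eqref{eq:angle and embedded3} give $\cos\phi=\cos\frac\theta2$, so the angle is $\theta$. Differentiating in $t$ shows that the translational part $\xi[M^{-1}\partial_tM]^{\tr=0}$ lies along the same axis, so the motion is a genuine rotation rather than a screw motion; this is where $\beta$ being constant is used.

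The circular-net property and $K\equiv-1$ come from the fact that $\eta$ is gauge equivalent to the Hoffmann--Sageman-Furnas Lax pair, by Proposition \ref{prop:timandandrewlaxpair}, whose Sym nets are cK-nets. Proposition \ref{proposition:rotational surface} then puts the net in the form \eqref{eq:surf and normal}. The inequalities $\Re u\neq0$ and $(\Re v_{\mathfrak{sp}})(\Re v_{\mathfrak{rq}})<0$ translate into the embeddedness conditions \eqref{eq:embeddeda}. Finally, the value of $\kappa$ in \eqref{eq:ngauss} is read off from $\beta|_{t=0}=2\kappa/\sin\frac\theta2$ by computing $f^2+a^2$, i.e. the distance of a vertex from the axis.

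\emph{Converse.} This is the main obstacle. Given a $K\equiv-1$ rc-net, we take $f,a,b,c$ from Theorem \ref{theorem:hyp} and solve for the data explicitly. We choose $v_{\mathfrak{sp}}$ and $\hfrak$ at each $j$ so that the Sym formula reproduces $(f(j),a(j),b(j))$, and we choose $u,\vfrak$ from $c(j)$ via \eqref{eq:delta}. The resulting net agrees with the given one up to rigid motion, using \eqref{eq:isom} and Lemma \ref{lemma:gauge-invariance}. The delicate points are twofold. First, we must check that the resulting $\vfrak$ is purely imaginary. Second, we must handle the sign choices so that $\Re v$ changes sign across the edge; this corresponds to the condition $\text{sign}(a(j+1))=\text{sign}(a(j))$ in case (1).
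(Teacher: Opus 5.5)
The converse, which you flag as the main obstacle, is not carried out, and the one concrete step you propose there does not work.

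You cannot ``choose $u,\vfrak$ from $c(j)$ via \eqref{eq:delta}''. Equation \eqref{eq:delta} only defines $\delta_{(1)},\ell$ from connection data already in hand. Once the $k$-edge data $v,\hfrak$ are fixed, $u$ (up to sign) and $\vfrak$ are forced by $u^2=v_{\mathfrak{sp}}v_{\mathfrak{rq}}$ and \eqref{eq:compa3}. ``Choose $v,\hfrak$ so that the Sym formula reproduces $(f,a,b)$'' only restates the goal. The ingredients you are missing are:
(i) by Lemma \ref{lemma:gauss} and \eqref{eq:embedded}, a $K\equiv-1$ rc-net with given $\kappa,\theta$ is determined by $(a(j),b(j))$ up to the sign of $f$ and a translation of $h$. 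Both ambiguities are removed by a half-turn about the axis and a translation along it, so it suffices to reproduce the normals.
(ii) The explicit choice $\Im v_{(j,1)(j,0)}=(-1)^j\kappa a(j)$, $\Im\hfrak_{(j,1)(j,0)}=2\kappa b(j)$, $\Re\hfrak=2\kappa\cot\frac{\theta}{2}$. For this choice $\beta^2|_{t=0}=|\hfrak|^2+4(\Im v)^2=4\kappa^2/\sin^2\frac{\theta}{2}$ is constant precisely because $a^2+b^2=1$. Also $(\Re v)^2=1-\kappa^2a^2=\kappa^2f^2>0$ by \eqref{eq:ngauss}, so $\Re v\neq0$ and its sign can be made to alternate. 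Then $\vfrak\in i\Rr$ is automatic, since $(v_{\mathfrak{sp}}+v_{\mathfrak{rq}})/u=2\Re(v_{\mathfrak{sp}}/u)\in\Rr$ and $\hfrak_{\mathfrak{rq}}-\overline{\hfrak_{\mathfrak{sp}}}\in i\Rr$.
(iii) An explicit initial frame $\Phi(0,0)|_{t=0}=P(0)|_{t=0}$, with which one checks $\n(j,0)=-i(b(j)\boldsymbol{\sigma}_3+a(j)\boldsymbol{\sigma}_1)$.

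Your link to case (1) is also off. There the sign condition is a genuine restriction: real $v$ with $v^2=u_{\mathfrak{qp}}u_{\mathfrak{rs}}$ forces $u_{\mathfrak{qp}}u_{\mathfrak{rs}}>0$, and $\text{sign}(u)=\text{sign}(a)$ under the normalization $u+u\inv=2a/\sqrt{1-\kappa^2}$. In case (3), by contrast, the alternating sign of $\Re v$ excludes no net.

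The forward direction follows the paper's route (it is Proposition \ref{proposition:closing} rederived inline), but several steps are misstated.
First, $M(j):=\eta_{(j,k+1)(j,k)}$ depends on $j$ and does not commute with $L$. Flatness and $k$-invariance give $M(j+1)=L(j)M(j)L(j)\inv$. It is this relation that makes $\Phi(j,k)=M(j)^k\Phi(j,0)$ parallel and $R=\Phi\inv M\Phi$ constant.
Second, the translation in \eqref{eq:isom} is $\xi[R\inv\partial_t R]^{\tr=0}$, not $\xi[M\inv\partial_t M]^{\tr=0}$. Its axial part, $\xi Q\inv[D\inv\partial_t D]^{\tr=0}Q$ as in Lemma \ref{lemma:associated}, is nonzero for $t\neq0$ (these are the helices of the later remark). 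A nonzero translation along the axis is exactly a screw motion, so showing it ``lies along the axis'' does not give a rotation. What kills it at $t=0$ is $\partial_t D|_{t=0}=0$, which follows from $\det M=1$, $\omega_+\neq\omega_-$ and $\partial_t\beta|_{t=0}=0$ on each edge. Constancy of $\beta$ across edges plays no role there.
Finally, $f^2+a^2$ is not the distance to the axis. The paper identifies $\kappa$ through $\frac{\norm{\delta_k\x}^2}{4}+\frac{\scal{\delta_k\x,\n}^2}{\norm{\delta_k\x}^2}=(f^2+a^2)\sin^2\frac{\theta}{2}$, valid on every rc-net, which equals $4/\beta^2|_{t=0}$ on the net produced by $\eta$.
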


Before proving this theorem, we present two lemmas and a proposition about rotationally invariant flat connections.

\begin{lemma} \label{lemma:eigen}
  Let $\eta$ be a k-invariant flat connection.
  Then there exist a k-invariant map $P:\Zz \to \Hh^{\times}$ and a constant diagonal matrix $D$ such that 
  \begin{equation} \label{eq:diag}
    M_{(j,k+1)(j,k)}=P(j)DP(j)\inv,\;\; L_{(j+1,k)(j,k)}=P(j+1)P(j)\inv.
  \end{equation}
\end{lemma}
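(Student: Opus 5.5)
The plan is to use flatness together with $k$-invariance to show that the $M$'s at different $j$ are all conjugate to one another, and then diagonalize. By $k$-invariance, write $L(j):=L_{(j+1,k)(j,k)}$ and $M(j):=M_{(j,k+1)(j,k)}$; both depend only on $j$. For the face with vertices $(j,k),(j+1,k),(j+1,k+1),(j,k+1)$, the compatibility condition \eqref{eq:compa} then reads
\begin{equation*}
M(j+1)L(j)=L(j)M(j),\quad\text{that is,}\quad M(j+1)=L(j)M(j)L(j)\inv .
\end{equation*}
Iterating, every $M(j)$ is conjugate in $\Hh^{\times}$ to $M(0)$ by the product $L(j-1)\cdots L(0)$, and for negative $j$ by the inverse products.

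Next I will diagonalize $M(0)$. Any nonzero quaternion $q=q_0\boldsymbol{\sigma}_0+\vec q$ is, as a complex $2\times 2$ matrix, of the form $q_0\boldsymbol{\sigma}_0+\Im$-part. Its eigenvalues are $q_0\pm i\norm{\vec q}$, so it is diagonalizable over $\Cc$; when $\vec q\neq 0$ the eigenvalues are distinct. I want the conjugating element to lie in $\Hh^{\times}$, and for this I use that the unit quaternions act on $\Im\Hh\cong\Rr^3$ by rotations, transitively on each sphere. So there is $P_0\in\Hh^{\times}$ with $P_0\inv\vec q P_0=\norm{\vec q}(-i\boldsymbol{\sigma}_3)$. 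Then
\begin{equation*}
D:=P_0\inv M(0)P_0=q_0\boldsymbol{\sigma}_0-i\norm{\vec q}\boldsymbol{\sigma}_3
\end{equation*}
is diagonal. If $\vec q=0$, then $M(0)$ is already scalar, and I take $P_0=\boldsymbol{\sigma}_0$.

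Then I define $P(0):=P_0$ and $P(j+1):=L(j)P(j)$ for all $j\in\Zz$, running the recursion backwards through $P(j)=L(j)\inv P(j+1)$. This gives $L(j)=P(j+1)P(j)\inv$ immediately. By induction using the conjugation relation above, $M(j)=P(j)DP(j)\inv$. Each $P(j)$ is a product of elements of $\Hh^{\times}$ and so lies in $\Hh^{\times}$. Regarded as a function of $(j,k)$ that is constant in $k$, $P$ is $k$-invariant.

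The main obstacle is keeping $P$ quaternionic rather than merely in $GL_2(\Cc)$. This is handled by the rotation argument above instead of a generic eigenvector matrix. One further point: if $M$ depends on $t$, then $D$ and $P$ depend on $t$ as well, and "constant" means independent of $j$ and $k$. I would state this dependence explicitly.
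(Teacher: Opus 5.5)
Your proposal is correct and follows essentially the paper's argument. Flatness plus $k$-invariance gives $M(j+1)=L(j)M(j)L(j)^{-1}$, $M$ is diagonalized by a unit quaternion, and $P$ is propagated by $P(j+1)=L(j)P(j)$; the paper phrases the diagonalization as diagonalizing the normal matrix $M$ by a special unitary matrix, which is the same as your rotation argument. Diagonalizing only $M(0)$ and then propagating makes the constancy of $D$ automatic, a slight streamlining of the paper's step of diagonalizing each $M(j)$ separately and then re-choosing $P$.
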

\begin{proof}
  Since $M_{(j,k+1)(j,k)}$ is a non-zero quaternion and so a normal matrix, 
  $M_{(j,k+1)(j,k)}$ can be diagonalized by a special unitary matrix.
  Therefore, there exists a quaternion $P(j)$ so that 
  $M_{(j,k+1)(j,k)}=P(j)DP(j)\inv$, where
  $$D=\matrix{\omega_{+}}{0}{0}{\omega_{-}}\in \Hh$$
  and $\omega_{+},\omega_{-}$ are the eigenvalues of $M_{(j,k+1)(j,k)}$.
  By the compatibility condition and $k$-invariance,
  \begin{equation} \label{eq:k-inv}
  M_{(j+1,k+1)(j+1,k)}=L_{(j+1,k)(j,k)} M_{(j,k+1)(j,k)}L_{(j+1,k)(j,k)}\inv,
  \end{equation}
  and then the eigenvalues of $M_{(j,k+1)(j,k)}$ are constant with respect to both $j$ and $k$.
  Namely, $D$ is constant. Moreover, \eqref{eq:k-inv} implies
  the columns of $L_{(j+1,k)(j,k)}P(j)$ are eigenvectors of $M_{(j+1,k+1)(j+1,k)}$ 
  with eigenvalues $\omega_{+}, \omega_{-}$. 
  We can then choose $P$ so that $P(j+1)=L_{(j+1,k)(j,k)}P(j)$.
  \end{proof}

\begin{lemma} \label{lemma:associated}
  Let $\eta$ be a k-invariant flat connection,
  with $D$ and $P$ as in \eqref{eq:diag}.
  Set 
  $$Q=P(0)\inv \Phi(0,0),\;\;
  T=\xi \left[Q^{-1} \partial_t Q\right]^{\tr = 0},\;\;
  \mathbf{y}=\x-T,\;\;
  R=Q\inv D Q.$$
  Then, via \eqref{eq:sym-bob}, for all $t$, $\eta$ produces
  \begin{equation} \label{eq:rotation}
    \mathbf{y}(j,k)=R^{-k}\mathbf{y}(j,0)R^k +\xi k Q\inv \left[D^{-1} \partial_t D\right]^{\tr = 0}Q,\;\;\;
    \n(j,k)=R^{-k}\n(j,0)R^k.
  \end{equation}
  Here $R^{-k} \mathbf{y} R^{k}$ represents rotation of $ \mathbf{y}$ about
  axis $\{aQ\inv (-i\boldsymbol{\sigma}_3) Q | a \in \Rr\}$, and the vectors
  $Q\inv \left[D^{-1} \partial_t D\right]^{\tr = 0} Q$
  and $Q\inv (-i\boldsymbol{\sigma}_3) Q$ are parallel.
\end{lemma}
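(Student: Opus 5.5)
The plan is to build an explicit parallel frame out of $P$ and $D$ from Lemma \ref{lemma:eigen}, and then read off $\x$ and $\n$ directly from the Sym formula \eqref{eq:sym-bob}.

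\emph{Step 1: the parallel frame.} Normalize the parallel frame by $\Phi(0,0)$. Define
$$\Phi(j,k) := P(j) D^k P(0)^{-1}\Phi(0,0) = P(j)D^kQ.$$
It is parallel for $\eta$:
\begin{itemize}
\item In the $k$ direction, $M_{(j,k+1)(j,k)}\Phi(j,k) = P(j)DP(j)^{-1}P(j)D^kQ = \Phi(j,k+1)$.
\item In the $j$ direction, $L_{(j+1,k)(j,k)}\Phi(j,k) = P(j+1)P(j)^{-1}P(j)D^kQ = \Phi(j+1,k)$.
\end{itemize}
This frame agrees with $\Phi$ at $(0,0)$. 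Since both are parallel for the flat connection $\eta$ on the connected domain, the two frames coincide. Here $P$ and $D$ depend on $t$, because $\eta$ does; $Q$ is constant in $j,k$ but depends on $t$.

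\emph{Step 2: the normal.} Write $\Phi(j,k) = \Phi(j,0)\,Q^{-1}D^kQ = \Phi(j,0)R^k$. Substituting into \eqref{eq:sym-bob} gives
$$\n(j,k) = R^{-k}\Phi(j,0)^{-1}(-i\boldsymbol{\sigma}_3)\Phi(j,0)R^k = R^{-k}\n(j,0)R^k.$$

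\emph{Step 3: the position.} Compute
$$\Phi^{-1}\partial_t\Phi = R^{-k}\big(\Phi(j,0)^{-1}\partial_t\Phi(j,0)\big)R^k + R^{-k}\partial_t(R^k).$$
Use $\partial_t(R^k) = \partial_t(Q^{-1}D^kQ)$ together with the fact that $D$ is diagonal, so it commutes with $D^k$ and $\partial_t D$. This gives
$$R^{-k}\partial_t R^k = -Q^{-1}\partial_tQ + kQ^{-1}D^{-1}\partial_tD\,Q + R^{-k}Q^{-1}(\partial_t Q)R^k.$$
Since conjugation commutes with taking the trace-free part, applying $\xi[\cdot]^{\tr=0}$ yields
$$\x(j,k) = R^{-k}\big(\x(j,0) + T\big)R^{k} - T + \xi k\,Q^{-1}[D^{-1}\partial_tD]^{\tr=0}Q.$$
At $k=0$ this reads $\x(j,0) = \x(j,0)+T-T$, so it is consistent. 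Here one must keep track of the $\tau\n$ term in the Sym formula; it is already conjugated by $R^k$, so it causes no problem.

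The paper's normalization $\mathbf y = \x - T$ suggests its $T$ enters with the opposite sign to mine. I would recheck the sign by evaluating the $R^{-k}(\cdot)R^k$ terms carefully. The honest identity I expect is that $\x + T$ (for my sign of $T$) transforms by conjugation plus translation. Matching conventions is the main bookkeeping obstacle, so I would verify it on the $k=1$ case first.

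\emph{Step 4: geometric interpretation.} A unit-norm quaternion that is a multiple of $D = \mathrm{diag}(\omega_+,\omega_-)$ has the form $r\exp(\phi\, i\boldsymbol{\sigma}_3)$. Conjugation by $R^k = Q^{-1}D^kQ$ is therefore rotation about the axis spanned by $Q^{-1}(-i\boldsymbol{\sigma}_3)Q$; the scalar factor cancels in the conjugation. Finally, $D^{-1}\partial_tD$ is diagonal, so its trace-free part is a real multiple of $-i\boldsymbol{\sigma}_3$. Conjugating by $Q$ then gives a vector parallel to the axis, which accounts for the screw-translation term.
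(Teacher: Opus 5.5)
Your route (the frame $\Phi(j,k)=P(j)D^kQ=\Phi(j,0)R^k$, then the Sym formula) is the same as the paper's, and your Steps 1, 2 and 4 are fine. \textbf{The genuine problem is Step 3.} You differentiate $R^k=Q\inv D^kQ$ with the wrong signs on the $Q$-terms. Since $\partial_t(Q\inv)=-Q\inv(\partial_tQ)Q\inv$, the term $R^{-k}Q\inv D^k\partial_tQ=Q\inv\partial_tQ$ carries a plus sign. The term coming from $\partial_t(Q\inv)$ is $-R^{-k}\bigl(Q\inv\partial_tQ\bigr)R^k$. So
\begin{equation*}
R^{-k}\partial_t(R^k)=Q\inv\partial_tQ-R^{-k}\bigl(Q\inv\partial_tQ\bigr)R^k+k\,Q\inv D\inv(\partial_tD)\,Q .
\end{equation*}
A more transparent way to see this is to differentiate $\Phi(j,k)=P(j)D^kQ$ directly:
\begin{equation*}
\Phi\inv\partial_t\Phi=R^{-k}\bigl(Q\inv P(j)\inv\partial_tP(j)\,Q\bigr)R^k+k\,Q\inv D\inv(\partial_tD)\,Q+Q\inv\partial_tQ .
\end{equation*}
Setting $k=0$ shows that the bracketed term equals $\Phi(j,0)\inv\partial_t\Phi(j,0)-Q\inv\partial_tQ$.

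Now apply $\xi[\cdot]^{\tr=0}$, which commutes with conjugation, and add $\tau\n(j,k)=R^{-k}\tau\n(j,0)R^k$. This gives
\begin{equation*}
\x(j,k)-T=R^{-k}\bigl(\x(j,0)-T\bigr)R^k+\xi k\,Q\inv\left[D\inv\partial_tD\right]^{\tr=0}Q,
\end{equation*}
which is exactly the claimed formula for $\mathbf{y}=\x-T$. There is no sign-convention mismatch with the paper. The formula you derived, and the identity for $\x+T$ that you say you expect, are false in general.

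A quick test shows this. Take $L\equiv 1$, $P\equiv 1$, $D$ independent of $t$, and $\tau=0$. Then $\Phi(j,k)=D^kQ$, so $\Phi\inv\partial_t\Phi=Q\inv\partial_tQ$ and $\x\equiv T$. This is consistent with $\mathbf{y}\equiv 0$. Your formula would instead require $R^{-k}TR^k=T$, which fails whenever $T$ is not along the axis $Q\inv(-i\boldsymbol{\sigma}_3)Q$. Your $k=0$ consistency check cannot catch this, because both sign choices reduce to an identity at $k=0$. With the corrected derivative, the rest of your argument goes through unchanged.
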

\begin{proof}
  From Lemma \ref{lemma:eigen},
  \begin{equation} \label{eq:solution}
    \Phi(j,k)=P(j)D^k P(0)\inv \Phi(0,0)
  \end{equation}
  is a parallel frame with an initial value $\Phi(0,0)$, and
  the Sym formula gives
  \begin{equation*}
    \begin{aligned}
     \n(j,k)&=\Phi(j,k)\inv (-i \boldsymbol{\sigma}_3)\Phi(j,k)\\
     &=Q\inv D^{-k} Q \Phi(j,0)\inv (-i \boldsymbol{\sigma}_3)\Phi(j,0) Q\inv D^k Q\\
     &=R^{-k} \n(j,0) R^k,
    \end{aligned}
  \end{equation*}
  \begin{equation*}
    \begin{aligned}
    \mathbf{y}(j,k)&=\xi \left[\Phi(j,k)\inv \partial_t \Phi(j,k)\right]^{\tr = 0} + \tau \n(j,k) - T\\
    &=R^{-k} \mathbf{y}(j,0)R^k
    +\xi Q\inv \left[D^{-k} \partial_t (D^k)\right]^{\tr = 0}Q\\
    &=R^{-k} \mathbf{y}(j,0)R^k +\xi k Q\inv \left[D^{-1} \partial_t D\right]^{\tr = 0}Q.
    \end{aligned}
  \end{equation*}
  As $D$ is diagonal,
  $R\inv Q\inv (-i\boldsymbol{\sigma}_3)QR=Q\inv (-i\boldsymbol{\sigma}_3)Q$, and
  $\norm{R\inv  \mathbf{y}(j,0)R}=\norm{ \mathbf{y}(j,0)}$, so
  $$ \mathbf{y}(j,0) \mapsto R^{-k} \mathbf{y}(j,0) R^k$$ 
  represents a rotation with 
  axis through the origin and parallel to $Q\inv (-i\boldsymbol{\sigma}_3) Q$.
  Diagonality of $\left[D^{-1} \partial_t D\right]^{\tr = 0}$ implies 
  $Q\inv \left[D^{-1} \partial_t D\right]^{\tr = 0}Q$ is parallel to $Q\inv(-i\boldsymbol{\sigma_3})Q$.
\end{proof}

\begin{proposition} \label{proposition:closing}
 Let $(\x,\n)$ be a $K\equiv 1$ or $K\equiv -1$ c-net with unitary
 flat connection $\eta$ that produces it at $t=0$, and we assume that $\x$ is locally embedded.
 If $\eta$ is $k$-invariant or $j$-invariant and $\tr M_{(j,1)(j,0)}=2 \cos (\theta/2)$, resp. $\tr L_{(1,k)(0,k)}=2 \cos (\theta/2)$, for $\theta \in  (-\pi,0) \cup (0,\pi)$, 
 then $(\x,\n)$ is an rc-net.
\end{proposition}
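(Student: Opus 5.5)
We treat the $k$-invariant case; the $j$-invariant case is the same argument with the roles of $j$ and $k$ exchanged. First we apply Lemma \ref{lemma:eigen} to write $M_{(j,k+1)(j,k)}=P(j)DP(j)\inv$ with $D$ constant and diagonal. Then Lemma \ref{lemma:associated} at $t=0$ gives
\[
\mathbf{y}(j,k)=R^{-k}\mathbf{y}(j,0)R^k+\xi k\,Q\inv\left[D\inv\partial_tD\right]^{\tr=0}Q,\qquad \n(j,k)=R^{-k}\n(j,0)R^k .
\]
Because $\eta$ is unitary, $M$ lies in $\SU(2)$ after normalization. The eigenvalues are therefore $\omega_\pm=e^{\pm i\phi}$, and the trace condition $\tr M=2\cos(\theta/2)$ forces $\phi=\pm\theta/2$. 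Conjugation by $R=Q\inv DQ$ is then a rotation by angle $\pm\theta$ about the axis $\Rr\,Q\inv(-i\boldsymbol{\sigma}_3)Q$. We fix the sign of $\theta$ by choosing the ordering of $\omega_\pm$. Since $\theta\in(-\pi,0)\cup(0,\pi)$, this rotation is nontrivial and is not a half-turn, as Definition \ref{definition:r-net} requires.

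The key point is that the translational part $\xi k\,Q\inv[D\inv\partial_tD]^{\tr=0}Q$ is parallel to the rotation axis. The map $\x(j,k)\mapsto\x(j,k+1)$ is therefore a screw motion: a rotation by $\theta$ about an axis $L$ followed by a translation $d$ along $L$, where $L$ is parallel to $Q\inv(-i\boldsymbol{\sigma}_3)Q$ and passes through $T$ shifted appropriately. At the same time, $\n(j,k)\mapsto\n(j,k+1)$ is a pure rotation by $\theta$ about a parallel axis through the origin. So $(\x,\n)$ is an r-net exactly when $d=0$.

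To prove $d=0$, we use the c-net and local embeddedness hypotheses. Consider a face with vertices $\x(j,k),\x(j,k+1),\x(j+1,k+1),\x(j+1,k)$. With coordinates adapted to the screw axis, as in the proof of Lemma \ref{lemma:circular net of revolution}, write $\x(j,k)=(f,0,h_0)$, $\x(j,k+1)=(f\cos\theta,f\sin\theta,h_0+d)$, and similarly for the row $j+1$ with parameters $f_1,\tau,h$. We then impose concyclicity and the negativity of the cross ratio \eqref{eq:embeddedness}.

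The main obstacle is to show that these conditions force $d=0$. A plane quadrilateral obtained by screw-moving an edge by the same helical step cannot be inscribed in a circle with embedded, non-crossing order unless the helical pitch vanishes. The concrete route is to require coplanarity of the four points and concyclicity, then show they give $d\sin\tau=0$ and an incompatibility similar to the one in Lemma \ref{lemma:circular net of revolution}.

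If that direct computation stalls, there is an alternative. The edges $\x(j,k+1)-\x(j,k)$ and $\x(j+1,k+1)-\x(j+1,k)$ are both parallel, via the c-net condition, to the corresponding $\delta_k\n$. These $\delta_k\n$ lie in planes perpendicular to the axis, since $\n$ rotates purely. Hence each such edge has zero component along the axis, which gives $d=0$ directly. This requires $\delta_k\n\neq0$. That holds because $\n(j,k)$ is not on the axis, by \eqref{eq:degenerate1} and the non-degeneracy \eqref{eq:degenerate2} of the face.

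Once $d=0$, $(\x,\n)$ is an r-net in the $k$ direction, and it is a locally embedded c-net by hypothesis. Proposition \ref{proposition:rotational surface} then shows that it is an rc-net.
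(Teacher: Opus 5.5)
Your skeleton (Lemma \ref{lemma:eigen}, then Lemma \ref{lemma:associated}, then Proposition \ref{proposition:rotational surface}) is the paper's, but you kill the axial translation by a different mechanism. The paper argues spectrally. We have $\det M\equiv 1$ and $\partial_t \tr M|_{t=0}=0$, because the traces of the connections in Section 1 depend on $t$ only through $e^{2it}+e^{-2it}$ or $e^{2t}+e^{-2t}$. Hence the distinct eigenvalues $e^{\pm i\theta/2}$ satisfy $\partial_t\omega_\pm|_{t=0}=0$, so $\partial_t D|_{t=0}=0$. The term $\xi k Q\inv[D\inv\partial_t D]^{\tr=0}Q$ in \eqref{eq:rotation} therefore vanishes at $t=0$, which gives \eqref{eq:rotate} at once. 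You instead show geometrically that the pitch $d$ vanishes, from $\delta_k\x\parallel\delta_k\n$ and the fact that $\n$ rotates about a parallel axis through the origin. This is a valid alternative that uses only the c-net hypothesis, not the particular $t$-dependence of $\eta$. Since the pitch equals $|\xi\,\partial_t\phi(0)|$ where $\tr M=2\cos\phi$, your argument even shows that $\partial_t\tr M|_{t=0}=0$ is automatic for any unitary $k$-invariant connection producing a c-net. The paper's version is shorter and needs no case distinction.

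One step, however, is false as written: $\delta_k\n\neq 0$ does not follow from \eqref{eq:degenerate1} and \eqref{eq:degenerate2}. Rc-nets with $a(j)=0$ for some $j$ exist. An example is the $K\equiv-1$ net of Section 2 with $a(j)=\mathrm{sn}(\Theta j,\kappa)$ and $b(j)=\mathrm{cn}(\Theta j,\kappa)$, taken at $j=0$. There $\n(0,k)$ lies on the axis, so $\n(0,k+1)=\n(0,k)$ and $\delta_k\n=0$. Meanwhile $\sigma_k\n=2\n(0,k)\neq 0$ and the face is non-degenerate, so on such an edge the parallelism condition gives no information.

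The repair is immediate. On such an edge $\sigma_k\n$ is a nonzero axial vector, so the edge constraint $\langle\delta_k\x,\sigma_k\n\rangle=0$ itself gives $\delta_k\x\perp$ axis. Alternatively, since $d$ does not depend on $(j,k)$, one edge with $\delta_k\n\neq0$ suffices. If $\n$ were on the axis at every vertex, then $\sigma_j\n,\sigma_k\n\neq 0$ would make $\n$ constant and force $K=0$.

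Your first, concyclicity-based route is only sketched, so as written it is not yet an argument. It can be completed: equal opposite chords make the embedded face an isosceles trapezoid with $\delta_k\x(j,k)\parallel\delta_k\x(j+1,k)$. For $d\neq0$ this forces a parallelogram, hence a rectangle, hence $d=0$.
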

\begin{proof}
 Assume that $\eta$ is $k$-invariant.
 Since $\det (M_{(j,k+1)(j,k)})$ is identically $1$ and 
 $$\partial_t (\tr (M_{(j,k+1)(j,k)}))|_{t=0}=0,$$
 we have that the eigenvalues $\omega_{\pm}$ of $M_{(j,k+1)(j,k)}$ satisfy $\partial_t \omega_{\pm} |_{t=0}=0$
 if $\omega_{+} \neq \omega_{-}$, so the $D$ in \eqref{eq:diag} satisfies $\partial_t D|_{t=0}=0$. 
 Noting that $\tr M_{(j,1)(j,0)}=2 \cos (\theta/2)$ implies $\omega_{\pm}=e^{\pm i\theta/2}$ 
 for $\theta \in (-\pi,0) \cup (0,\pi)$, in fact $\omega_{+} \neq \omega_{-}$.
 Therefore, at $t=0$, \eqref{eq:rotation} becomes
 \begin{equation} \label{eq:rotate}
 \mathbf{y}(j,k)=R^{-k}\mathbf{y}(j,0)R^k,\;\; \n(j,k)=R^{-k}\n(j,0)R^k.
 \end{equation}
 Then, by Proposition \ref{proposition:rotational surface}, $(\x,\n)$ is an rc-net.
 The same can be shown when we assume that $\eta$ is 
 $j$-invariant as well.
\end{proof}

\begin{proof}[Proof of Theorem \ref{theorem:compatibility}]
The first sentences of each of the three cases can be checked by direct computations.
Furthermore, in each case, we can see that the assumptions of Proposition \ref{proposition:closing}, namely the $k$-invariance or $j$-invariance, the local embeddedness, and $\tr M_{(j,1)(j,0)}=2 \cos (\theta/2)$, resp. $\tr L_{(1,k)(0,k)}=2 \cos (\theta/2)$, are satisfied, so each $\eta$ produces an rc-net. 
Next, we consider the correspondence with $\kappa$. For an rc-net as in \eqref{eq:surf and normal}, we have
\begin{equation*}
  \frac{||\delta_k \x(j,k)||^2}{4}\pm\frac{(\scal{\delta_k \x(j,k),\n(j,k)})^2}{||\delta_k \x(j,k)||^2}=(f(j)^2\pm a(j)^2)\sin^2 \frac{\theta}{2}.
\end{equation*}
On the other hand, the rc-net produced in cases (1), (2), (3), resp., satisfy
\begin{equation*}
  -\frac{||\delta_j \x(j,k)||^2}{4}+\frac{(\scal{\delta_j \x(j,k),\n(j,k)})^2}{||\delta_j \x(j,k)||^2}=\frac{4}{\alpha^2|_{t=0}}=(1-\kappa^2)\sin^2 \frac{\theta}{2},
\end{equation*}
\begin{equation*}
  \frac{||\delta_k \x(j,k)||^2}{4}-\frac{(\scal{\delta_k \x(j,k),\n(j,k)})^2}{||\delta_k \x(j,k)||^2}=\frac{4}{\beta^2|_{t=0}}=(\kappa^2-1)\sin^2 \frac{\theta}{2},
\end{equation*}
\begin{equation*}
  \frac{||\delta_k \x(j,k)||^2}{4}+\frac{(\scal{\delta_k \x(j,k),\n(j,k)})^2}{||\delta_k \x(j,k)||^2}=\frac{4}{\beta^2|_{t=0}}=\frac{1}{\kappa^2}\sin^2 \frac{\theta}{2}.
\end{equation*}
Therefore, each rc-net produced in each case has the corresponding 
$\kappa$ from Lemma \ref{lemma:gauss}.

Finally, we show that
$\eta$ produces all $K\equiv \pm 1$ rc-nets with the corresponding $\kappa$.
For all $K\equiv \pm 1$ rc-nets as in \eqref{eq:surf and normal}, once 
$a(j)$ and $b(j)$ are given, $K\equiv \pm 1$, Lemma \ref{lemma:gauss}, and \eqref{eq:embedded} imply that 
$f$ (up to sign) and $\delta h$ are determined.
By use of $180^\circ$
rotation about the rotation axis and translation along that axis,
we can remove the sign and translational ambiguities of $f(0)$ and $h$.
Here, we present details for case (3).
For $a(j)$, $b(j)$ satisfying \eqref{eq:embedded}, we set
$$\Im v_{(j,1)(j,0)}=(-1)^j \kappa a(j),\;\;\Im \hfrak_{(j,1)(j,0)}=2 \kappa b(j).$$
Then, since we have \eqref{eq:angle and embedded3} and $v$ is unitary, $\Re v_{(j,1)(j,0)}$ is determined up to 
single choice of sign.
Thus we choose $u_{(j+1,0)(j,0)}$ so that $u_{(j+1,0)(j,0)}^2=v_{(j,1)(j,0)}v_{(j+1,1)(j+1,0)}$,
and $\vfrak$ as in \eqref{eq:compa3}. 
Since $\delta f(j) = 0$ and $\sigma a(j) = 0$ can not simultaneously hold, it follows that $\Re u \neq 0$.
Moreover, for $P(j)$ as in \eqref{eq:diag}, we choose  
$$
\Phi(0,0)|_{t=0}=P(0)|_{t=0}=
\begin{cases}
\frac{i}{\sqrt{2(b(0)+1)}}\matrix{b(0)+1}{a(0)}{a(0)}{-(b(0)+1)} & (b(0)\neq -1),\\
\matrix{0}{i}{i}{0} & (b(0)=-1).
\end{cases}
$$
Since $\Phi(j,0)|_{t=0}=P(j)|_{t=0}\in \SU_2$ and 
the columns of $\Phi(j,0)|_{t=0}$ are eigenvectors of $M_{(j,1)(j,0)}|_{t=0}$ by Lemma \ref{lemma:eigen},
for some unitary function $p(j)$ with $p(0)=i$,
$$
\Phi(j,0)|_{t=0}=P(j)|_{t=0}=
\begin{cases}
\frac{1}{\sqrt{2(b(j)+1)}}\matrix{(b(j)+1)p(j)}{(-1)^{j+1} a(j)p(j)\inv}{(-1)^{j}a(j)p(j)}{(b(j)+1)p(j)\inv} & (b(j)\neq -1),\\
\matrix{0}{-p(j)\inv}{p(j)}{0} & (b(j)=-1).
\end{cases}
$$
Because every entry of $L_{(j+1,0)(j,0)}|_{t=0}$ is pure imaginary, 
$p(j) =\pm 1$ for $j$ odd, and $p(j) =\pm i$ for $j$ even.
Then we can check 
$$\n(j,0)=(P(j)\inv (-i \boldsymbol{\sigma}_3) P(j))|_{t=0}=-i(b(j)\boldsymbol{\sigma}_3+a(j)\boldsymbol{\sigma}_1).$$
Therefore, for any $a$ and $b$ satisfying \eqref{eq:embedded}, 
we see that they indeed correspond to the parameters of a $K\equiv -1$ rc-net $(\x,\n)$. Thus for case (3), we proved $\eta$ produces all $K\equiv -1$ rc-nets with the corresponding $\kappa$. 
For case (1), (2), resp., we choose 
$$u_{(1,k)(0,k)}+u_{(1,k)(0,k)}\inv=\frac{2 a(k)}{\sqrt{1-\kappa^2}},\;\;\Im \vfrak_{(1,k)(0,k)}=\frac{2i b(k)}{\sqrt{1-\kappa^2}},$$ 
$$v_{(j,1)(j,0)}-v_{(j,1)(j,0)}\inv=\frac{2 a(j)}{\sqrt{\kappa^2-1}},\;\;\Im \hfrak_{(j,1)(j,0)}=\frac{2i b(j)}{\sqrt{\kappa^2-1}}.$$ 
\end{proof}

\begin{remark}
  The condition $\text{sign}(a(j+1))=\text{sign}(a(j))$ in case (1) of Theorem \ref{theorem:compatibility} is natural because
  discrete isothermic unduloids appear as $\x \pm \n$. On the other hand, in case (2), discrete isothermic nodoids appear as $\x \pm \n$, where such a condition would not be natural. 
\end{remark}

\begin{remark}
  In cases (1), (2) and (3), $c$ as in \eqref{eq:edge1} satisfies, resp.,
  $$c(k)^2=\frac{(v+v\inv)^2}{|\hfrak|^2},\;\;c(j)^2=\frac{(u-u\inv)^2}{|\vfrak|^2},\;\;c(j)^2=\frac{(2 \Re u)^2}{|\vfrak|^2}.$$
\end{remark}

\begin{remark}
  In the above setting, when allowing $t \neq 0$, and choosing the initial value $\Phi(0,0)=P(0)$ in \eqref{eq:solution}
  for all $t$, we see that the discrete curve $\gamma_j: V \to \Rr^3$ defined by $\gamma_j(\cdot)=\x(j,\cdot)$ 
  via \eqref{eq:rotation} for each $t$ can be parametrized by
  \begin{equation} \label{eq:helix2}
   \gamma_j(k)=(\Upsilon_j \cos (\theta k+\iota_j),\Upsilon_j \sin (\theta k+\iota_j),\mu k+\Psi_j),
  \end{equation}
  where $\theta=\arg \big(\frac{\omega_{+}}{\omega_{-}}\big)$ and $\mu=\frac{1}{2}\xi i(\frac{\partial_t \omega_{+}}{\omega_{+}}-\frac{\partial_t \omega_{-}}{\omega_{-}})$, and
  $\Upsilon_j,\Psi_j,\iota_j$ are $k$-invariant. Thus, we see that the surfaces in the associated families contain discrete helices,
  see Figure \ref{figure:associated}.
\end{remark}

\begin{figure}[htbp] 
  \centering
  \setlength{\tabcolsep}{0pt}
  \renewcommand{\arraystretch}{0}
  \begin{tabular}{ccc}
  \includegraphics[width=7.0cm, angle=90, trim=3cm 1cm 3cm 1cm, clip]{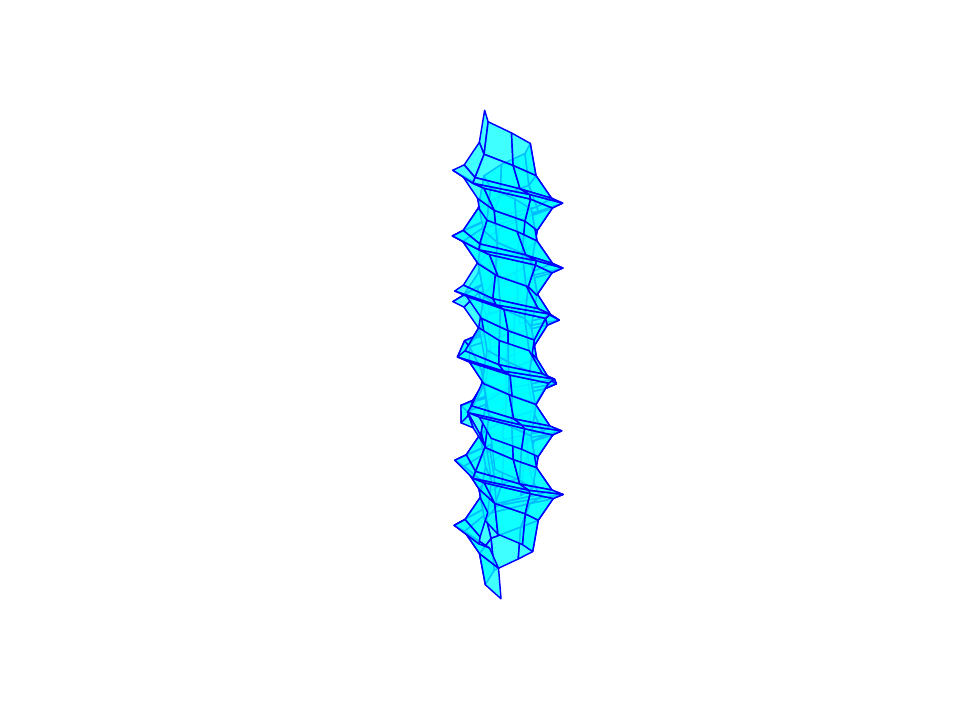} &
  \includegraphics[width=7.0cm, angle=90, trim=3cm 1cm 3cm 1cm, clip]{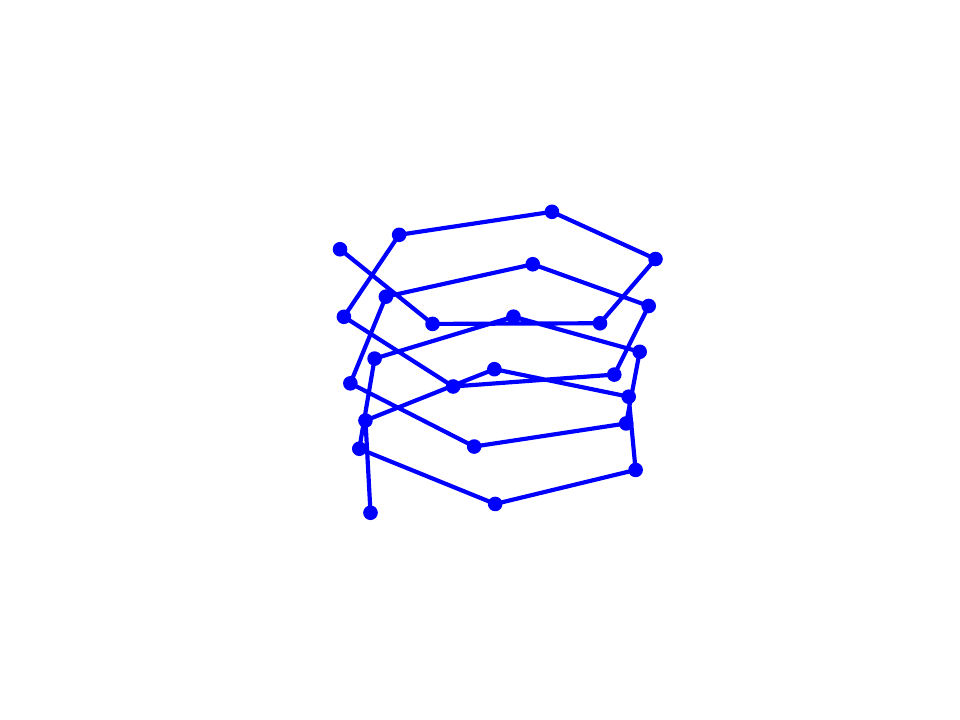} 
 \end{tabular}
 \vspace{-1.5cm}
  \caption{Left: The image of $\x$ for a member of the associated family of a $K\equiv -1$ rc-net (the vertical axis is now pointing rightward).
  Right: The discrete helix $\gamma_{0}$ at $j=0$ in the leftward surface.}
  \label{figure:associated}
\end{figure}

\section{Periodic B\"acklund transforms for cK-nets}

In this section, we construct negative CGC circular nets 
with periodicity, thus annular, by applying the Bäcklund transformation considered in
\cite{HS} to rotationally symmetric discrete surfaces.
In the previous sections, we described the Lax pair in terms of edge-defined functions $u$ and $v$, 
and this was convenient for comparing the $K\equiv1$ and $K\equiv-1$ cases 
seen in \eqref{eq:connection-CMC} and \eqref{eq:connection-CGC} and for understanding the connections as edge-based objects. 
For the purpose of the present section, rather, it is helpful to return to the vertex-defined function $s$ found in the Lax pair $(\mathcal{L},\mathcal{M})$ in \eqref{eq:Lax} originally studied in \cite{HS}. 

Taking the net to be rotational in the $k$-direction with associated $k$-invariant connection having $k$-invariant functions $u$ and $v$, we can choose $s$ to be $s(j)=v_{(j,1)(j,0)}$ with 
$\ell$, $m$, and $\delta_{(1)}$ and $\delta_{(2)}$ as defined in \eqref{eq:delta}.  
Thus the now non-unitary Lax pair $(\mathcal{L},\mathcal{M})$ in \eqref{eq:Lax}  becomes $k$-invariant.

In Proposition \ref{proposition:closing}, we saw a rotation angle $\theta$ that will determine whether the net closes, 
that is, whether 
\begin{equation}\tag{closing}
  \x(j,k+k_0)=\x(j,k),\;\;\n(j,k+k_0)=\n(j,k)
\end{equation}
for some integer $k_0 > 2$, 
for the case of $k$-invariant unitary flat connections.
Although the Lax pair \eqref{eq:Lax} is not unitary, we have a similar closing 
condition here as well. In this case, Equation \eqref{eq:rotate} still holds, and if $\mathcal{M}_{(j,k+1)(j,k)}^{k_0}|_{t=0}$
is a scalar multiple of the identity $I_2$, then $R^{k_0}$ is as well. Applying the Sym formula \eqref{eq:sym-bob}, we can conclude that 
\begin{equation} \label{eq:close}
	\mathcal{M}_{(j,k+1)(j,k)}^{k_0}|_{t=0}= \pm \sqrt{\det \mathcal{M}_{(j,1)(j,0)}^{k_0}}|_{t=0} I_2
\end{equation}
is a closing condition, since $\partial_t(\det \mathcal{M}_{(j,1)(j,0)})|_{t=0}=0$. In particular, this implies the rotational angle $\theta$ in \eqref{eq:close} satisfies $k_{0} \theta \in 2\pi \Zz$.

\subsection{B\"acklund transforms for cK-nets}
In \cite{HS}, the Bäcklund transformation for negative CGC circular nets 
was introduced by means of a Lax pair. By applying this method to 
rotationally symmetric circular nets, we will obtain new periodic 
circular nets. 

\begin{definition} [\cite{HS}]
Let $(\x,\n)$ and $(\tilde \x,\tilde \n)$ be cK-nets.  
Then $(\tilde \x,\tilde \n)$ is a \textbf{B\"acklund 
transformation} of $(\x,\n)$ if 
\begin{enumerate}
	\item the distance between corresponding points of $\x$ and $\tilde\x$ is constantly $|\sin \alpha|$, i.e. 
	\[ | \x - \tilde\x | = |\sin \alpha| \] 
	for some real value $\alpha \in (-\pi,0) \cup (0,\pi)$ called the \textbf{B\"acklund parameter},  
	\item every edge between corresponding points of $\x$ and $\tilde \x$ is perpendicular to both 
	normals $\n$ and $\tilde \n$ at those points, that is, 
	\[ \n, \tilde \n \perp \tilde \x - \x \; , \]
	\item $\n$ and $\tilde \n$ at corresponding points of $\x$ and $\tilde \x$ make 
	angle $\alpha$ between them, i.e. 
	\[ \langle \n, \tilde \n \rangle = \cos \alpha \; . \]
\end{enumerate}
\end{definition}

The transforms $(\tilde \x,\tilde \n)$ can be constructed by a dressing procedure, i.e. a gauge (see Theorem 8, \cite{HS}). 
We choose the B\"acklund matrices as
\begin{equation} 
 W(j,k;\alpha) = \begin{pmatrix}
	\cot \frac{\alpha}{2} \cdot \frac{\tilde s}{s} & i e^t \\
	i e^t & \cot \frac{\alpha}{2} \cdot \frac{s}{\tilde s}
\end{pmatrix},\;\;
V(j,k;\beta)=\begin{pmatrix}
	1 & \frac{i}{e^t} \tan \frac{\beta}{2}  \cdot \hat{s} s \\
	\frac{i}{e^t} \tan \frac{\beta}{2}  \cdot \frac{1}{\hat{s} s} & 1
\end{pmatrix}
\end{equation}
with B\"acklund parameters $\alpha$, $\beta$, and consider the gauging as $\tilde \Phi=W\Phi$ and $\hat \Phi=V\Phi$.  
We note that $\tilde s$ is the same for $(\tilde \x, \tilde \n)$ as $s$ is for $(\x,\n)$. In the latter gauging with $V$, 
we denote the B\"acklund transform as $(\hat \x,\hat\n)$ with corresponding $\hat s$, to distinguish it from the first gauging.  
These B\"acklund matrices come from the asymptotic K-net matrices defined in \cite{BP}, 
as though the edge connecting the transformed point and 
the original point would become an edge of an asymptotic K-net.
The gauge matrices $W$ and $V$ give the new Lax pairs $(\tilde{\mathcal{L}}, \tilde{\mathcal{M}})$ and $(\hat{\mathcal{L}},\hat{\mathcal{M}})$ again of the form \eqref{eq:Lax}, 
which are determined by \eqref{eq:gauged-connection-formula}, that is, we have
\begin{align*}
	&W(j+1,k;\alpha)\mathcal{L}_{(j+1,k)(j,k)}=\tilde{\mathcal{L}}_{(j+1,k)(j,k)} W(j,k;\alpha),\\
	&W(j,k+1;\alpha)\mathcal{M}_{(j,k+1)(j,k)}=\tilde{\mathcal{M}}_{(j,k+1)(j,k)} W(j,k;\alpha),
\end{align*}
and
\begin{align*}
 &V(j+1,k;\beta)\mathcal{L}_{(j+1,k)(j,k)}=\hat{\mathcal{L}}_{(j+1,k)(j,k)} V(j,k;\beta),\\
 &V(j,k+1;\beta)\mathcal{M}_{(j,k+1)(j,k)}=\hat{\mathcal{M}}_{(j,k+1)(j,k)} V(j,k;\beta).
\end{align*}
From this, $\tilde s$ and $\hat s$ can be determined as solutions 
of rational difference equations written via M\"obius transforms 
$\matrix{a_{11}}{a_{12}}{a_{21}}{a_{22}} \bullet z:=\frac{a_{11}z+a_{12}}{a_{21}z+a_{22}}$,
where the difference equations are 
\begin{subequations} \label{eq:riccatiWW}
\begin{equation} \label{eq:riccatiW}
 \tilde s(j+1,k)=A_{(j+1,k)(j,k)} \bullet \tilde s(j,k),\;\; 
 \tilde s(j,k+1)=B_{(j,k+1)(j,k)} \bullet \tilde s(j,k),
\end{equation}
where 
\begin{equation} \label{eq:A}
A=\matrix{\sin{\alpha}\; \ell^{-1} (u t_{1}^{-1}+u^{-1} t_{1})}{(u\inv-u)\cos{\alpha}-u-u \inv}{ (u-u \inv)\cos{\alpha}-u-u \inv}{\sin{\alpha}\; \ell (u \inv  t_{1}^{-1}+u t_{1})},
\end{equation}
\begin{equation} \label{eq:B}
B=\matrix{\sin{\alpha}\; m^{-1} (v t_{2}^{-1}+v^{-1} t_{2})}{(v\inv-v)\cos{\alpha}-v-v \inv}{ (v-v\inv)\cos{\alpha}-v-v \inv}{\sin{\alpha}\; m (v \inv  t_{2}^{-1}+v t_{2})},
\end{equation}
\end{subequations}
and 
\begin{subequations} \label{eq:riccatiVV}
\begin{equation} \label{eq:riccatiV}
	\hat s(j+1,k)=C_{(j+1,k)(j,k)} \bullet \hat s(j,k),\;\;
	\hat s(j,k+1)=D_{(j,k+1)(j,k)} \bullet \hat s(j,k)
\end{equation}
where
\begin{equation} \label{eq:C}
C=\matrix{\sin{\beta}\; \ell(u\inv t_{1}^{-1}+u t_{1})}{(u-u \inv)\cos{\beta}+u+u \inv}{ (u\inv-u)\cos{\beta}+u+u \inv}{\sin{\beta}\; \ell \inv (u   t_{1}^{-1}+u \inv t_{1})},
\end{equation}
\begin{equation} \label{eq:D}
D=\matrix{\sin{\beta}\; m(v\inv t_{2}^{-1}+v t_{2})}{(v-v \inv)\cos{\beta}+v+v \inv}{ (v\inv-v)\cos{\beta}+v+v \inv}{\sin{\beta}\; m \inv (v  t_{2}^{-1}+v \inv t_{2})}.
\end{equation}
\end{subequations}
Here, $t_{i}:=\tan(\delta_{(i)}/2)$ for $i=1,2$ with consistency implying $\tilde t_{(i)}=t_{(i)}$, and $u$, $v$ are defined by \eqref{eq:s}.
The next lemma states that the functions $\tilde{s}$ and $\hat{s}$ for the transformed Lax pairs are again unitary, like $s$.

\begin{lemma} \label{lemma:stildeshat}
	Let $(L,M)$ be a Lax pair as in \eqref{eq:Lax}, and 
	$\tilde s$ and $\hat s$ solutions for \eqref{eq:riccatiW} and \eqref{eq:riccatiV}, respectively.
	If $\alpha \in  (-\pi,0) \cup (0,\pi)$, resp. $\beta \in  (-\pi,0) \cup (0,\pi)$, 
	and the initial value $\tilde s(0,0)$, resp. $\hat s(0,0)$, is unitary, then $\tilde s$, resp. $\hat s$, is unitary for all $(j,k) \in \Zz^2$.
\end{lemma}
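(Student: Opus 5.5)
Since $\tilde s$ (resp. $\hat s$) is produced from its initial value by iterating the Möbius maps $A$, $B$ (resp. $C$, $D$) along any lattice path, it suffices to show that each single Möbius transformation $A_{(j+1,k)(j,k)}\bullet$, $B_{(j,k+1)(j,k)}\bullet$ (and likewise $C\bullet$, $D\bullet$) maps the unit circle $\Ss^1$ into itself. Induction on $|j|+|k|$ along coordinate paths then gives unitarity on all of $\Zz^2$. Backward steps use the inverse Möbius maps, which also preserve $\Ss^1$ once the forward ones do bijectively. The compatibility of the two equations in \eqref{eq:riccatiW} is guaranteed by the gauge construction, so path-independence need not be reproved.

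A Möbius map $z\mapsto \frac{a_{11}z+a_{12}}{a_{21}z+a_{22}}$ with nonzero determinant preserves $\Ss^1$ exactly when the matrix is, up to a scalar factor, of the form $\matrix{p}{q}{\bar q}{\bar p}$, i.e. a $\mathrm{U}(1,1)$-type matrix. More concretely, I would check that $|a_{11}z+a_{12}|=|a_{21}z+a_{22}|$ for $|z|=1$. This holds if $a_{22}=\lambda\overline{a_{11}}$ and $a_{21}=\lambda\overline{a_{12}}$ for some $|\lambda|=1$. So the plan is to verify this structure for $A$ in \eqref{eq:A}; $B$, $C$, $D$ follow identically with $u\to v$, $\ell\to m$, $\alpha\to\beta$, or a swap of roles.

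For $A$, use $u\in\Ss^1$ and $\alpha\in\Rr$, so $\bar u=u^{-1}$. The off-diagonal entries are $a_{12}=(u^{-1}-u)\cos\alpha-(u+u^{-1})$ and $a_{21}=(u-u^{-1})\cos\alpha-(u+u^{-1})$. Here $\overline{a_{12}}=(u-u^{-1})\cos\alpha-(u^{-1}+u)=a_{21}$, so $\lambda=1$ on the off-diagonal. For the diagonal, $t_1=\tan(\delta_{(1)}/2)$. One needs $\overline{\sin\alpha\,\ell^{-1}(u t_1^{-1}+u^{-1}t_1)}=\sin\alpha\,\ell(u^{-1}t_1^{-1}+u t_1)$, i.e. $\bar\ell^{-1}(u^{-1}\bar t_1^{-1}+u\bar t_1)=\ell(u^{-1}t_1^{-1}+ut_1)$.

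Two cases then arise. When $|\sin\delta_{(1)}|\le1$, $t_1$ is real and $\ell$ is unitary by Proposition \ref{prop:timandandrewlaxpair}, so $\bar\ell^{-1}=\ell$ and the identity is immediate. When $|\sin\delta_{(1)}|>1$, $\delta_{(1)}$ is complex and $\ell$ is non-unitary. There I would use the explicit expression for $|\ell|^2$ from the proof of Proposition \ref{prop:timandandrewlaxpair}, together with the fact that $t_1$ then has modulus one, so $\bar t_1=t_1^{-1}$ (indeed $\sin\delta=2t/(1+t^2)$ real and greater than 1 in absolute value forces $|t|=1$). The required identity becomes $\bar\ell^{-1}(u^{-1}t_1+ut_1^{-1})=\ell(u^{-1}t_1^{-1}+ut_1)$, i.e. $|\ell|^2=\frac{u t_1^{-1}+u^{-1}t_1}{u t_1+u^{-1}t_1^{-1}}$. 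This should match the stated formula for $|\ell|^2$ after substituting $\cot=t^{-1}$, $\tan=t$, possibly combined with the phase of $\ell$ fixed by its definition in \eqref{eq:delta}.

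This second case is the main obstacle. I would handle it by writing $\ell=\vfrak/(u^{-1}t_1^{-1}+ut_1)$ directly and verifying the identity with $\vfrak$, using that \eqref{eq:connection-CGC} is a quaternion; alternatively I would allow a unimodular $\lambda\neq1$. Finally, I would note that $\det A\neq0$ for $\alpha\in(-\pi,0)\cup(0,\pi)$, which follows because $A$ arises from invertible gauges $W$ (with $\cot\frac{\alpha}{2}$ finite and nonzero), so no point of $\Ss^1$ is sent to $\infty$.
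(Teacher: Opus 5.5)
Your proposal is correct and is essentially the paper's proof, which consists only of the observation that $A$ (and likewise $B$, $C$, $D$) has the form $\matrix{a_{11}}{a_{12}}{\overline{a_{12}}}{\overline{a_{11}}}$. Your ``main obstacle'' is not one: the identity you reach for $|\sin\delta_{(1)}|>1$ is, after substituting $\cot\frac{\delta_{(1)}}{2}=t_1^{-1}$ and $\tan\frac{\delta_{(1)}}{2}=t_1$, exactly the formula for $|\ell|^2$ in the proof of Proposition~\ref{prop:timandandrewlaxpair}, and the phase of $\ell$ never enters because only $\bar\ell^{-1}=\ell/|\ell|^2$ appears.

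The one inaccurate point is your claim that $\det A\neq0$ because $W$ is invertible. In fact $\det A=\sin^2\alpha\,(t_1+t_1^{-1})^2-4=4(\sin^2\alpha-\sin^2\delta_{(1)})/\sin^2\delta_{(1)}$, which vanishes when $\sin^2\alpha=\sin^2\delta_{(1)}$; the Riccati matrices are built at the spectral value where $\det W=\cot^2\frac{\alpha}{2}+e^{2t}=0$, so invertibility of $W$ gives nothing. Your backward steps therefore need this case excluded as a nondegeneracy assumption, which the paper also tacitly ignores. Forward steps are unaffected, because $|a_{11}z+a_{12}|=|a_{21}z+a_{22}|$ on $\Ss^1$ already gives $|A\bullet z|=1$ wherever $A\bullet z$ is defined.
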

\begin{proof}
 When $\alpha\in  (-\pi,0) \cup (0,\pi)$, $A$ takes the form 
 $\matrix{a_{11}}{a_{12}}{\overline{a_{12}}}{\overline{a_{11}}}$,
 giving a M\"obius transformation preserving unitarity. The cases of $B$, $C$ and $D$ are similar.
\end{proof}

When $\alpha \in (-\pi,0) \cup (0,\pi)$, the new frame $\tilde \Phi=W\Phi$ and Sym formula produce a new cK-net $(\tilde \x,\tilde \n)$ at non-degenerate faces (see Theorem 8, \cite{HS}), 
a B\"acklund transformation of $(\x,\n)$, and likewise for $\hat \Phi$.

\subsection{Periodic B\"acklund transforms for circular nets of revolution}

\begin{lemma}
Supposing $B$ and $D$ are $k$-invariant,
then $\tilde s(j,k+N_0)=\tilde s(j,k)$ for all $(j,k) \in \Zz^2$ and some $N_0>1$ if
\begin{equation} \label{eq:period1}
 B^{N_0}_{(j,k+1)(j,k)}=\pm \sqrt{\det B^{N_0}}I_2.
\end{equation}
Similarly, $\hat s(j,k+N_0)=\hat s(j,k)$ for all $(j,k) \in \Zz^2$ if
\begin{equation} \label{eq:period2}
 D^{N_0}_{(j,k+1)(j,k)}=\pm \sqrt{\det D^{N_0}}I_2.
\end{equation}
\end{lemma}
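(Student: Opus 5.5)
The plan is to iterate the Möbius recursion \eqref{eq:riccatiW} in the $k$-direction and use that Möbius transformations are invariant under rescaling of the matrix. Fix $j$. Since $B$ is $k$-invariant, write $B_j:=B_{(j,k+1)(j,k)}$, independent of $k$. By \eqref{eq:riccatiW}, $\tilde s(j,k+1)=B_j\bullet\tilde s(j,k)$ for every $k$. Because the map $X\mapsto (z\mapsto X\bullet z)$ is a homomorphism from invertible $2\times2$ matrices to Möbius transformations, induction on $N$ gives
\[
\tilde s(j,k+N)=B_j^{N}\bullet\tilde s(j,k)\quad\text{for all } N\ge 0 .
\]

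Next I apply \eqref{eq:period1}. If $B_j^{N_0}=\lambda I_2$ with $\lambda=\pm\sqrt{\det B^{N_0}}\neq 0$, then
\[
B_j^{N_0}\bullet z=\frac{\lambda z+0}{0\cdot z+\lambda}=z .
\]
Hence $\tilde s(j,k+N_0)=\tilde s(j,k)$. Since $j$ and $k$ were arbitrary, this holds on all of $\Zz^2$.

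The only point needing care is that every Möbius step is well defined, i.e.\ that denominators do not vanish and $\det B\neq 0$. I would handle this using Lemma \ref{lemma:stildeshat}: $\tilde s$ stays on $\Ss^1$, and $B$ has the form $\matrix{a_{11}}{a_{12}}{\overline{a_{12}}}{\overline{a_{11}}}$. When $\det B=|a_{11}|^2-|a_{12}|^2\neq0$, such a matrix maps $\Ss^1$ bijectively onto itself, so no denominator vanishes on the unit circle. The nondegeneracy $\det B\neq 0$ is implicit in \eqref{eq:period1}, since otherwise the right-hand side vanishes and $B^{N_0}$ would be zero. Alternatively, it is implicit in the solvability of \eqref{eq:riccatiW}. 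I expect this well-definedness issue to be the only mild obstacle, and it is essentially bookkeeping.

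The statement for $\hat s$ follows by the identical argument, using \eqref{eq:riccatiV} in place of \eqref{eq:riccatiW}, the $k$-invariant matrix $D$ in place of $B$, and \eqref{eq:period2} in place of \eqref{eq:period1}.
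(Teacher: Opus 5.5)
Your proof is correct and follows the paper's argument, which is just the observation $\tilde s(j,k+N_0)=B^{N_0}\bullet\tilde s(j,k)$ together with the fact that a nonzero scalar matrix acts as the identity Möbius map (and likewise for $D$ and $\hat s$). The paper does not include your well-definedness bookkeeping, and it is harmless. Note, however, that the appeal to Lemma \ref{lemma:stildeshat} covers only real $\alpha$, so in general you should rely on your fallback that the recursion is presupposed to be solvable.
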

\begin{proof} 
	This follows from $\tilde s(j,k+N_0)=B^{N_0} \bullet \tilde s(j,k)$, and similarly for $\hat s$.
\end{proof}

\begin{theorem} \label{theorem:periodic}
 For a $K=-1$ rc-net $(\x,\n)$ with $k$-invariant data satisfying the closing condition \eqref{eq:close},
 if $\alpha$, resp. $\beta$, is in $(-\pi,0) \cup (0,\pi)$ and 
 \eqref{eq:period1}, resp. \eqref{eq:period2}, holds, then 
 $(\tilde \x,\tilde \n)$, resp. $(\hat \x, \hat \n)$, is periodic with period $\mathrm{LCM}(k_0,N_0)$, 
 the least common multiple of $k_0$ and $N_0$.
\end{theorem}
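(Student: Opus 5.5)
We will show that the transformed frame $\tilde\Phi=W\Phi$ (resp. $\hat\Phi=V\Phi$), evaluated at $t=0$, changes under $k\mapsto k+\mathrm{LCM}(k_0,N_0)$ only by right multiplication with a constant matrix that is a scalar multiple of $I_2$. The Sym formula \eqref{eq:sym-bob} then gives exactly the same contact element net. We treat the case of $W$; the case of $V$ is identical, with \eqref{eq:period2} and $\hat s$ in place of \eqref{eq:period1} and $\tilde s$.

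\textbf{Step 1: structure of the untransformed frame.} The data of $(\x,\n)$ are $k$-invariant. Arguing as in Lemma \ref{lemma:eigen} and Lemma \ref{lemma:associated}, now for the Lax pair \eqref{eq:Lax}, we take the parallel frame $\Phi(j,k)=P(j)\,\mathcal{D}^k\,P(0)\inv\Phi(0,0)$, where $\mathcal{M}_{(j,k+1)(j,k)}=P(j)\mathcal{D}P(j)\inv$. Writing $N=\mathrm{LCM}(k_0,N_0)$, this gives
\[
\Phi(j,k+N)=\Phi(j,k)\,Q\inv \mathcal{D}^{N}Q,\qquad Q=P(0)\inv\Phi(0,0).
\]
The closing condition \eqref{eq:close} says that $\mathcal{M}^{k_0}|_{t=0}$ is a scalar multiple of $I_2$. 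Since $k_0$ divides $N$, the same holds for $\mathcal{D}^{N}|_{t=0}$. What the Sym formula actually needs is the $t$-derivative: $\partial_t(\mathcal{D}^N)|_{t=0}$ must be scalar modulo trace. We get this as in the proof of Proposition \ref{proposition:closing}. There $\partial_t\omega_\pm|_{t=0}=0$ because $\partial_t\tr\mathcal{M}|_{t=0}=0$ and $\partial_t\det\mathcal{M}|_{t=0}=0$; the paper records the latter after \eqref{eq:close}, and the former must be checked directly from \eqref{eq:Lax2}.

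\textbf{Step 2: periodicity of the gauge.} Since $B$ is $k$-invariant, the lemma preceding the theorem together with \eqref{eq:period1} gives $\tilde s(j,k+N_0)=\tilde s(j,k)$, and hence also $\tilde s(j,k+N)=\tilde s(j,k)$. The function $s(j)=v_{(j,1)(j,0)}$ is $k$-invariant. Therefore $W(j,k;\alpha)$, whose entries involve only $\tilde s/s$, $\cot\frac{\alpha}{2}$ and $e^t$, satisfies $W(j,k+N)=W(j,k)$ for all $t$.

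\textbf{Step 3: conclusion via the Sym formula.} Combining Steps 1 and 2,
\[
\tilde\Phi(j,k+N)=W(j,k)\Phi(j,k)\,H=\tilde\Phi(j,k)\,H,\qquad H=Q\inv\mathcal{D}^NQ.
\]
By \eqref{eq:isom} we then have $\tilde\n(j,k+N)=H\inv\tilde\n(j,k)H$ and
\[
\tilde\x(j,k+N)=H\inv\tilde\x(j,k)H+\xi\big[H\inv\partial_tH\big]^{\tr=0}.
\]
At $t=0$, $H$ is a scalar multiple of $I_2$, so conjugation by $H$ is trivial. Moreover $\partial_tH|_{t=0}$ is a scalar multiple of $I_2$ by Step 1, so the translation term vanishes. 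Hence $(\tilde\x,\tilde\n)$ has period $N$.

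The main obstacle is justifying that the translation term $\xi[H\inv\partial_tH]^{\tr=0}$ vanishes at $t=0$. This needs $\partial_t\omega_\pm|_{t=0}=0$, not merely $k_0\theta\in2\pi\Zz$. It holds when $\omega_+\neq\omega_-$, using that $\partial_t\tr\mathcal{M}|_{t=0}=0$ and $\partial_t\det\mathcal{M}|_{t=0}=0$; both identities must be verified from \eqref{eq:Lax2} for the non-unitary pair. This is also the source of the exponent $\mathrm{LCM}(k_0,N_0)$ in the statement: $N$ must be a multiple of $k_0$ for the frame factor $H$ to be scalar, and a multiple of $N_0$ for the gauge $W$ to be periodic.
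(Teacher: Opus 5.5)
Your argument is correct and is the paper's proof with the details filled in. The paper likewise uses \eqref{eq:period1} to get $N_0$-periodicity of $\tilde s$ and then invokes the closing condition \eqref{eq:close} through the Sym formula, which is exactly your bookkeeping $\tilde\Phi(j,k+N)=\tilde\Phi(j,k)H$ with $H|_{t=0}$ scalar and $[H\inv\partial_t H]^{\tr=0}|_{t=0}=0$.

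The check you flag as the main obstacle is immediate. In \eqref{eq:Lax2}, $t$ enters only the off-diagonal entries, since $m$, $\delta_{(2)}$ and $s$ are $t$-independent, so $\tr\mathcal{M}$ does not depend on $t$. Together with $\partial_t\det\mathcal{M}|_{t=0}=0$, this gives $\partial_t\omega_\pm|_{t=0}=0$.
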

\begin{proof}
	It remains only to show $\tilde s$, resp. $\hat s$, is periodic under $k \mapsto k+N_0$, which follows from \eqref{eq:period1} and \eqref{eq:period2}.
	Equation \eqref{eq:close} together with the Sym formula gives the result.
\end{proof}

\begin{figure}[htbp]
  \centering
  \setlength{\tabcolsep}{0pt} 
  \renewcommand{\arraystretch}{0} 
  \begin{tabular}{cccc}
    \includegraphics[width=.26\linewidth, trim=4.5cm 0cm 4.5cm 0cm, clip]{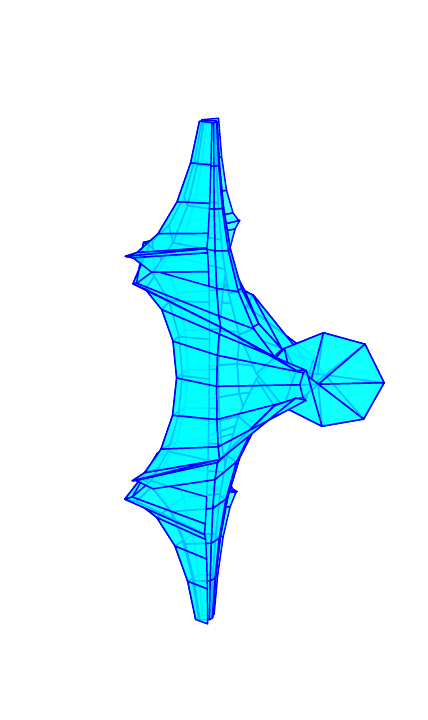} &
    \includegraphics[width=.26\linewidth, trim=5.0cm 0.5cm 5.0cm 0.5cm, clip]{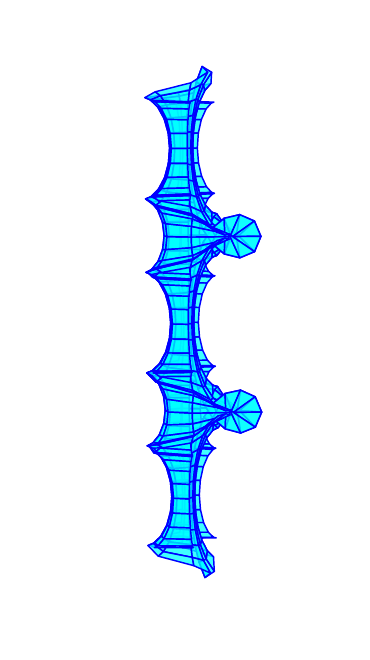} &
    \includegraphics[width=.26\linewidth, trim=4.5cm 0cm 4.5cm 0cm, clip]{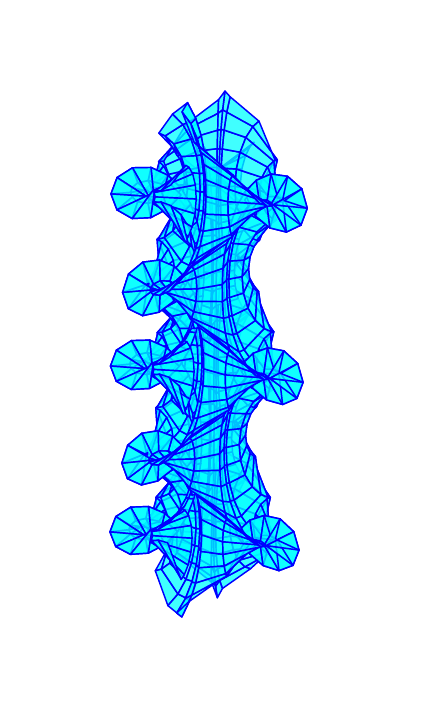} &
    \includegraphics[width=.26\linewidth, trim=4.5cm 0cm 4.5cm 0cm, clip]{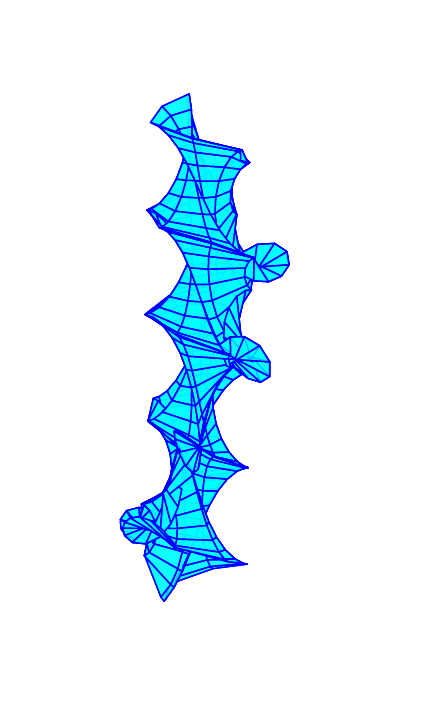}
  \end{tabular}
  \caption{Left: a single non-closed B\"acklund transform with $\alpha=\pi/2$ of a discrete pseudosphere (a discrete Kuen surface). 
	Second from left: a single non-closed B\"acklund transform with $\alpha=\pi/2$ of an rc-net for $0<\kappa<1$. 
	Second from right: a single non-closed B\"acklund transform with $\alpha=\pi/2$ of an rc-net for $\kappa>1$. 
	Right: a single B\"acklund transform of an rc-net for $\kappa>1$, which is periodic under $k \mapsto k+10$, using Theorem \ref{theorem:periodic}.}
\end{figure}

\subsection{Double B\"acklund transforms}
To consider double B\"acklund transformations,
after choosing the first two B\"acklund transformations 
via $W$ and $V$, we require permutability of the second 
two B\"acklund transformations with B\"acklund matrices
\begin{equation*} 
 \hat W(j,k;\alpha) = \matrix{\cot \frac{\alpha}{2} \cdot \frac{\tilde{\hat{s}}}{\hat{s}}}{i e^t}{i e^t}{\cot \frac{\alpha}{2} \cdot \frac{\hat{s}}{\tilde{\hat{s}}}},\;\;
 \tilde V(j,k;\beta)= \matrix{1}{\frac{i}{e^t} \tan \frac{\beta}{2}  \cdot \hat{\tilde{s}} \tilde{s}}{\frac{i}{e^t} \tan \frac{\beta}{2}  \cdot \frac{1}{\hat{\tilde{s}} \tilde{s}}}{1}.
\end{equation*}
We then obtain two frames $\hat{\tilde \Phi}=\tilde V W\Phi$ and $\tilde{\hat \Phi}=\hat W V\Phi$, 
and $\tilde V W=\hat W V$ would imply $\hat{\tilde \Phi}=\tilde{\hat \Phi}$, which is the frame for the desired double B\"acklund transform $(\hat{\tilde \x},\hat{\tilde \n})$ of $(\x,\n)$. 
If the complex parameters $\alpha$ and $\beta$ are chosen so that $(\hat{\tilde \x},\hat{\tilde \n})$ is a net in $\Rr^3$, 
then, as seen in \cite{HS}, $(\hat{\tilde \x},\hat{\tilde \n})$ is again a cK-net (at non-degenerate faces) produced by the Sym formula with $\xi=2$ and $\tau=0$ at $t=0$.

When we assume that $\beta=-\alpha$, then $\hat{\tilde{s}}=\tilde{\hat{s}}$ and we have the permutability formula
\begin{equation} \label{eq:permutability}
\hat{\tilde{s}}=\tilde{\hat{s}}=\frac{1}{s}\cdot\frac{\hat{s}\tilde{s}-\tan^2 \frac{\alpha}{2}}{1-\tan^2 \frac{\alpha}{2}\hat{s}\tilde{s}}.
\end{equation}

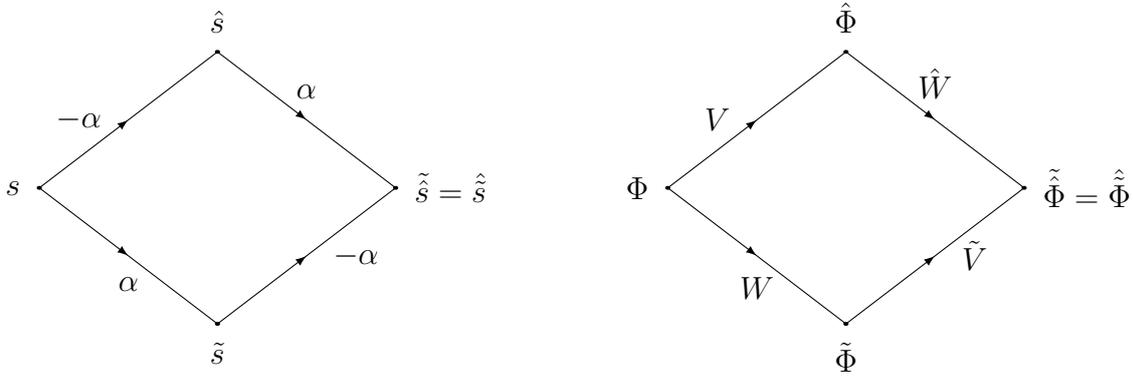
\begin{figure}[htbp]
  \centering
  \begin{minipage}{0.4\textwidth}
    \centering
    \begin{tikzpicture}[x=1.7cm,y=1.7cm,font=\normalsize,
      decoration={markings, mark=at position 0.5 with {\arrow{latex}}}
    ]
      \begin{scope}[scale=0.75]
        \begin{scope}[xscale=1.3]
          \begin{scope}[rotate around={-45:(1,1)}]
            \draw[black,postaction={decorate}] (0,0) -- (2,0)
              node[midway, below=4pt] {$\alpha$};
            \draw[black,postaction={decorate}] (2,0) -- (2,2)
              node[midway, right=6pt] {$-\alpha$};
            \draw[black,postaction={decorate}] (0,0) -- (0,2)
              node[midway, left=6pt] {$-\alpha$};
            \draw[black,postaction={decorate}] (0,2) -- (2,2)
              node[midway, above=4pt] {$\alpha$};

            \fill (0,0) circle[radius=1pt];
            \node[left=3pt] at (0,0) {$s$};

            \fill (2,0) circle[radius=1pt];
            \node[below=3pt] at (2,0) {$\tilde s$}; 

            \fill (2,2) circle[radius=1pt];
            \node[right=3pt] at (2,2) {$\tilde{\hat{s}}=\hat{\tilde{s}}$};

            \fill (0,2) circle[radius=1pt];
            \node[above=3pt] at (0,2) {$\hat s$};    
          \end{scope}
        \end{scope}
      \end{scope}
    \end{tikzpicture}
  \end{minipage}
  \hspace{1.0cm}
  \begin{minipage}{0.5\textwidth}
    \centering
    \begin{tikzpicture}[x=1.7cm,y=1.7cm,font=\normalsize,
      decoration={markings, mark=at position 0.5 with {\arrow{latex}}}
    ]
      \begin{scope}[scale=0.75]
        \begin{scope}[xscale=1.3]
          \begin{scope}[rotate around={-45:(1,1)}]
            \draw[black,postaction={decorate}] (0,0) -- (2,0)
              node[midway, below=4pt] {$W$};
            \draw[black,postaction={decorate}] (2,0) -- (2,2)
              node[midway, right=6pt] {$\tilde V$};
            \draw[black,postaction={decorate}] (0,0) -- (0,2)
              node[midway, left=6pt] {$V$};
            \draw[black,postaction={decorate}] (0,2) -- (2,2)
              node[midway, above=4pt] {$\hat W$};

            \fill (0,0) circle[radius=1pt];
            \node[left=3pt] at (0,0) {$\Phi$};

            \fill (2,0) circle[radius=1pt];
            \node[below=3pt] at (2,0) {$\tilde \Phi$};

            \fill (2,2) circle[radius=1pt];
            \node[right=3pt] at (2,2) {$\tilde{\hat{\Phi}}=\hat{\tilde{\Phi}}$};

            \fill (0,2) circle[radius=1pt];
            \node[above=3pt] at (0,2) {$\hat \Phi$};
          \end{scope}
        \end{scope}
      \end{scope}
    \end{tikzpicture}
  \end{minipage}

  \caption{Left: permutability diagram for creating $\tilde{\hat s}= \hat{\tilde s}$ with $\beta=-\alpha$.
  Right: the corresponding diagram for $\tilde{\hat \Phi}=\hat{\tilde \Phi}$.}
\end{figure}

Here, perhaps $\alpha \notin \Rr$, so $\tilde s$ and $\hat s$ might not be unitary, and we require the next lemma, which utilizes 
\begin{equation*} 
\text{Condition $\mathcal{C}$:}\;\;\;\beta=-\alpha\;\;\text{and}\;\;\sin \alpha \in \Rr.
\end{equation*}

\begin{lemma} \label{lemma:unitary}
 Under Condition $\mathcal{C}$,
 if $|\sin \alpha|\leq 1$, we choose the initial values 
 $\tilde s(0,0)$ and $\hat s(0,0)$ to be unitary, 
 whereas if $|\sin \alpha|>1$, we choose them so that 
 $\tilde s(0,0)= \overline{\hat s(0,0)}$. Then 
 $\hat{\tilde{s}}$ is unitary, even if $\hat{s}$ and 
 $\tilde{s}$ are not.
\end{lemma}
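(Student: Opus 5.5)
The plan is to reduce everything to the permutability formula \eqref{eq:permutability} and show that its right-hand side has modulus one. Put $T:=\tan^2\frac{\alpha}{2}$, so that
\begin{equation*}
\hat{\tilde s}=\frac1s\cdot\frac{\hat s\tilde s-T}{1-T\hat s\tilde s}.
\end{equation*}
Since $|s|=1$, it suffices to show that the product $w:=\hat s\tilde s$ is unitary and that $T$ is real. Indeed, for $|w|=1$ and $T\in\Rr$,
\begin{equation*}
\left|\frac{w-T}{1-Tw}\right|=\frac{|w-T|}{|\bar w-T|}=1,
\end{equation*}
because $1-Tw=w(\bar w-T)$ and $\bar w-T=\overline{w-T}$. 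The proof therefore has two steps.

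\emph{Reality of $T$.} Condition $\mathcal{C}$ gives $\sin\alpha\in\Rr$. If $|\sin\alpha|\le1$, we may take $\alpha$ real and $T$ is real. If $|\sin\alpha|>1$, then $\alpha=\pm\pi/2+i\gamma$ up to $2\pi$-shifts, with $\gamma\in\Rr$. Then $\cos\alpha$ is purely imaginary, and $T=\frac{1-\cos\alpha}{1+\cos\alpha}$ has modulus one rather than being real. So in this case the argument must be modified, and I would split into the two cases stated in the lemma.

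\emph{Case $|\sin\alpha|\le1$.} Here $\alpha$ and $\beta=-\alpha$ are real, so Lemma~\ref{lemma:stildeshat} shows that $\tilde s$ and $\hat s$ are unitary everywhere. Hence $w$ is unitary and $T$ is real, and the computation above finishes this case. (If $\alpha=\pm\pi$ or $0$ is excluded, nothing more is needed.)

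\emph{Case $|\sin\alpha|>1$.} This is the main obstacle. I would first show that the relation $\tilde s=\overline{\hat s}$ propagates from $(0,0)$ to all of $\Zz^2$. To do this, compare the Möbius matrices in \eqref{eq:riccatiWW} and \eqref{eq:riccatiVV} with $\beta=-\alpha$. Using $\sin\beta=-\sin\alpha\in\Rr$, $\cos\beta=\cos\alpha\in i\Rr$, $|u|=|v|=1$, and the conjugation behaviour of $\ell$, $m$, $t_i$ from Proposition~\ref{prop:timandandrewlaxpair}, I expect that $\overline{A\bullet z}=C\bullet\bar z$ up to scalar, and similarly for $B$ and $D$. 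Concretely, I would check entrywise that $\overline{A}$ is proportional to $\matrix{0}{1}{1}{0}C\matrix{0}{1}{1}{0}$ or a similar conjugate. Whether $\ell$ is unitary or has the modulus given after \eqref{eq:delta} matters here, so this verification is the delicate computation. Once the relation holds, induction along edges gives $\tilde s=\overline{\hat s}$ everywhere, and so $w=|\hat s|^2$ is real and positive.

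With $w>0$ real and $|T|=1$, rewrite
\begin{equation*}
\frac{w-T}{1-Tw}=\frac{w-T}{\bar T\,T(\bar T-w)}=-\bar T\,\frac{w-T}{w-\bar T}.
\end{equation*}
The factor $\frac{w-T}{w-\bar T}$ is the ratio of a complex number to its conjugate, since $w$ is real. So it is unitary, and $\bar T$ is unitary as well. Therefore $\hat{\tilde s}$ is unitary.

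Thus the key computation is the conjugation symmetry between $A,B$ and $C,D$ under Condition~$\mathcal{C}$. I would attempt it first at the level of the gauge matrices $W$ and $V$: relate $\overline{W(\alpha)}$ to $V(-\alpha)$ via the quaternionic conjugation $X\mapsto\matrix{0}{1}{-1}{0}\overline{X}\matrix{0}{-1}{1}{0}$ at $t=0$. I expect that route to be cleaner than manipulating the Möbius matrices directly.
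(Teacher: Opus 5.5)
Your outline matches the paper's proof. For $|\sin\alpha|\le 1$ you use Lemma~\ref{lemma:stildeshat} together with $\tan^2\frac{\alpha}{2}\in\Rr$. For $|\sin\alpha|>1$ you propagate $\tilde s=\overline{\hat s}$ and feed $w=|\hat s|^2>0$ and $|\tan^2\frac{\alpha}{2}|=1$ into \eqref{eq:permutability}. Your explicit modulus computations are a useful expansion of the paper's ``the lemma holds by \eqref{eq:permutability}''. There is one slip: $1-Tw=T(\bar T-w)$, not $\bar T T(\bar T-w)$, although your final expression $-\bar T\frac{w-T}{w-\bar T}$ is correct.

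The gap is that the only substantive step of the second case is left as an expectation, and the concrete identity you propose to verify is the wrong one. What is needed is $\overline{C\bullet\bar z}=A\bullet z$, i.e.\ $C\propto\bar A$, which is your first formulation. The paper gets this in one line from the M\"obius matrices:
\begin{itemize}
\item Putting $\beta=-\alpha$ in \eqref{eq:A} and \eqref{eq:C} gives $C_{11}=-A_{22}$, $C_{12}=-A_{12}$, $C_{21}=-A_{21}$, $C_{22}=-A_{11}$, that is, $C=-\boldsymbol{\sigma}_1A^t\boldsymbol{\sigma}_1$.
\item For $|\sin\alpha|>1$ with $\sin\alpha\in\Rr$, one has $\cos\alpha\in i\Rr$, while $u-u^{-1}\in i\Rr$ and $u+u^{-1}\in\Rr$. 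Hence $A_{12},A_{21}\in\Rr$.
\item $A_{22}=\overline{A_{11}}$, because $\sin\alpha\in\Rr$ and either $t_1\in\Rr$ with $|\ell|=1$, or $|t_1|=1$ with $|\ell|^2$ given by the formula at the end of the proof of Proposition~\ref{prop:timandandrewlaxpair}.
\end{itemize}
Hence $C=-\bar A$, and likewise $D=-\bar B$ from \eqref{eq:B} and \eqref{eq:D}. Induction then gives $\tilde s=\overline{\hat s}$ on all of $\Zz^2$.

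By contrast, $\bar A\propto\boldsymbol{\sigma}_1C\boldsymbol{\sigma}_1=-A^t$ fails in general: comparing off-diagonal entries would force $(u-u^{-1})\cos\alpha=0$ or $u+u^{-1}=0$. It is also the identity that would propagate $\tilde s=1/\overline{\hat s}$, not $\tilde s=\overline{\hat s}$.

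Your gauge-level route can also be completed. Where $\hat s=\overline{\tilde s}$, one finds $\matrix{0}{1}{-1}{0}\overline{W(\alpha)}\matrix{0}{-1}{1}{0}=ie^t\boldsymbol{\sigma}_1V(-\alpha)$ for every real $t$ (not only $t=0$), using $|\tan\frac{\alpha}{2}|=1$. Turning this into propagation additionally requires two facts: that $\mathcal{L},\mathcal{M}$ are quaternions for real $t$, and that the Lax form of the gauged pair determines the M\"obius update uniquely. So this route is longer than the matrix identity above.
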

\begin{proof}
  When $|\sin \alpha|\leq 1$, $\sin \alpha \in \Rr$ implies $\alpha$ is real and $\tilde s(j,k)$, $\hat s(j,k)$ are unitary by Lemma \ref{lemma:stildeshat}, and also $\tan^2 \frac{\alpha}{2}\in \Rr$. 
	Thus the lemma holds by Equation \eqref{eq:permutability}.
	When $|\sin \alpha|>1$, $\sin \alpha \in \Rr$ implies
  $\tan^2 \frac{\alpha}{2}$ is unitary. 
  Since $\beta=-\alpha$,  
	we have $C=-\boldsymbol{\sigma}_1 A^{t}\boldsymbol{\sigma}_1$, and $A$ has conjugate diagonal elements and real off-diagonal elements. 
	It follows that 
	if $\tilde s(j,k)=\overline{\hat s(j,k)}$, then $\tilde s(j+1,k)=\overline{\hat s(j+1,k)}$, and similarly $\tilde s(j,k+1)=\overline{\hat s(j,k+1)}$.
	Then, by induction, $\tilde s(j,k)= \overline{\hat s(j,k)}$ for all $(j,k)$ if $\tilde s(0,0)= \overline{\hat s(0,0)}$. Hence again, the lemma holds by \eqref{eq:permutability}.
\end{proof}

In Lemma \ref{lemma:unitary}, when $|\sin \alpha|\leq 1$, then $s$, $\tilde s$, $\hat s$ and $\hat{\tilde{s}}$ are all unitary, so
$W$, $V$, $\hat W$ and $\tilde V$ are all quaternions for all $t$, and the Sym formula implies all the nets 
$(\x,\n)$, $(\tilde \x,\tilde \n)$, $(\hat \x,\hat \n)$ and $(\hat{\tilde{\x}},\hat{\tilde{\n}})$ lie in $\Rr^3 \times \Ss^2$.
However, when $|\sin \alpha| > 1$, $(\tilde \x, \tilde \n)$ and $(\hat \x, \hat \n)$ generally are not constrained to $\Rr^3 \times \Ss^2$, 
but $(\hat{\tilde{\x}},\hat{\tilde{\n}})$ still will be, as in Corollary \ref{lemma:doublebacklund}.

\begin{corollary} \label{lemma:doublebacklund}
  Under the conditions in Lemma \ref{lemma:unitary}, the double B\"acklund transform $(\hat{\tilde{\x}},\hat{\tilde{\n}})$ lies in $\Rr^3 \times \Ss^2$.
\end{corollary}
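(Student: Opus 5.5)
The double B\"acklund frame is $\hat{\tilde\Phi}=\tilde V W\Phi=\hat W V\Phi$. I would show it is, up to an admissible gauge and a real $t$-independent scalar, a unitary frame, i.e. quaternion-valued for all real $t$. The Sym formula \eqref{eq:sym-bob} with $\xi=2$, $\tau=0$ then gives $\hat{\tilde\x}\in\Im\Hh=\Rr^3$ and $\hat{\tilde\n}\in\Ss^2$. Lemma \ref{lemma:gauge-invariance} guarantees that admissible gauges and real scalar factors (which are admissible with $y=0$) do not change $(\x,\n)$. So it suffices to show that the composite gauge $\Gamma:=\tilde V W$, viewed as the matrix function on $V$ gauging $\Phi$, is of the form (real scalar)$\times$(admissible diagonal)$\times$(quaternion). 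Here $\Phi$ is the frame of the original Lax pair \eqref{eq:Lax}, which is itself admissibly gauge equivalent to a unitary frame by Proposition \ref{prop:timandandrewlaxpair}.

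\textbf{Step 1: the transformed Lax pair.} The transformed Lax pair $(\hat{\tilde{\mathcal L}},\hat{\tilde{\mathcal M}})$ has again the form \eqref{eq:Lax}, with vertex function $\hat{\tilde s}$ and the same $t_1,t_2$. By Lemma \ref{lemma:unitary}, $\hat{\tilde s}$ is unitary. Reversing the gauge \eqref{eq:gauge-tim-andrew} of Proposition \ref{prop:timandandrewlaxpair} (built from $\hat{\tilde s}$) therefore yields a connection of the unitary form \eqref{eq:connection-CGC}, up to real scalars. It remains to check that $\ell,m$ computed from the new data satisfy the quaternionic constraints stated after \eqref{eq:Lax}. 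Consistency $\tilde t_i=t_i$ together with unitarity of $\hat{\tilde s}$ should give this, exactly as in Theorem 5 of \cite{HS}.

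\textbf{Step 2: quaternionicity of the frame.} Next I would check directly that $\hat{\tilde\Phi}(0,0)$ can be chosen so that, after the gauge, the frame is quaternionic at one vertex for all real $t$. Flatness then propagates quaternionicity along the unitary connection to all vertices. At the base vertex, write $\tilde V W$ explicitly using the matrices $W(\cdot;\alpha)$ and $\tilde V(\cdot;-\alpha)$:
\[
\tilde V W=\matrix{\cot\frac{\alpha}{2}\frac{\tilde s}{s}-\tan\frac{\alpha}{2}\hat{\tilde s}\tilde s}{ie^t-ie^{-t}\tan\frac{\alpha}{2}\cot\frac{\alpha}{2}\hat{\tilde s}s}{ie^t-ie^{-t}\frac{1}{\hat{\tilde s}s}}{\cot\frac{\alpha}{2}\frac{s}{\tilde s}-\tan\frac{\alpha}{2}\frac{1}{\hat{\tilde s}\tilde s}}.
\]
The off-diagonal entries are already of the form in \eqref{eq:Lax}. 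For the diagonal entries, substitute $\hat{\tilde s}$ from \eqref{eq:permutability} and use $\tan\frac{\alpha}{2}\cot\frac{\alpha}{2}=1$.

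\textbf{Step 3: the two cases of Condition $\mathcal C$.} When $|\sin\alpha|\le1$, everything is unitary and the claim is immediate from Lemma \ref{lemma:stildeshat}. When $|\sin\alpha|>1$, $\tan^2\frac{\alpha}{2}$ is unitary and $\tilde s=\overline{\hat s}$. Using $\hat W V=\tilde V W$, the $(2,2)$ entry should become a real multiple times a unitary factor times the conjugate of the $(1,1)$ entry. The diagonal entries then differ only by an admissible diagonal factor $\exp(x+iy\boldsymbol\sigma_3)$, which is removed via Lemma \ref{lemma:gauge-invariance}.

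I expect Step 3, the $|\sin\alpha|>1$ case, to be the main obstacle. There the individual gauges $W,V$ are not quaternionic, and one must exhibit the cancellation of non-unitary parts using $\tilde s=\overline{\hat s}$, $|\tan\frac{\alpha}{2}|=1$ and \eqref{eq:permutability}. I would first try writing $\tan\frac{\alpha}{2}=e^{i\phi}$ and verifying the identity $\overline{(\Gamma)_{11}}=\lambda(\Gamma)_{22}$ with $\lambda>0$ symbolically.
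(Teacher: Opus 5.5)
Your strategy matches the paper's: show that the composite gauge $\Gamma=\tilde V(\cdot;-\alpha)W(\cdot;\alpha)$ is quaternion-valued for all real $t$ (trivial when $|\sin\alpha|\le 1$), so that $\hat{\tilde\Phi}=\Gamma\Phi$ is a quaternionic frame and the Sym formula lands in $\Rr^3\times\Ss^2$. However, the proposal stops exactly where the proof starts. For $|\sin\alpha|>1$ you only announce that the diagonal entries ``should'' match and that you ``would try'' to verify it, so the one nontrivial step is missing.

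Two further problems would derail that verification as you set it up.

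First, your diagonal entries have the wrong sign. Since $\tan\frac{\beta}{2}=-\tan\frac{\alpha}{2}$ and $(-i)(i)=1$, one gets
\[
\Gamma_{11}=\tilde s\Bigl(s^{-1}\cot\tfrac{\alpha}{2}+\hat{\tilde s}\tan\tfrac{\alpha}{2}\Bigr),\qquad \Gamma_{22}=\tilde s^{-1}\Bigl(s\cot\tfrac{\alpha}{2}+\hat{\tilde s}^{-1}\tan\tfrac{\alpha}{2}\Bigr).
\]
Your off-diagonal entries are right. With your signs, the ratio $\overline{\Gamma_{11}}/\Gamma_{22}$ turns out to be real but generically different from $1$.

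Second, the fallback of absorbing a leftover real or unitary factor into an admissible gauge does not exist. An admissible gauge is a real multiple of the unit quaternion $e^{iy\boldsymbol{\sigma}_3}$, and $\Hh$ is closed under real scaling. So ``(real scalar)$\times$(admissible)$\times$(quaternion)'' is just ``quaternion''. Since the off-diagonal entries of $\Gamma$ already have exact quaternionic form, you need precisely $\Gamma_{22}=\overline{\Gamma_{11}}$, with no slack.

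The missing idea is short. Solve \eqref{eq:permutability} for $\hat s\tilde s$, and note that $\hat s\tilde s=|\tilde s|^2$ because $\hat s=\overline{\tilde s}$ at every vertex (the induction in the proof of Lemma \ref{lemma:unitary}). This gives
\[
|\tilde s|^2=\frac{\hat{\tilde s}\cot\frac{\alpha}{2}+s^{-1}\tan\frac{\alpha}{2}}{s^{-1}\cot\frac{\alpha}{2}+\hat{\tilde s}\tan\frac{\alpha}{2}}.
\]
Using $\overline{\tan\frac{\alpha}{2}}=\cot\frac{\alpha}{2}$ (valid because $|\tan\frac{\alpha}{2}|=1$) together with unitarity of $s$ and $\hat{\tilde s}$, this identity is exactly $\overline{\Gamma_{11}}=\Gamma_{22}$. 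Hence $\Gamma$ is a quaternion at every vertex for all real $t$, which is the paper's proof.

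This check is pointwise. So your Step 1, which you also only assert by analogy, is unnecessary, as is the propagation by flatness in Step 2.
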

\begin{proof}
	When $|\sin \alpha|>1$, we have $\tilde s= \overline{\hat s}$, and
	$$|\tilde{s}|^2=\frac{\hat{\tilde{s}} \cot \frac{\alpha}{2} + s \inv \tan \frac{\alpha}{2}}{ s \inv \cot \frac{\alpha}{2} +  \hat{\tilde{s}} \tan \frac{\alpha}{2} }.$$
	Therefore,
	\begin{equation} \label{eq:VW}
	\tilde{V}(j,k;-\alpha)W(j,k;\alpha)=
	\matrix{\tilde{s}( s \inv \cot \frac{\alpha}{2} +  \hat{\tilde{s}} \tan \frac{\alpha}{2} )}
     {i(e^t - e^{-t} s \hat{\tilde{s}})}
     {i(e^t - e^{-t} (s \hat{\tilde{s}})\inv)}
     {\tilde{s}\inv( s \cot \frac{\alpha}{2} +  \hat{\tilde{s}} \inv \tan \frac{\alpha}{2} )}
	\end{equation}
  is a quaternion for all $t$, and $(\hat{\tilde \x},\hat{\tilde \n})$ lies in $\Rr^3 \times \Ss^2$.
\end{proof}

\begin{remark}
In Lemma 7 of \cite{HS}, it was shown that if $\beta=-\alpha$, then $\x$ and $\hat{\tilde \x}$ generate 3D consistent cubes with circular quads on all sides.
\end{remark}

\subsection{Periodic double B\"acklund transforms for circular nets of revolution}
With the conditions as in Lemma \ref{lemma:unitary}, Equation \eqref{eq:VW} shows that the double B\"acklund transforms will be periodic when 
$s$, $\tilde s$, $\hat s$ and $\tilde{\hat s}=\hat{\tilde s}$ are.
This is the content of the following theorem.

\begin{theorem}
 For a $K=-1$ rc-net $(\x,\n)$ with $k$-invariant data satisfying the closing condition \eqref{eq:close}, 
 let $(\hat{\tilde{\x}},\hat{\tilde{\n}})$ be a double B\"acklund transform formed from Condition $\mathcal{C}$ and the conditions in Lemma \ref{lemma:unitary}.
 If either $B$ in \eqref{eq:riccatiWW} satisfies \eqref{eq:period1} or $D$ in \eqref{eq:riccatiVV} satisfies \eqref{eq:period2},
 then $(\hat{\tilde \x},\hat{\tilde \n})$ is periodic with period $\mathrm{LCM}(k_0,N_0)$.
\end{theorem}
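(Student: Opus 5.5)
The plan is to reduce periodicity of the double B\"acklund transform to periodicity of the four vertex functions $s$, $\tilde s$, $\hat s$, $\hat{\tilde s}$ under $k\mapsto k+N_0$, and then combine this with the closing condition \eqref{eq:close} exactly as in the proof of Theorem \ref{theorem:periodic}. The frame of $(\hat{\tilde\x},\hat{\tilde\n})$ is $\hat{\tilde\Phi}=\tilde V W\Phi$. By \eqref{eq:VW}, under Condition $\mathcal{C}$ and the hypotheses of Lemma \ref{lemma:unitary}, the gauge $\tilde V(\cdot;-\alpha)W(\cdot;\alpha)$ depends on $(j,k)$ only through $s$, $\tilde s$ and $\hat{\tilde s}$, with $\alpha$ constant. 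So once these are $N_0$-periodic in $k$, the gauge is $N_0$-periodic in $k$.

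First, $s(j)=v_{(j,1)(j,0)}$ is $k$-invariant, so it is trivially periodic. Next, suppose $B$ satisfies \eqref{eq:period1}. Since $B$ is $k$-invariant, $\tilde s(j,k+N_0)=B^{N_0}\bullet\tilde s(j,k)=\tilde s(j,k)$, because a scalar matrix acts as the identity M\"obius map. This holds whether or not $\tilde s$ is unitary, since it is a purely algebraic identity. The remaining point is to get periodicity of $\hat s$ from that of $\tilde s$, since only one of \eqref{eq:period1}, \eqref{eq:period2} is assumed. I would handle the two cases of Lemma \ref{lemma:unitary} separately.

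\textbf{Case $|\sin\alpha|>1$.} We have $\tilde s=\overline{\hat s}$ everywhere, so $\hat s$ is periodic exactly when $\tilde s$ is. Alternatively, $D=-\boldsymbol{\sigma}_1B^{t}\boldsymbol{\sigma}_1$ under $\beta=-\alpha$, by the same computation as $C=-\boldsymbol{\sigma}_1A^t\boldsymbol{\sigma}_1$ in Lemma \ref{lemma:unitary}. Then $D^{N_0}=(-1)^{N_0}\boldsymbol{\sigma}_1(B^{N_0})^t\boldsymbol{\sigma}_1$ is scalar exactly when $B^{N_0}$ is, so \eqref{eq:period1} and \eqref{eq:period2} are equivalent.

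\textbf{Case $|\sin\alpha|\le1$.} Here $\alpha$ is real, and I expect the same algebraic relation to persist: comparing \eqref{eq:B} and \eqref{eq:D} with $\beta=-\alpha$ should give $D=-\boldsymbol{\sigma}_1B^t\boldsymbol{\sigma}_1$ up to a scalar factor, so again each of \eqref{eq:period1}, \eqref{eq:period2} implies the other. Verifying this entrywise identity between $B$ and $D$ is the step I expect to need the most care. It uses $\sin(-\alpha)=-\sin\alpha$, $\cos(-\alpha)=\cos\alpha$, and the relation between $m$ and $m^{-1}$, $v$ and $v^{-1}$ in the diagonal entries. If it failed, I would fall back on the symmetric hypothesis and note that the permutability formula only needs $\hat s\tilde s$ periodic.

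With $s,\tilde s,\hat s$ all $N_0$-periodic, the permutability formula \eqref{eq:permutability} expresses $\hat{\tilde s}$ rationally in them with the constant $\tan^2\frac{\alpha}{2}$, so $\hat{\tilde s}$ is $N_0$-periodic. Hence $\tilde V W$ is $N_0$-periodic in $k$. Finally, $\Phi(j,k)=P(j)D^kP(0)^{-1}\Phi(0,0)$ from \eqref{eq:solution}, with $\mathcal{M}^{k_0}$ scalar at $t=0$ by \eqref{eq:close}. So at $k\mapsto k+\mathrm{LCM}(k_0,N_0)$ the frame $\hat{\tilde\Phi}=\tilde VW\Phi$ changes by right multiplication by $R^{\mathrm{LCM}}$, up to a scalar factor, whose $t$-derivative contributes nothing at $t=0$ since $\partial_t\det\mathcal{M}|_{t=0}=0$. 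By \eqref{eq:isom} and the Sym formula \eqref{eq:sym-bob}, $(\hat{\tilde\x},\hat{\tilde\n})$ is therefore invariant under this shift, giving period $\mathrm{LCM}(k_0,N_0)$.
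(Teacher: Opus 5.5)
Your proposal is correct and follows the paper's proof. Condition \eqref{eq:period1} makes $\tilde s$ $N_0$-periodic, the relation $D=-\boldsymbol{\sigma}_1B^{t}\boldsymbol{\sigma}_1$ forced by $\beta=-\alpha$ transfers this to $\hat s$, \eqref{eq:permutability} then gives periodicity of $\hat{\tilde s}$, and \eqref{eq:close} with the Sym formula finishes. The entrywise identity you flagged as delicate holds exactly, with no scalar factor, for every $\alpha$ (real or complex) once $\beta=-\alpha$, so your case split according to $|\sin\alpha|$ and the fallback are unnecessary.
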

\begin{proof}
	We take the case that $B$ satisfies \eqref{eq:period1}.
	Equations \eqref{eq:riccatiW} and \eqref{eq:period1} imply $\tilde s$ is $N_0$-periodic.
	Like with the argument in the proof of Lemma \ref{lemma:unitary}, Condition $\mathcal{C}$ implies Equation \eqref{eq:period2} also holds, so $\hat s$ is also $N_0$-periodic. 
	Consequently, $\tilde{\hat s}=\hat{\tilde s}$ is $N_0$-periodic by Equation \eqref{eq:permutability}. Equation \eqref{eq:close} together with the Sym formula gives the result.
	The case that $D$ satisfies \eqref{eq:period2} is similar.
\end{proof}

\begin{figure}[htbp] 
  \setlength{\tabcolsep}{0pt}
  \renewcommand{\arraystretch}{0}
  \begin{tabular}{ccc}
    \includegraphics[width=5.0cm, trim=4.5cm 1cm 4.5cm 1cm, clip]{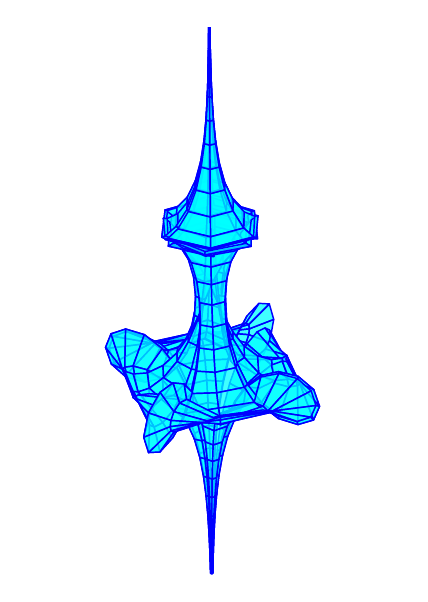} &
    \includegraphics[width=5.0cm, trim=4.5cm 1cm 4.5cm 1cm, clip]{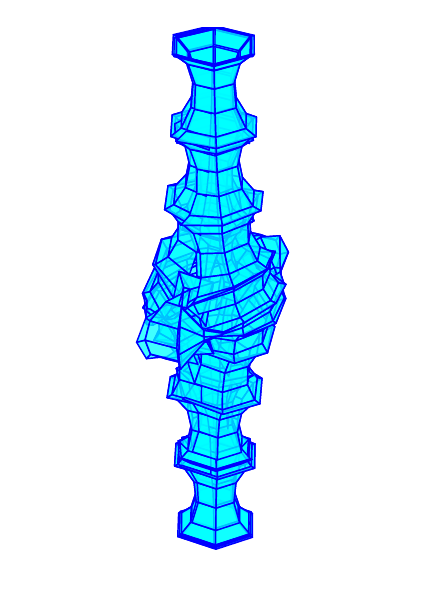} &  
    \includegraphics[width=5.0cm, trim=4.5cm 1cm 4.5cm 1cm, clip]{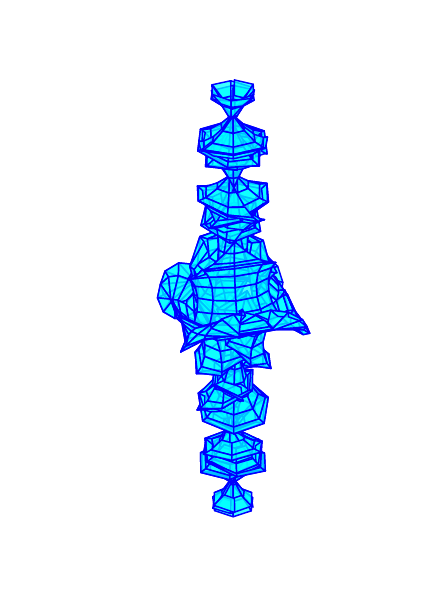}      \\
    \includegraphics[width=5.0cm, trim=4.5cm 1cm 4.5cm 1cm, clip]{DoubleBacklundpseudosphereperiod24paper.pdf} &
    \includegraphics[width=5.0cm, trim=4.5cm 1cm 4.5cm 1cm, clip]{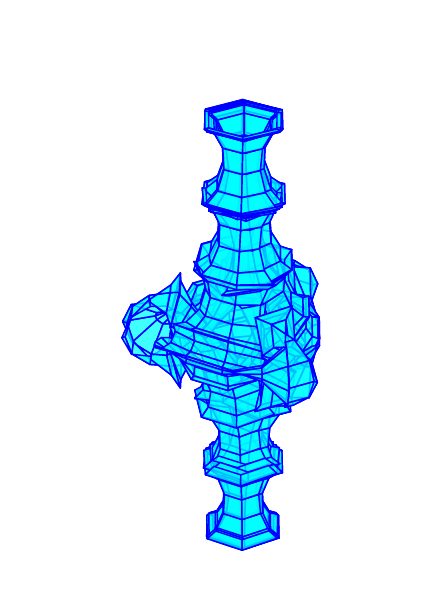} &    
    \includegraphics[width=5.0cm, trim=4.5cm 1cm 4.5cm 1cm, clip]{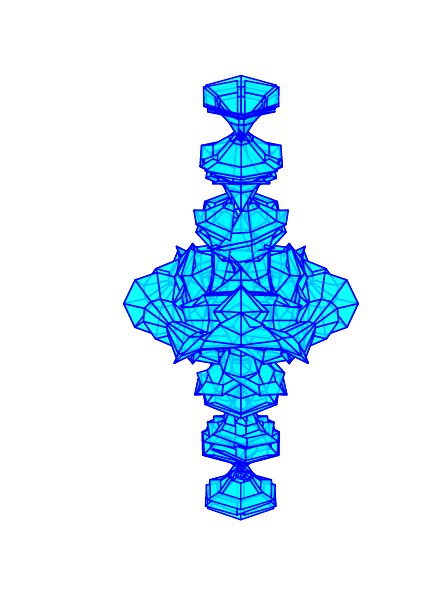}      
  \end{tabular}
  \caption{Upper row (left–right): the double B\"acklund transforms for rc-nets for $\kappa=1$, $0<\kappa<1$, $\kappa>1$, which are periodic with $k_0=6$, $N_0=9$.
	Lower row (left–right): the double B\"acklund transforms for rc-nets for $\kappa=1$, $0<\kappa<1$, $\kappa>1$, periodic with $k_0=6$, $N_0=8$.}
  \label{figure:backlund}
\end{figure}

\subsection{Linearization of the rational difference equations for $\tilde s$ and $\hat s$}
Finally, we describe linearization of the rational difference equations \eqref{eq:riccatiWW} and \eqref{eq:riccatiVV} 
when $B$, $D$ are $k$-invariant. 
As the two cases are similar, we only discuss Equation \eqref{eq:riccatiWW}, using a well known method for linearization of  
rational difference equations given one particular solution $\zeta$. Then 
$$\tilde S=\frac{1}{\tilde s-\zeta}$$
satisfies 
\begin{subequations} \label{eq:linear}
\begin{equation} \label{eq:linear1}
\tilde S(j+1,k)=\frac{(A_{21}\zeta(j,k)+A_{22})^2}{\det A_{(j+1,k)(j,k)}}\tilde S(j,k) +\frac{A_{21}(A_{21}\zeta(j,k)+A_{22})}{\det A_{(j+1,k)(j,k)}},
\end{equation}
\begin{equation} \label{eq:linear2}
\tilde S(j,k+1)=\frac{(B_{21}\zeta(j,k)+B_{22})^2}{\det B_{(j,k+1)(j,k)}}\tilde S(j,k)+\frac{B_{21}(B_{21}\zeta(j,k)+B_{22})}{\det B_{(j,k+1)(j,k)}}
\end{equation}
\end{subequations}
where $A_{(j+1,k)(j,k)}=(A_{mn})_{m,n=1,2}$, $B_{(j,k+1)(j,k)}=(B_{mn})_{m,n=1,2}$.
When $B$ is $k$-invariant, we can obtain the fixed points of \eqref{eq:B} in the $k$-direction
for each $j$, which can be regarded as particular solutions $\zeta$ that can appear as in the following lemma.

\begin{lemma} \label{lemma:zeta}
For the rational difference equations \eqref{eq:riccatiWW} with $k$-invariant $A$ and $B$, 
if $\vector{\zeta(j)}{1}$ is a $k$-invariant eigenvector of $B_{(j,k+1)(j,k)}$ for some $j$, then 
$\vector{\zeta(j+1)}{1}$ is a $k$-invariant eigenvector of $B_{(j+1,k+1)(j+1,k)}$, where 
$\zeta(j+1)=A_{(j+1,k)(j,k)}\bullet \zeta(j)$.
\end{lemma}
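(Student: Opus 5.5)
The key input is the compatibility of the two Möbius recursions in \eqref{eq:riccatiW}. The gauge relations for $W$ force $\tilde s(j+1,k+1)$ to be the same whether we go first in $j$ and then in $k$, or the other way round. This holds for every initial value $\tilde s(0,0)$, that is, for every solution of \eqref{eq:riccatiWW}. Hence the Möbius maps agree:
\[
B_{(j+1,k+1)(j+1,k)}\,A_{(j+1,k)(j,k)} = \lambda\, A_{(j+1,k+1)(j,k+1)}\,B_{(j,k+1)(j,k)}
\]
as matrices, for some scalar $\lambda=\lambda(j,k)$. Indeed, two Möbius transformations that agree on a whole open set of initial values (or on three points) coincide as elements of $PGL_2(\Cc)$. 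Using $k$-invariance of $A$ and $B$, and writing $A_j$ and $B_j$ for the $k$-independent matrices, this becomes
\[
B_{j+1}A_j=\lambda A_jB_j.
\]

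First I would justify this projective zero-curvature relation. The cleanest way is to note that $(A,B)$ is a discrete connection acting on $\Cc P^1$ whose parallel sections are exactly the solutions $\tilde s$. Uniqueness of the dressing $W$ (Theorem 8 of \cite{HS}) gives a well-defined $\tilde s(j+1,k+1)$ for arbitrary initial data. If one prefers to avoid this, one can verify the relation by direct computation from \eqref{eq:A} and \eqref{eq:B}, using the compatibility conditions \eqref{eq:compatibilitycgc} in the $k$-invariant form \eqref{eq:compa3}. I expect this step to be the main obstacle if it is done by hand, so I would rely on the general argument.

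Next, let $\binom{\zeta(j)}{1}$ be an eigenvector of $B_j$ with eigenvalue $\mu$. Then
\[
B_{j+1}A_j\binom{\zeta(j)}{1}=\lambda A_j B_j\binom{\zeta(j)}{1}=\lambda\mu\, A_j\binom{\zeta(j)}{1}.
\]
So $A_j\binom{\zeta(j)}{1}$ is an eigenvector of $B_{j+1}$, provided it is nonzero; this holds because $A_j$ is invertible, as $\det A\neq 0$ is needed for \eqref{eq:riccatiW} to define a Möbius map.

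Finally, normalize. Write
\[
A_j\binom{\zeta(j)}{1}=\bigl(A_{21}\zeta(j)+A_{22}\bigr)\binom{A_j\bullet\zeta(j)}{1},
\]
which is valid when $A_{21}\zeta(j)+A_{22}\neq 0$. This is exactly the nondegeneracy already implicit in \eqref{eq:linear1}. In that case $\binom{\zeta(j+1)}{1}$ with $\zeta(j+1)=A_{(j+1,k)(j,k)}\bullet\zeta(j)$ is an eigenvector of $B_{(j+1,k+1)(j+1,k)}$. It is $k$-invariant because $A$ and $\zeta(j)$ are. In the degenerate case the eigenvector is $\binom{1}{0}$, which corresponds to $\zeta(j+1)=\infty$. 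I would note this as the convention, with the Möbius action understood on $\Cc\cup\{\infty\}$.
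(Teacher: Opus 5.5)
Your proposal is correct and follows the paper's proof. You obtain $B_{(j+1,k+1)(j+1,k)}A_{(j+1,k)(j,k)}=A_{(j+1,k)(j,k)}B_{(j,k+1)(j,k)}$ from compatibility plus $k$-invariance, apply it to $\vector{\zeta(j)}{1}$, and normalize to get $\vector{\zeta(j+1)}{1}$; your version is slightly more careful than the paper's, since you allow a projective scalar $\lambda$ in that relation and flag the degenerate case $A_{21}\zeta(j)+A_{22}=0$, both of which the paper leaves implicit.
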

\begin{proof}
	By the compatibility condition and $k$-invariance, we have 
	$$B_{(j+1,k+1)(j+1,k)}A_{(j+1,k)(j,k)}=A_{(j+1,k)(j,k)}B_{(j,k+1)(j,k)}.$$
	Inserting $\vector{\zeta(j)}{1}$ into this, we have 
	$B_{(j+1,k+1)(j+1,k)}\vector{\zeta(j+1)}{1}\parallel \vector{\zeta(j+1)}{1}$.
	Because $A$ is $k$-invariant, $\zeta(j+1)$ is as well.
\end{proof}

We now arrive at the linearization:

\begin{theorem}
For the rational difference equations \eqref{eq:riccatiWW} with $k$-invariant $A$ and $B$ and 
$k$-invariant solution $\zeta(j)$, then
$\tilde s= \frac{1}{\tilde S}+\zeta$ with $\tilde S$ as in the linear equation \eqref{eq:linear} is another solution of \eqref{eq:riccatiWW}.
The analogous result holds for Equations \eqref{eq:riccatiVV}.
\end{theorem}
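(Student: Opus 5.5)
The argument is the standard Riccati-to-linear substitution, carried out separately along each lattice direction. The compatibility of the two resulting linear equations is then inherited from the compatibility of the two rational equations.

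\textbf{Setup.} Set $\tilde s=\zeta+1/\tilde S$ and require $\tilde s$ to satisfy \eqref{eq:riccatiW}. Fix an edge and write $A=(A_{mn})$.
\begin{itemize}
\item Since $\zeta$ solves the $j$-equation, $\zeta(j+1)=A\bullet\zeta(j)$.
\item Using the determinant identity for Möbius maps,
\[
A\bullet z-A\bullet w=\frac{\det A\,(z-w)}{(A_{21}z+A_{22})(A_{21}w+A_{22})}.
\]
\item Put $z=\zeta+1/\tilde S$ and $w=\zeta$, and take reciprocals:
\[
\frac{1}{A\bullet z-A\bullet\zeta}=\frac{\tilde S\,(A_{21}\zeta+A_{22})\bigl(A_{21}\zeta+A_{22}+A_{21}/\tilde S\bigr)}{\det A}.
\]
\item Expanding gives exactly the right-hand side of \eqref{eq:linear1}.
\end{itemize}
Hence $\tilde s(j+1,k)=A\bullet\tilde s(j,k)$ holds if and only if $\tilde S(j+1,k)$ is given by \eqref{eq:linear1}.

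\textbf{The $k$-direction.} The identical computation with $B$ in place of $A$ gives \eqref{eq:linear2}. The one additional input is that $\zeta$ also satisfies the $k$-equation, $\zeta(j)=B_{(j,k+1)(j,k)}\bullet\zeta(j)$. This holds because $\zeta$ is $k$-invariant and is a fixed point of $B$, as in Lemma \ref{lemma:zeta}. So $\zeta$ is a genuine particular solution of the full system \eqref{eq:riccatiWW}, which is what the hypothesis that $\zeta$ is a solution means.

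\textbf{Consistency of the linear system.} This is the step I expect to need the most care. One must check that \eqref{eq:linear1} and \eqref{eq:linear2} can be solved simultaneously on $\Zz^2$, i.e.\ that they are compatible around each face. The approach is to avoid checking this directly and instead transport compatibility from the rational system.
\begin{itemize}
\item The rational system \eqref{eq:riccatiWW} is compatible, because $A$ and $B$ are induced by the gauge $W$ and satisfy $B_{(j+1,k+1)(j+1,k)}A_{(j+1,k)(j,k)}\propto A_{(j+1,k+1)(j,k+1)}B_{(j,k+1)(j,k)}$. By $k$-invariance this reduces to $B(j+1)A(j)\propto A(j)B(j)$, as in the proof of Lemma \ref{lemma:zeta}.
\item The affine maps in \eqref{eq:linear} are the conjugates of the Möbius maps $A\bullet$ and $B\bullet$ by the charts $z\mapsto 1/(z-\zeta(j))$. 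Conjugating a commuting square of Möbius maps by charts at the four corners again gives a commuting square.
\item Therefore any solution $\tilde S$ of \eqref{eq:linear} is well defined from an initial value $\tilde S(0,0)$.
\end{itemize}

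\textbf{Conclusion and caveats.} Then $\tilde s=1/\tilde S+\zeta$ solves \eqref{eq:riccatiWW} wherever $\tilde S\neq 0,\infty$. Two further points need attention: $\det A,\det B\ne0$, and $A_{21}\zeta+A_{22}\neq0$ where it appears. The case of \eqref{eq:riccatiVV} is identical, with $C$ and $D$ replacing $A$ and $B$.
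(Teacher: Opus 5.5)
Your proof is correct and is essentially the paper's argument. The paper simply invokes the standard linearization $\tilde S=1/(\tilde s-\zeta)$ of a rational (Riccati-type) difference equation around a particular solution, with the $k$-invariant $\zeta$ supplied by fixed points of $B$ propagated via Lemma \ref{lemma:zeta}; your M\"obius difference identity is exactly the computation behind \eqref{eq:linear}. Your additional consistency check of the linear system, obtained by conjugating the commuting M\"obius square through the charts $z\mapsto 1/(z-\zeta(j))$, is not carried out in the paper, which takes a solution $\tilde S$ of \eqref{eq:linear} as given, but the check is valid and shows that such solutions exist.
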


\bibliographystyle{amsplain}
\bibliography{reference}
\vspace{0.5cm}
\noindent Thomas Raujouan\\
Faculty of Science and Technology, University of Tours\\
raujouan@univ-tours.fr
\\

\noindent Wayne Rossman\\
Department of Mathematics, Graduate School of Science, Kobe University\\
wayne@math.kobe-u.ac.jp
\\

\noindent Naoya Suda\\
Department of Mathematics, Graduate School of Science, Kobe University\\
230s011s@stu.kobe-u.ac.jp

\end{document}